\definecolor{NBrown}{HTML}{66220C}
\definecolor{NAqua}{HTML}{00698C}
\definecolor{ForestGreen}{HTML}{228b22}
\newtheorem{theorem}{Theorem}[section]
\newtheorem{lemma}[theorem]{Lemma}
\newtheorem{proposition}[theorem]{Proposition}
\newtheorem{corollary}[theorem]{Corollary}
\newtheorem{remark}[theorem]{Remark}
\newtheorem{definition}[theorem]{Definition}
\newtheorem{conjecture}[theorem]{Conjecture}
\renewcommand\@dotsep{10000}
\DeclareMathOperator{\diam}{Diam}
\renewcommand{\epsilon}{\varepsilon}
\newcommand{\pr}[1]{\mathbb{P}\!\left(#1\right)}
\newcommand{\E}[1]{\mathbb{E}\!\left[#1\right]}
\newcommand{\Pb}{\mathbb{P}}
\newcommand{\Pcal}{\mathcal{P}}
\newcommand{\indic}[1]{\mathbbm{1}_{\left\{#1\right\}}}
\newcommand{\R}{\mathbb{R}}
\newcommand{\ZZ}{\mathbb{Z}}
\newcommand{\N}{\mathbb{N}}
\newcommand{\I}{\mathcal{I}}
\newcommand{\T}{\mathcal{T}}
\newcommand{\W}{\mathcal{W}}
\newcommand{\J}{\hat{J}}
\newcommand{\rhoo}{\partial}
\newcommand{\Volt}{\upsilon}
\newcommand{\zetas}{\zeta^{[s]}}
\newcommand{\Wts}{\widehat{W}^{[s]}}
\newcommand{\Ws}{{W}^{[s]}}
\newcommand{\Wt}{\widehat{W}}
\newcommand{\Noo}{\N^{(1)}}
\newcommand{\No}{N^{(1)}}
\newcommand{\Ns}{N^{(s)}}
\def\X{X^{\text{exc}}}
\newcommand{\tf}{\mathfrak{t}}
\newcommand{\q}{\mathfrak{q}}
\def\Ta{\T_{\alpha}}
\newcommand{\Tn}{T_n^{(\alpha)}}
\newcommand{\Mn}{M_n^{(\alpha)}}
\newcommand{\dmn}{d_n}
\newcommand{\dt}{d_{\alpha}}
\def\Ma{\mathbf{m}_{\alpha}}
\newcommand{\pof}{\mu_{\alpha}}
\newcommand{\Da}{D_{\alpha}}
\newcommand{\M}{M}
\newcommand{\rootm}{\rho}
\newcommand{\Ito}{It\^o }
\def\Levy{L\'evy }
\def\BGW{Bienaym\'e--Galton--Watson }
\def\cadlag{c\`{a}dl\`{a}g }
\def\diam{\textsf{Diam}}
\newcommand{\thub}{t_{\text{hub}}}
\begin{document}

\title{Stable quadrangulations and stable spheres}
\author{{Eleanor Archer}\footnote{Modal'X, UMR CNRS 9023, UPL, Univ. Paris-Nanterre, F92000 Nanterre, France.\\ \phantom{aaa} \href{mailto:earcher@parisnanterre.fr}{\texttt{earcher@parisnanterre.fr}}, \href{mailto:laurent.menard@normalesup.org}{\texttt{laurent.menard@normalesup.org}}}, {Ariane Carrance}\footnote{CMAP, UMR CNRS 7641, \'Ecole polytechnique, F9112 Palaiseau, France.\\ \phantom{aaaa}\href{mailto:ariane.carrance@math.cnrs.fr}{\texttt{ariane.carrance@math.cnrs.fr}}},\, and {Laurent Ménard}\footnotemark[1]}

\date{\today}

\maketitle


\begin{abstract}
We consider scaling limits of random quadrangulations obtained by applying the Cori--Vauquelin--Schaeffer bijection to \BGW trees with stably-decaying offspring tails with an exponent $\alpha \in (1,2)$. We show that these quadrangulations admit subsequential scaling limits which all have Hausdorff dimension $\frac{2\alpha}{\alpha-1}$ almost surely. We conjecture that the limits are unique and spherical, and we introduce a candidate for the limit that we call the $\alpha$-\textit{stable sphere}. In addition, we conduct a detailed study of volume fluctuations around typical points in the limiting maps, and show that the fluctuations share similar characteristics with those of stable trees.


\end{abstract}

\begin{figure}[h!]
\centering
 \subfloat{\quad\quad
      \includegraphics[width=0.27\textwidth,page=1]{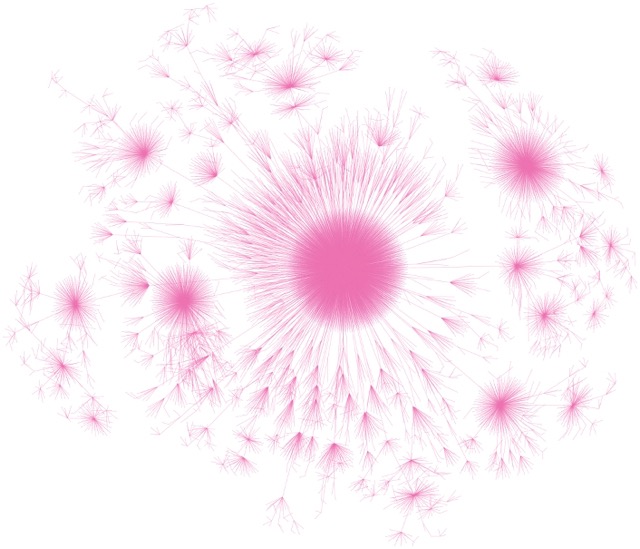}
      \label{subfig:treesim1}\quad\quad} 
    \subfloat{\quad\quad
      \includegraphics[width=0.35\textwidth,page=2]{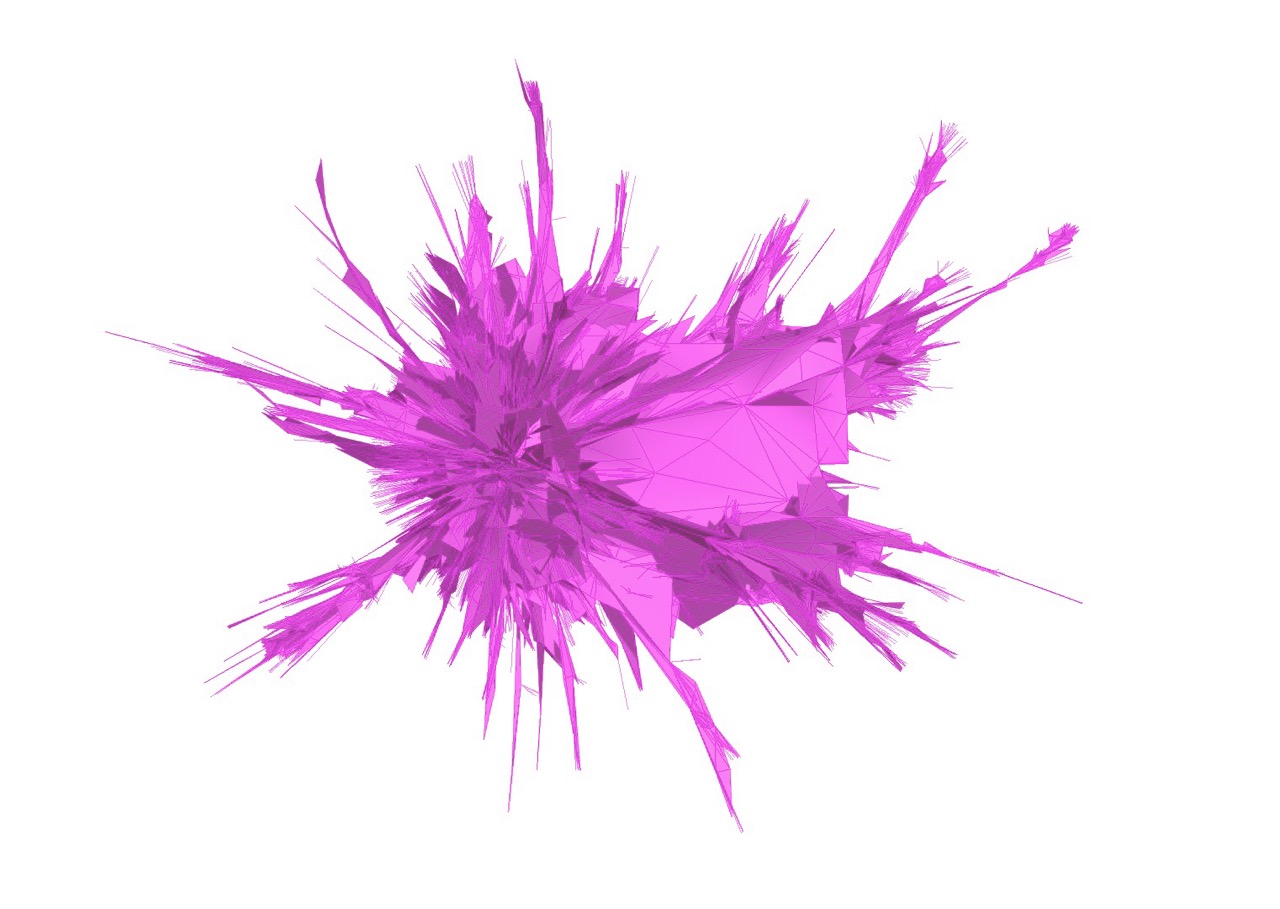}
      \label{subfig:quadsim1}\quad\quad}
    \caption{A stable tree with index $\alpha = 1.2$ and 12699  edges and its associated quadrangulation.}
  \subfloat{\quad\quad
      \includegraphics[width=0.27\textwidth,page=1]{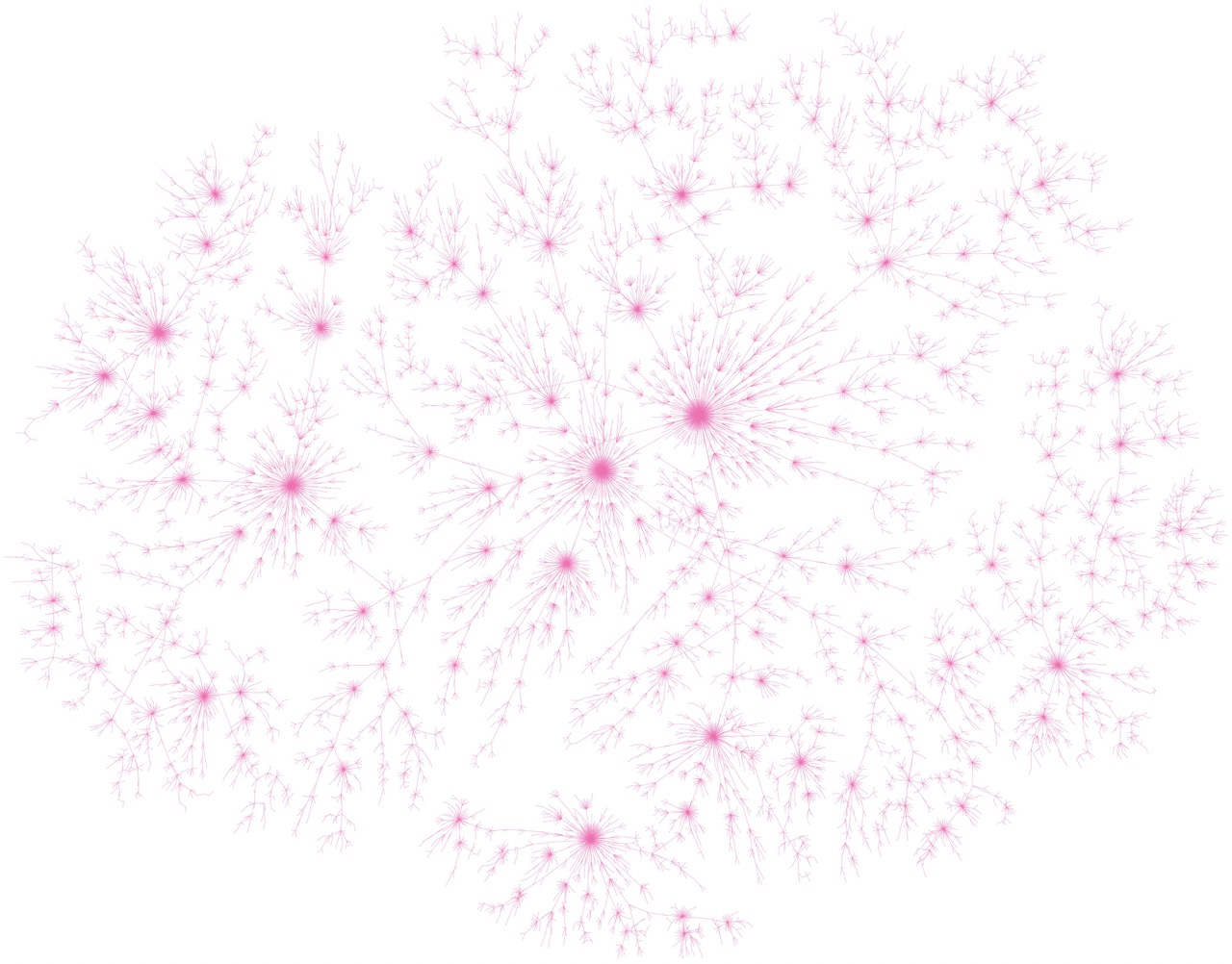}
      \label{subfig:treesim2}\quad\quad} 
    \subfloat{\quad\quad
      \includegraphics[width=0.35\textwidth,page=2]{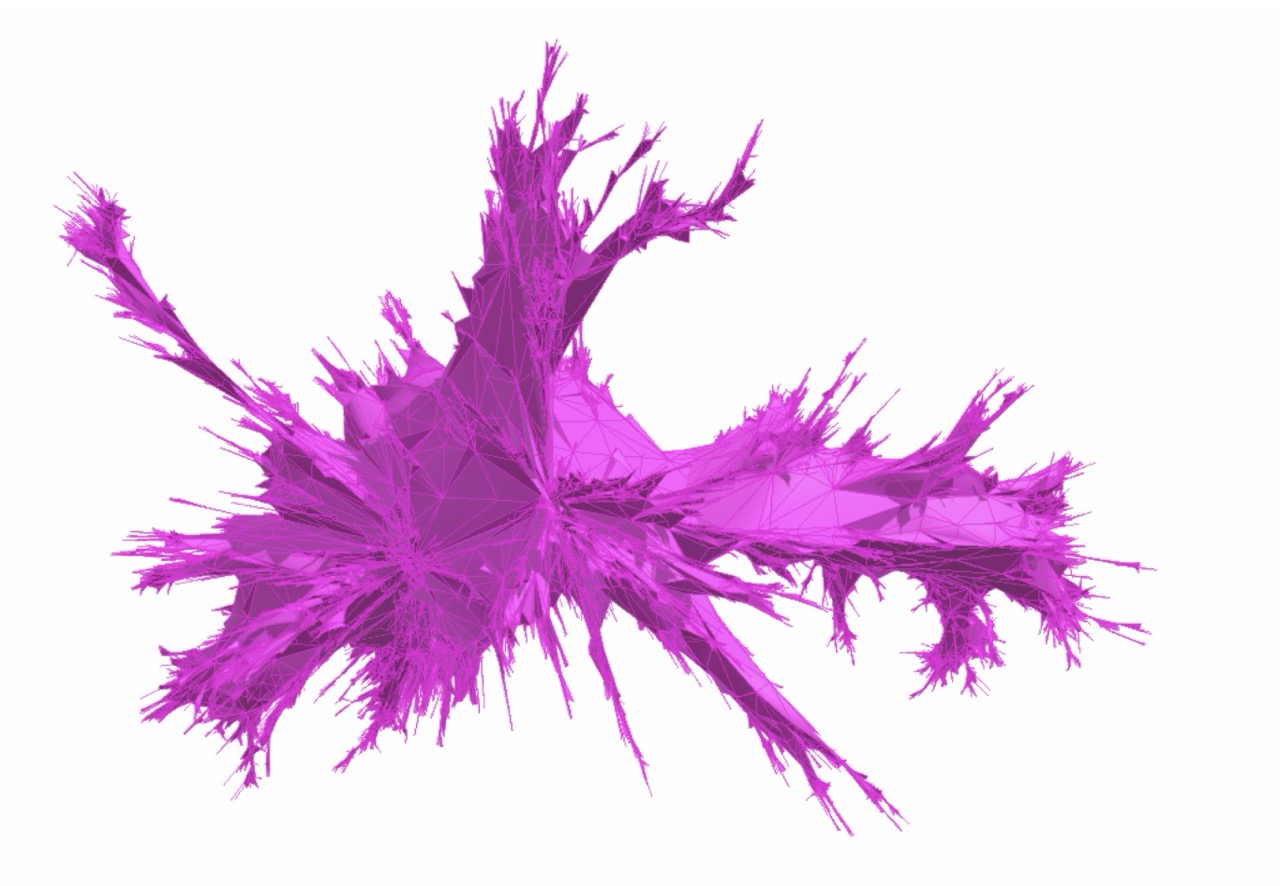}
      \label{subfig:quadsim2}\quad\quad}
          \caption{A stable tree with index $\alpha = 1.6$ and 22185  edges and its associated quadrangulation. }
    \label{fig:Simulation}
\end{figure}

\tableofcontents


\section{Introduction}

The purpose of this paper is to introduce a stable model of random planar quadrangulations and to start to investigate their scaling limits. The starting point is the celebrated convergence of uniform planar quadrangulations to the Brownian sphere  \cite{le2013uniqueness, miermont2013brownian}, which utilises the well-known Cori--Vauquelin--Schaeffer (CVS) bijection between labelled trees and planar quadrangulations. In particular, a quadrangulation can be represented as the image of a labelled tree under the CVS bijection. Each edge of the tree becomes a face of the quadrangulation and the labels determine distances in the latter object. In this paper, we consider sequences of random quadrangulations obtained by applying the CVS bijection to labelled Bienaym\'e--Galton--Watson trees for which the offspring distribution has stably-decaying tails.

The Brownian sphere was first constructed in \cite{marckert2006limit} as a candidate for the scaling limit of uniform quadrangulations with $n$ faces, along with various associated convergence results. This followed on from the work of Chassaing and Schaeffer who already showed that the rescaled radius of a uniform quadrangulation converges to the width of the support of the Brownian snake \cite{chassaing2004random}. The breakthrough in terms of its metric structure came in the landmark works of Le Gall \cite{LeGall2007BrownianMapTopological,legall-scalinglim-geod,le2013uniqueness} and Miermont \cite{miermont2013brownian} who showed that the Brownian sphere is the unique scaling limit, in the Gromov-Hausdorff sense, of uniform $2p$-angulations, and triangulations, with $n$ faces. Moreover, it is almost surely homeomorphic to the sphere \cite{LeGall2007BrownianMapTopological,le2008scaling,miermont2008sphericity}, and its Hausdorff dimension is equal to 4 almost surely \cite{LeGall2007BrownianMapTopological}. Since these pioneering results, the universality class of random models of planar maps whose scaling limit is the Brownian sphere has been extended to general maps~\cite{bettinelli-jacob-miermont}, bipartite maps~\cite{abraham-bipartite}, simple triangulations and quadrangulations~\cite{aba-simple}, Boltzmann maps and maps with prescribed degree sequences in the Brownian regime~\cite{marzouk-prescribed-degrees-2022}, all odd $p$-angulations~\cite{aba-odd}, bicolored triangulations~\cite{carrance-trig}, cubic planar graphs~\cite{stufler, afl-cubic} and block-weighted quadrangulations~\cite{fleurat-salvy}.

The crucial ingredient in the proofs of both Le Gall and Miermont were bijections between labelled trees and random planar maps obtained earlier by Cori-Vauqielin \cite{cori1981planar}, Schaeffer \cite{schaeffer1998conjugaison} and generalised by Bouttier, Di Francesco and Guitter \cite{BDG}. In particular, a uniform quadrangulation with $n$ faces can be sampled by taking a uniform rooted random plane tree with $n$ edges, sampling a so-called label function that evolves as a simple random walk along each branch of the tree, and applying the CVS bijection to this rooted labelled tree. It should therefore be of no surprise that the Brownian sphere can be constructed from scaling limits of such labelled trees. The scaling limit of such a label process is known as the Brownian snake, which is informally Brownian motion indexed by the Brownian continuum random tree (CRT) \cite{LeGallBook}. The Brownian sphere can be constructed as a quotient space of the CRT, and the precise form of the quotient is determined by the associated Brownian snake. Moreover, the CRT plays the role of the cut-locus for the Brownian sphere. 

The fact that the label process converges to the Brownian snake is therefore completely key to the scaling limit results of Le Gall and Miermont. Thanks to recent results of Marzouk \cite{marzouk-brownianstable}, we also now understand how to take scaling limits of labelled trees in a much more general setting. In particular, he considers labelled critical \BGW trees with offspring distribution in the domain of attraction of an $\alpha$-stable law for some $\alpha \in (1,2)$. The metric space scaling limits of such trees (without labels) are well-known to be the \textit{stable trees} introduced by Le Gall and Le Jan \cite{le1998branching} and developed by Duquesne and Le Gall \cite{LeGDuqMono} -- see Figure~\ref{fig:Simulation} for simulations. Like the CRT, these are continuous random trees but unlike the CRT they have the property that they contain branch points of infinite degree, known as hubs. In \cite{marzouk-brownianstable} Marzouk shows that these discrete trees can be additionally endowed with a label process that converges to ``Brownian motion indexed by the stable tree". This will be referred to as the \textit{stable snake} throughout this article.

This therefore begs the question of whether it is possible to emulate the construction of the Brownian sphere using a stable snake and to obtain convergence of the associated discrete model of random quadrangulations to this limiting object. The present paper is a natural step towards that aim. We consider convergence in the pointed Gromov-Hausdorff-Prokhorov topology (this is a straightforward extension of the Gromov-Hausdorff topology in which we also consider convergence of measures and roots). Just as in the Brownian case, the convergence of snakes of Marzouk ensures that we have tightness in this topology. The space of rooted compact finite metric measure spaces is Polish \cite{athreya2016gap}, hence this implies that these maps have subsequential limits, but we do not a priori know that limits are unique. However we conjecture that, similarly to the Brownian case, the limits are indeed unique and we moreover have a candidate for the limit, which is constructed from the stable snake analogously to the construction of the Brownian sphere from the Brownian snake (we plan to address this in future papers). We call this limit the \textit{stable sphere}. In the present paper, we show that any subsequential limit of the discrete quadrangulations has Hausdorff dimension equal to $\frac{2\alpha}{\alpha-1}$ almost surely, and obtain precise results on the corresponding volume fluctuations.

Let us now briefly describe our model. For $\alpha \in (1,2)$, we write $\Ta$ for the stable tree with index $\alpha$ as introduced by Le Gall and Le Jan \cite{le1998branching} and developed by Duquesne and Le Gall \cite{LeGDuqMono}; see Section~\ref{section:stable-trees} for details. The case $\alpha = 2$ corresponds to the CRT, introduced by Aldous \cite{AldousCRTI, AldousCRTII, AldousCRTIII}. We will denote by $\rho$ the root of $\Ta$ and by $\dt$ the distance on the tree $\Ta$. It will be useful to have in mind that $\Ta$ has both a genealogical and a lexicographical order, which is inherited from the left to right ordering of its branches in the plane.

The stable snake of Duquesne and Le Gall \cite{LeGDuqMono} allows us to assign Brownian labels to the vertices of $\Ta$ as follows : given $\Ta$, we consider a centred Gaussian process $(Z_v)_{v \in \Ta}$ such that $Z_\rho = 0$ and such that $\mathrm{Var} (Z_v - Z_{v'}) = \dt (v,v')$ for every $v,v' \in \T_\alpha$. We will review this process in detail in Section~\ref{section:stablesnake}. Similarly to the Brownian sphere, we can define a pseudo-distance $D^*_\alpha$ on $\T_\alpha$ from these labels. First, we define the following mapping
\begin{align}
\Da^\circ \, : \,  \Ta \times \Ta & \to  \mathbb R_+ \nonumber\\
 (v,v') & \mapsto  \Da^\circ (v,v') = Z_v + Z_{v'} - 2 \inf_{u \in I_{v,v'}} Z_u,
\label{eq:def-Do}
\end{align}
where $I_{v,v'}$ denotes the lexicographical interval between $v$ and $v'$ that minimises $\Da^\circ (v,v')$. Indeed, for any pair of vertices $v,v' \in \Ta$, there are always two lexicographical intervals between $v$ and $v'$ corresponding to the two possible ways to go from $v$ to $v'$ around the tree.

The function $\Da^{\circ}$ does not satisfy the triangle inequality, which leads us to introduce
\begin{equation}
\Da^* (v,v')  :=\inf\left\{\sum_{i=1}^k \Da^{\circ} (v_{i-1},v_i) \, \Big\lvert \, k\geq 1, \, v_i \in \Ta \, \forall i\in \{0,\dots,k\}, v_0=v, v_k=v'\right\},
\label{eq:def-D*}
\end{equation}
which is by construction the largest pseudo-distance that satisfies $\Da^* \leq \Da^{\circ}$.
We also let $\approx$ denote the equivalence relation on $\Ta$ defined by $v \approx v'$ if and only if $\Da^*(v,v')=0$ and denote by $\Ma$ the quotient space $\Ta / \approx$. We can also endow it with a measure $\nu_{\alpha}$ and a root $\rho_{\alpha}$, defined as the pushforwards of the canonical measure and root of $\Ta$ under the quotient operation. The \textbf{stable sphere} is the random compact rooted metric measure space $\left(\Ma ,\Da^*, \nu_{\alpha}, \rho_{\alpha}\right)$. Note that the special case $\alpha = 2$ is the Brownian sphere, and that we will focus on $\alpha <2$. Like in the Brownian setting, we expect the stable tree $\Ta$ to play the role of the cut-locus of the stable sphere.

Our discrete model of random quadrangulations can be defined as follows. Fix $\alpha \in (1,2)$ and let $\Tn$ be a critical Bienaymé-Galton-Watson tree with offspring distribution $\pof$ in the domain of attraction of an $\alpha$-stable law conditioned to have $n$ edges. For ease of notation we will in fact assume that there exists a constant $c \in (0, \infty)$ such that $\pof ([x, \infty)) \sim cx^{-\alpha}$ as $x \to \infty$ (we anticipate however that all of our results can be adapted to incorporate slowly-varying corrections). We add labels to the vertices of $\Tn$ by labelling the root $0$ and choosing i.i.d. centred increments in $\{-1,0,+1\}$ along edges. Denote by $\Mn$ the random rooted quadrangulation with $n$ faces associated to $\Tn$ by the Cori--Vauquelin--Schaeffer correspondence, $\dmn$ the graph distance on $\Mn$, $\nu_n$ the uniform probability measure on its vertices, and $\rho_n$ its root vertex. We conjecture the following convergence:

\begin{conjecture}
For $\alpha \in (1,2)$, there exists a constant $\kappa >0$ that depends on $\pof$, such that the following convergence in law holds in the pointed Gromov-Hausdorff-Prokhorov topology:
\[
\left(\Mn , n^{-\frac{\alpha -1}{2 \alpha}} \, \dmn, \nu_n, \rho_n \right) \underset{n \to \infty }{\rightarrow} \, \left(\Ma , \kappa  \Da^*, \nu_{\alpha}, \rho_{\alpha} \right).
\]
\end{conjecture}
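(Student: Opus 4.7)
The plan is to follow the two-step template pioneered by Le Gall \cite{le2013uniqueness} and Miermont \cite{miermont2013brownian} for the Brownian sphere. The first step is to establish tightness in the pointed Gromov--Hausdorff--Prokhorov topology together with an a priori upper bound on distances. Writing $\ell_n$ for the discrete label process on $\Tn$, Marzouk's convergence of the discrete label process to the stable snake \cite{marzouk-brownianstable} gives that the rescaled labelled trees converge to $(\Ta, Z)$ jointly with their contour and height functions. The Cori--Vauquelin--Schaeffer bijection yields the deterministic bound
\[
d_n(u,v) \leq \ell_n(u) + \ell_n(v) - 2 \min_{w \in I} \ell_n(w) + 2,
\]
valid for either of the two lexicographical intervals $I$ between $u$ and $v$ in $\Tn$. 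Passing to the limit along subsequences, any subsequential GHP limit of the rescaled quadrangulations can be realised as the quotient of $\Ta$ by a random pseudo-distance $D$ satisfying $D(v,v') \leq \kappa \, \Da^\circ(v,v')$ pointwise, and hence by the maximality of $\Da^*$ in \eqref{eq:def-D*} one has $D \leq \kappa \, \Da^*$, for an appropriate constant $\kappa = \kappa(\pof, \alpha)$ determined by the scaling constants in Marzouk's convergence together with the variance of the discrete label increments.

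The decisive second step is the matching lower bound $D \geq \kappa \Da^*$. Two routes are available in the Brownian setting: Le Gall's, which relies on a detailed analysis of geodesics and their confluence properties together with the identification of the cut-locus with the CRT, and Miermont's, which exploits an invariance of the limit under re-rooting via a variant of Schaeffer's bijection with a uniformly sampled source vertex. I would first pursue a Miermont-type strategy, since the stable snake inherits a natural re-rooting invariance from the uniform re-rooting property of stable trees, and since the Bouttier--Di Francesco--Guitter framework adapts cleanly to the discrete stable setting. Concretely, one would sample a pair of vertices $(u_n, v_n)$ in $\Mn$ according to $\nu_n \otimes \nu_n$, compute the joint law of $(u_n, v_n, d_n(u_n, v_n))$ in two ways (via the original CVS bijection and via one re-rooted at $u_n$), and pass to the limit using the finite-dimensional convergence of the label process. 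Le Gall's approach via geodesic confluence should provide a secondary line of attack; I expect both to require genuinely new ideas in the stable setting.

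The main obstacle will be the presence of infinite-degree branch points (hubs) in $\Ta$, which are the defining feature of the stable regime. In the Brownian case, all identifications made by $\approx$ occur via local binary branchings near leaves of the CRT, and every existing argument exploits this local structure; in particular, there are almost surely no three disjoint geodesics from a typical point. In the stable case, hubs carry a rich local combinatorial structure inherited from the macroscopic branchings in $\Tn$, and one must prove that no spurious identifications arise from the discrete geometry around them beyond those already captured by $\Da^\circ$. This will require quantitative control on the number and spatial arrangement of discrete vertices near each hub together with sharp fluctuation estimates for the labels along the adjacent subtrees---precisely the type of estimates developed elsewhere in this paper for the Hausdorff dimension computation and for the volume fluctuations around typical points. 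A secondary issue is proving the conjectured sphericity, which in the Brownian case was obtained via Moore's theorem \cite{LeGall2007BrownianMapTopological, miermont2008sphericity}; applying Moore's theorem in the stable case is delicate because the cellularity of the fibres of the quotient is less clear when the cut-locus contains hubs of infinite degree. Once uniqueness and the lower bound are established, subsequential convergence upgrades to the full convergence stated in the conjecture, and the value of $\kappa$ can be read off the normalising constants in the convergence of the label process to the stable snake.
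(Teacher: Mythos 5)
There is a genuine gap, and it is the whole substance of the statement: what you are asked to prove is stated in the paper as a \emph{conjecture}, and the paper itself does not prove it --- it only establishes what corresponds to your first step. Your first paragraph (Marzouk's convergence of the label process, the CVS bound $d_n \leq d^{\circ}_n$, tightness, and the conclusion that any subsequential GHP limit is a quotient of $\Ta$ with metric $D \leq \kappa \Da^*$) is exactly the content of Theorem~\ref{thm:limits} and its proof in Section~3 of the paper. Everything beyond that, namely the lower bound $D \geq \kappa \Da^*$ (equivalently, uniqueness of the subsequential limits and identification with the stable sphere), is precisely the open part of the conjecture, and your proposal does not prove it: you list the two known strategies from the Brownian case (Le Gall's geodesic-confluence/cut-locus argument and Miermont's multi-pointed/re-rooting argument), and you yourself concede that both ``require genuinely new ideas in the stable setting,'' in particular quantitative control of the identifications near hubs of infinite degree. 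A survey of plausible attack routes, however reasonable, is not a proof; no argument is given that rules out extra identifications in $D$ beyond those forced by $\Da^{\circ}$, which is the decisive step.

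Two smaller points. First, your description of Miermont's strategy is not quite what makes his Brownian proof work: the key there is a multi-pointed version of the bijection and a careful study of delays between geodesics to several sources, not merely computing the law of $(u_n,v_n,d_n(u_n,v_n))$ under two rootings; as stated, your second step is too vague to assess even as a programme. Second, sphericity is not needed for the GHP convergence claimed in the conjecture, so the discussion of Moore's theorem, while relevant to the broader picture, does not bear on the statement at hand. In short: your upper-bound half reproduces the paper's Theorem~\ref{thm:limits}; the lower-bound half, which is what the conjecture actually asserts beyond that theorem, remains unproved both in your proposal and in the paper.
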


The starting point of this paper is the following result.

\begin{theorem} \label{thm:limits}
The sequence $\left(\Mn , n^{-\frac{\alpha -1}{2 \alpha}} \, \dmn, \nu_n, \rho_n \right)_{n \geq 0}$ is tight in the pointed Gromov-Hausdorff-Prokohorov topology. Moreover, any subsequential limit $(\M,D, \nu, \rho)$ can be defined as a quotient space of $\left(\mathbf{m}_\alpha , D^*_\alpha, \nu_{\alpha}, \rho_{\alpha} \right)$.
\end{theorem}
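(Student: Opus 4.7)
The strategy I would follow is the classical one introduced by Le Gall and Miermont for the Brownian sphere, in which Marzouk's convergence~\cite{marzouk-brownianstable} of the labelled discrete stable snake plays the role previously held by the convergence of the Brownian snake. Write $Z_n : \Tn \to \ZZ$ for the discrete label process and, for $u,v \in \Tn$, set
\[
D_n^{\circ}(u,v) := Z_n(u) + Z_n(v) - 2 \min_{w \in I^n_{u,v}} Z_n(w) + 2,
\]
where $I^n_{u,v}$ is the discrete lexicographic interval minimising the right-hand side. The CVS bijection yields the key combinatorial upper bound $\dmn(u,v) \leq D_n^{\circ}(u,v)$ for every pair of tree vertices, and this is essentially the only property of $\dmn$ needed here.

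First I would transfer the setup to a common probability space via Skorokhod's representation, so that the rescaled contour and label processes of $\Tn$ converge almost surely and uniformly to those of $\Ta$ and $(Z_v)_{v \in \Ta}$, as in~\cite{marzouk-brownianstable}. This provides uniform convergence of $n^{-(\alpha-1)/(2\alpha)} D_n^{\circ}$ to $\Da^{\circ}$ up to a multiplicative constant $\kappa$ depending on $\pof$. For tightness in the pointed Gromov-Hausdorff-Prokhorov topology, I would invoke Gromov's compactness criterion: uniform boundedness of diameters is inherited from $\dmn \leq D_n^{\circ}$ together with uniform boundedness of rescaled labels, while equi-continuity of the rescaled distance functions is transferred from equi-continuity of $n^{-(\alpha-1)/(2\alpha)} D_n^{\circ}$, itself a consequence of the uniform Hölder-type regularity of $Z_n$ contained in Marzouk's proof. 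Convergence of the measures $\nu_n$ and roots $\rho_n$ is automatic, since they are the pushforwards under the CVS identification of the uniform measure and the root of $\Tn$.

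Along any convergent subsequence one then obtains a limit $(\M, D, \nu, \rho)$ together with a 1-Lipschitz surjection $\Ta \to \M$, constructed as a limit of the CVS identifications between $\Tn$ and $\Mn$. Pulling $D$ back through this surjection gives a pseudo-distance on $\Ta$ satisfying $D \leq \kappa\, \Da^{\circ}$ (by passing the combinatorial bound $\dmn \leq D_n^{\circ}$ to the limit). Since $D$ is itself a pseudo-distance, the maximality property in the definition~\eqref{eq:def-D*} of $\Da^*$ forces $D \leq \kappa\, \Da^*$ on $\Ta \times \Ta$. In particular $\Da^*(v,v') = 0$ implies $D(v,v')=0$, so the surjection $\Ta \to \M$ descends through the quotient $\Ta \to \Ma$ to a 1-Lipschitz surjection $\Ma \to \M$, exhibiting $\M$ as a quotient of $\Ma$, with the measure and root being the pushforwards of $\nu_\alpha$ and $\rho_\alpha$.

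The main obstacle is the uniform-in-$n$ regularity estimate for $Z_n$ required for equi-continuity. In the Brownian case this reduces to the classical Hölder-$1/4$ regularity of the Brownian snake, but in the stable setting the presence of branch points of infinite degree (hubs) in $\Ta$ makes the natural analogue considerably more delicate: increments of $Z_n$ across distant vertices in the contour order can correspond to small displacements along a shared ancestor, and conversely. The relevant moment estimates are essentially present in~\cite{marzouk-brownianstable}, but converting them into a uniform modulus of continuity robust enough to feed into Gromov's criterion, and to survive passage to the limit in the identification step, is where most of the real work of the proof will lie.
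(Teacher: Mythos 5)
Your proposal follows essentially the same route as the paper: Marzouk's joint convergence of the rescaled Łukasiewicz path, height and label processes, the CVS bound $\dmn \leq d^{\circ}_n$, tightness of the rescaled distance processes exactly as in Le Gall's argument, Skorokhod representation plus a correspondence to extract the limit, and the inequality $D \leq \Da^*$ (via the maximality of $\Da^*$ among pseudo-distances dominated by $\Da^{\circ}$) to exhibit $\M$ as a quotient of $\Ma$. The only correction is your final paragraph: the uniform regularity of the discrete labels that you identify as the main obstacle is already packaged in Marzouk's functional convergence to the (continuous) snake tip, so after Skorokhod representation the equicontinuity of $n^{-\frac{\alpha-1}{2\alpha}} d^{\circ}_n$ and hence tightness come essentially for free, which is why the paper treats this theorem as the straightforward part and reserves the real work for the volume estimates.
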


The main contribution of the paper is in fact a study of volumes of balls at typical points in a limit $(\M,D, \nu, \rho)$. We start with the Hausdorff dimension.

\begin{theorem}\label{thm:Hausdorff dimension}
Any subsequential limit $(\M,D, \nu, \rho)$ has Hausdorff dimension $\frac{2\alpha}{\alpha-1}$ almost surely, as does $\left(\mathbf{m}_\alpha ,D^*_\alpha, \nu_{\alpha}, \rho_{\alpha} \right)$.
\end{theorem}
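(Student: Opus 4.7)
The plan is to prove matching upper and lower bounds on $\dim_H(\M)$; the statement for $\Ma$ follows as the special case of the subsequential-limit framework where the further quotient $\Ma \to \M$ is trivial.

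\textit{Upper bound via H\"older regularity.} I would show that the natural parametrisation $\Pi : [0,1] \to \M$, obtained by composing the canonical map $p : [0,1] \to \Ta$ with the successive quotients $\Ta \to \Ma \to \M$, is almost surely H\"older of any exponent $\gamma < \frac{\alpha-1}{2\alpha}$. Conditionally on $\Ta$, the stable snake $\zeta_s := Z_{p(s)}$ satisfies $\zeta_s - \zeta_t \sim \mathcal{N}(0, \dt(p(s), p(t)))$, and $\dt(p(s), p(t))$ is itself bounded by twice the oscillation of the stable height function $H$ on $[s \wedge t, s \vee t]$. Combining the classical fact that $H$ is a.s.\ H\"older of any exponent $\beta < \frac{\alpha-1}{\alpha}$ with Kolmogorov's criterion, one deduces that $\zeta$ is a.s.\ H\"older of any exponent $\gamma < \beta/2$. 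The chain of inequalities
\[
D(\Pi(s), \Pi(t)) \leq \Da^{\circ}(p(s), p(t)) \leq 4\, \omega_\zeta([s \wedge t, s \vee t])
\]
then transfers this regularity to $\Pi$, and since a $\gamma$-H\"older image of $[0,1]$ has Hausdorff dimension at most $1/\gamma$, we conclude $\dim_H(\M) \leq \frac{2\alpha}{\alpha-1}$.

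\textit{Lower bound via ball volume estimates.} For the lower bound, I would appeal to the mass distribution principle: it suffices to show that, for $\nu$-almost every $x \in \M$ and every $\epsilon > 0$, $\nu(B(x, r)) \leq r^{\frac{2\alpha}{\alpha-1} - \epsilon}$ for all sufficiently small $r$. Via Markov's inequality and Borel--Cantelli along a geometric sequence of radii, this reduces to the moment bound $\E{\nu(B(x^*, r))} \leq C_\epsilon\, r^{\frac{2\alpha}{\alpha-1} - \epsilon}$ at a $\nu$-uniform point $x^*$. After rescaling this is equivalent to a uniform-in-$n$ discrete two-point function estimate
\[
\pr{\dmn(v_n^*, w_n^*) \leq r\, n^{\frac{\alpha-1}{2\alpha}}} \leq C_\epsilon\, r^{\frac{2\alpha}{\alpha-1} - \epsilon}
\]
for independent uniform vertices $v_n^*, w_n^*$ of $\Mn$, which then passes to any subsequential scaling limit using Theorem~\ref{thm:limits}.

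\textit{Main obstacle.} The technical crux is the two-point function estimate. While the CVS bijection readily provides upper bounds on $\dmn(v, w)$ in terms of label differences, matching lower bounds require a careful analysis of the minimum of the discrete stable snake on lexicographical intervals between $v$ and $w$. This is substantially more delicate than in the Brownian case because of the infinite-degree hubs of the stable tree: each hub produces a long plateau in the contour exploration together with a large subtree of labelled branches attached to a single vertex, which can compress distances in $\Mn$ significantly and must be controlled separately. Carrying out this analysis, together with the ensuing finer description of volume fluctuations at typical points, is the technical heart of the paper.
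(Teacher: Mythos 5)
Your upper bound is correct and is essentially the paper's argument: H\"older continuity of the snake tip (\cref{lem:height holder}) plus $D\leq \Da^*\leq \Da^{\circ}$ makes the projection $[0,1]\to\M$ H\"older of any exponent $<\frac{\alpha-1}{2\alpha}$, which gives $\dim_H\leq\frac{2\alpha}{\alpha-1}$ (the paper phrases this as an explicit $r$-cover by balls centred at $\pi(ir^{d+2\epsilon})$, but it is the same estimate). One small caveat: the statement for $(\Ma,\Da^*)$ is not ``the special case where the further quotient is trivial'' --- it is not known that $\Ma$ arises as a subsequential limit (that is precisely the conjecture). What saves you is the inequality $D\leq\Da^*$: volume upper bounds for $D$-balls transfer to $\Da^*$-balls, and the H\"older/covering bound applies to $\Da^*$ directly since $\Da^*\leq\Da^{\circ}$.

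The genuine gap is the lower bound. You reduce it, correctly in principle, to the first-moment/two-point estimate $\E{\nu(B(x^*,r))}\leq C_\epsilon r^{\frac{2\alpha}{\alpha-1}-\epsilon}$, but you do not prove it; declaring it ``the technical heart of the paper'' does not discharge it, and this estimate is exactly the content the theorem rests on (it is the first assertion of \cref{thm:limsup fluctuations intro}, proved via \cref{prop:upper vol tail bound lazy} and \cref{cor:log volume fluctuations UB}, and then fed into \cite[Lemma 2.1]{Duquesne2010packing}). Moreover, the route you sketch is misdirected in two ways. First, minima of the label process over \emph{lexicographical} intervals only yield \emph{upper} bounds on map distances (this is $d^{\circ}$ and \cref{lem:discrete-bound-d-dstar}); the lower bound you need is the cactus-type bound involving the minimum along the \emph{tree geodesic} $[[u,v]]$, i.e.\ \cref{lem:D snake upper bound} (discretely, \cite[Proposition 2.3.8(ii)]{MiermontStFlour2014}). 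Second, no uniform-in-$n$ discrete analysis is needed: the paper works entirely in the continuum, combining the cactus bound with the spinal decomposition at a point chosen from the local time $\ell^a$ (\cref{prop:firstmomentdLs}) and a Laplace-transform computation for the Poisson process of grafted subtrees, yielding the tail bound $\N_0(\nu(B(U_a,r))\geq\lambda r^{\frac{2\alpha}{\alpha-1}})\leq c\lambda^{-\frac{\alpha-1}{2\alpha}}$. Note also that the shortcut available in the Brownian case --- identifying a typical point with the minimum of the snake after re-rooting, so that ball volumes become occupation measures near the minimum --- fails here by \cref{cor:minimum atypical}, so some such new argument is unavoidable; as it stands your proposal leaves precisely that part unproved.
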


Our volume results are actually a lot more precise than this. As is the case for stable trees (and for similar reasons), the volumes of balls at typical points experience logarithmic fluctuations above their typical value.

\begin{theorem}\label{thm:limsup fluctuations intro}
Take any $\epsilon>0$. Let $(\M,D,\nu,\rho)$ be a subsequential limit appearing in Theorem \ref{thm:limits}. Then, almost surely, it holds for $\nu$-almost every $x$ in $\M$ that
\[
\limsup_{r \downarrow 0} \frac{\nu (B(x,r))}{r^{\frac{2\alpha}{\alpha-1}}(\log r^{-1})^{\frac{2\alpha+\epsilon}{\alpha-1}}} = 0, \qquad \limsup_{r \downarrow 0} \frac{\nu (B(x,r))}{r^{\frac{2\alpha}{\alpha-1}}(\log r^{-1})^{\frac{1-\epsilon}{\alpha-1}}} = \infty.
\]
\end{theorem}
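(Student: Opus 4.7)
The natural strategy is to transfer the problem back to the stable tree $\Ta$ via the canonical projection $\pi : \Ta \to \M$ provided by Theorem~\ref{thm:limits}, and then to establish stable-snake occupation estimates in the spirit of the volume fluctuation results of Duquesne--Le Gall for stable trees. Write $\mathfrak{m}$ for the canonical mass measure on $\Ta$, so that $\nu = \pi_\ast \mathfrak{m}$. The basic link between the label process and distances in $\M$ is the deterministic inequality
\[
|Z_v - Z_{v'}| \le D(\pi(v), \pi(v')), \qquad v, v' \in \Ta,
\]
which at the discrete level reflects the fact that CVS labels encode graph distances to the root up to an additive constant, and passes to the scaling limit. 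Sampling $v_\ast$ according to $\mathfrak{m}$ gives a $\nu$-typical point $x = \pi(v_\ast)$ and the inclusion
\[
\pi^{-1}(B(x,r)) \subseteq \bigl\{v \in \Ta : |Z_v - Z_{v_\ast}| \le r\bigr\},
\]
so the upper bound on $\nu(B(x,r))$ reduces to controlling the snake-occupation $\mathfrak{m}\bigl(\{v : |Z_v - Z_{v_\ast}| \le r\}\bigr)$ at a typical $v_\ast$. In the opposite direction, the bound $D \le D_\alpha^\circ$ converts nearly-flat regions of $Z$ into subsets of $\M$ close to $x$, and will drive the lower bound.

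\textbf{Upper bound.} Fix a tree-scale $s$ to be tuned against $r$ and split the snake-occupation into $v \in B_\Ta(v_\ast, s)$ and $v \notin B_\Ta(v_\ast, s)$. The near contribution is at most $\mathfrak{m}(B_\Ta(v_\ast, s))$, whose upper fluctuations at an $\mathfrak{m}$-typical point are known, via stable-tree techniques à la Duquesne--Le Gall, to be of order $s^{\alpha/(\alpha-1)}$ up to polylogarithmic corrections. The far contribution is bounded by a dyadic union over tree-annuli together with the fact that, given $\Ta$, the variable $Z_v - Z_{v_\ast}$ is centred Gaussian with variance $d_\alpha(v, v_\ast) \ge s$, so being $r$-close to $Z_{v_\ast}$ has probability of order $r/\sqrt{s}$. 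Optimising with $\sqrt{s}$ of order $r / (\log r^{-1})^{c}$ for a suitable $c$ produces, along a geometric sequence of radii, the bound $r^{2\alpha/(\alpha-1)}(\log r^{-1})^{(2\alpha + \epsilon)/(\alpha-1)}$. Borel--Cantelli together with monotonicity of $r \mapsto \nu(B(x,r))$ then promotes this to the claimed almost-sure statement.

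\textbf{Lower bound and main obstacle.} For the lower bound we seek, almost surely, a random sequence of scales $s_k \downarrow 0$ at which a hub of $\Ta$ carrying subtree mass at least $s_k^{\alpha/(\alpha-1)}(\log s_k^{-1})^{1/(\alpha-1)}$ sits within tree-distance $s_k$ of $v_\ast$, and on which the snake restricted to the subtree attached at that hub remains within $r_k \asymp \sqrt{s_k}/(\log s_k^{-1})^{c'}$ of $Z_{v_\ast}$. Since $D \le D_\alpha^\circ$, all this mass is then contained in $B(x, r_k)$, yielding $\nu(B(x, r_k)) \ge r_k^{2\alpha/(\alpha-1)}(\log r_k^{-1})^{(1-\epsilon)/(\alpha-1)}$ on each such scale. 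The hardest step is to verify enough decorrelation of these events across well-separated scales to close a Borel--Cantelli / second-moment argument. The branching property of $\Ta$ makes this plausible once the hubs involved are sufficiently far apart, but synthesising the joint conditional control of snake fluctuations and tree geometry across all the relevant scales is the technical heart of the proof.
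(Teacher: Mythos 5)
Your upper-bound argument has a genuine gap. The inequality $|Z_v - Z_{v'}| \le D(\pi(v),\pi(v'))$ is correct, but the resulting inclusion $\pi^{-1}(B(x,r)) \subseteq \{v : |Z_v - Z_{v_*}| \le r\}$ is far too lossy to produce the exponent $\frac{2\alpha}{\alpha-1}$: the right-hand set is a level window of the snake, and at a $\nu$-typical point the occupation measure of the snake has positive density at the level $Z_{v_*}$, so its mass is of order $r$, not $r^{\frac{2\alpha}{\alpha-1}}$. Your near/far split does not repair this: with $\sqrt{s} \asymp r/(\log r^{-1})^c$ the Gaussian estimate $r/\sqrt{s}$ exceeds $1$ and is vacuous, and even optimising correctly the far contribution is dominated by points at tree-distance of order one, where the Gaussian small-ball probability is $\asymp r$ and the available mass is $\asymp 1$; so a first-moment bound on the far term gives order $r$ at best, off by a large polynomial factor (recall $\frac{2\alpha}{\alpha-1}>4$). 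What is actually needed, and what the paper uses, is the cactus-type lower bound of Lemma \ref{lem:D snake upper bound}, $D(u,v) \ge Z_u + Z_v - 2\min_{x \in [[u,v]]} Z_x$, which constrains the snake along the \emph{entire} tree geodesic, not just at the endpoints: combined with the spinal decomposition at the level-$a$ local time (Proposition \ref{prop:firstmomentdLs}) and a Laplace-transform computation, this yields the tail bound $\N_0(\nu(B(U_a,r)) \ge \lambda r^{\frac{2\alpha}{\alpha-1}}) \le c\lambda^{-\frac{\alpha-1}{2\alpha}}$ (Proposition \ref{prop:upper vol tail bound lazy}), and Borel--Cantelli along $r_k = 2^{-k}$ with $\lambda_k = (\log r_k^{-1})^{\frac{2\alpha+\epsilon}{\alpha-1}}$ then gives the first limsup statement; note that the specific logarithmic exponent $\frac{2\alpha+\epsilon}{\alpha-1}$ is dictated by this $\lambda^{-\frac{\alpha-1}{2\alpha}}$ decay, which your sketch never produces.

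Your lower-bound plan is in the same spirit as the paper's (a large hub within tree-distance of order $r^2$ of the typical point, snake fluctuations confined to scale $r$, then $D \le \Da^{\circ}$ to place the attached mass inside $B(x,r)$; this is Proposition \ref{prop:lower vol tail bound} with per-scale probability $\gtrsim \lambda^{-(\alpha-1)}$, hence $(\log r^{-1})^{-(1-\epsilon)}$ at $\lambda = (\log r^{-1})^{\frac{1-\epsilon}{\alpha-1}}$, which is non-summable). However you explicitly leave open the decorrelation across scales, and this is precisely the step that must be closed. The paper does it by re-rooting at the uniform point (Proposition \ref{prop:uniform rerooting}), introducing the stopping times $\tau_n = \inf\{s : |\xi_s| \ge \epsilon r_n\}$ for the spine Brownian motion, and observing that the subordinator increments and Poisson decorations attached to the disjoint spine intervals $[\tau_{n+1},\tau_n]$ are independent; the second Borel--Cantelli lemma then applies directly, and a separate estimate (the events $A_n^l$, controlled via \eqref{eq:rangesnake}) keeps the relevant spine segment within distance $r_n$ of the root. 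Without an argument of this kind your sketch does not establish that the good scales occur infinitely often, so as written the second limsup statement is not proved.
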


One can ask whether there is a precise regularly varying gauge function $g(r)$ such that the limit supremum appearing in Theorem \ref{thm:limsup fluctuations intro} is equal to a finite positive constant almost surely. In the case of stable trees, this question has been answered negatively by Duquesne \cite[Proposition 1.9]{Duquesne2010packing} (see also \cite[Theorem 1.10]{Duquesne2010packing} for a similar statement for the exact Hausdorff measure). We have no reason to expect otherwise for $(M, D, \nu, \rho)$.

Since $D \leq \Da^*$, the first result of Theorem \ref{thm:limsup fluctuations intro} clearly also holds for $\left(\mathbf{m}_\alpha ,D^*_\alpha, \nu_{\alpha}, \rho_{\alpha} \right)$, and in fact an inspection of the proof shows that the same is true for the second result.

We now turn to a discussion of fluctuations below typical values. The fluctuations are log-logarithmic, as is again the case for stable trees.

\begin{theorem}\label{thm:liminf fluctuations intro}
Let $(\M,D,\nu,\rho)$ be a subsequential limit appearing in Theorem \ref{thm:limits}. Then, almost surely, for $\nu$-almost every $x$ in $\M$
\[
\liminf_{r \downarrow 0} \frac{\nu (B(x,r))}{r^{\frac{2\alpha}{\alpha-1}}(\log \log r^{-1})^{-\frac{1}{\alpha-1}}} < \infty, \qquad \liminf_{r \downarrow 0} \frac{\nu (B(x,r))}{r^{\frac{2\alpha}{\alpha-1}}(\log \log r^{-1})^{-\frac{32\alpha}{\alpha-1}}} = \infty.
\]
Moreover, almost surely,
\[
\liminf_{r \downarrow 0} \frac{\inf_{x \in M} \nu (B(x,r))}{r^{\frac{2\alpha}{\alpha-1}}(\log r^{-1})^{-\frac{32\alpha}{\alpha-1}}} > 0.
\]
\end{theorem}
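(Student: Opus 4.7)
The plan is to work through the snake representation and exploit $D \leq \Da^* \leq \Da^\circ$ to derive lower bounds on $\nu(B(x,r))$ at $x = \pi(p(v))$, where $v \in \Ta$ is $\pof$-typical and $p : \Ta \to \Ma$, $\pi : \Ma \to \M$ are the canonical quotient maps. We use the spinal decomposition of $(\Ta, Z)$ around such a $v$: the ancestral branch from $\rho$ to $v$ carries a Brownian label process, and independent stable snakes are grafted at the countable collection of hubs whose heights are encoded by an $\alpha$-stable subordinator (see \cite{LeGDuqMono, marzouk-brownianstable}). The inequality $D(\pi(p(w)), x) \leq \Da^\circ(w, v) = Z_w + Z_v - 2\inf_{u \in I_{v,w}} Z_u$ then implies that $B(x, r)$ contains the projection of the snake-neighbourhood of $v$ at scale $r$, and the $\pof$-mass of this neighbourhood drives all lower bounds on $\nu(B(x, r))$.

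\textbf{Parts 2 and 3.} For the second displayed inequality in \Cref{thm:liminf fluctuations intro}, we bound the $\pof$-mass of the snake-neighbourhood at scale $r$ below by a constant times $r^{2\alpha/(\alpha-1)}(\log\log r^{-1})^{-32\alpha/(\alpha-1)}$ almost surely for all small $r$. This combines (i) a volume lower bound for the stable tree ball of radius $\sim r^2$ around $v$ --- essentially the tree-level liminf of Duquesne \cite{Duquesne2010packing}, whose natural exponent is $1/(\alpha-1)$ --- with (ii) a Gaussian concentration estimate for the fraction of the tree ball whose snake labels lie within $r$ of $Z_v$. A Borel--Cantelli argument at dyadic radii combines the two into the exponent $32\alpha/(\alpha-1)$, the bulk of which comes from worst-case Gaussian deviations of the snake across $\log r^{-1}$ scales. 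For the uniform lower bound in part 3, we cover $\M$ by an $r$-net of polynomial cardinality (using the Hausdorff dimension $2\alpha/(\alpha-1)$ from \Cref{thm:Hausdorff dimension}) and apply the pointwise lower bound with a union bound; the polynomial cardinality upgrades $\log\log r^{-1}$ to $\log r^{-1}$ while keeping the exponent $32\alpha/(\alpha-1)$ intact.

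\textbf{Part 1 and main obstacle.} The hardest statement is the first displayed inequality, which calls for upper bounds on $\nu(B(x, r_n))$ at some sequence $r_n \downarrow 0$. Because $D$-balls can strictly contain $\Da^*$-balls, snake-level arguments alone cannot provide such upper bounds, and we instead argue at the discrete level. On $\Mn$ we identify a configuration of the discrete snake near a $\nu_n$-typical vertex which forces the ball of discrete radius $\sim r_n n^{(\alpha-1)/(2\alpha)}$ to have mass at most $C r_n^{2\alpha/(\alpha-1)}(\log\log r_n^{-1})^{-1/(\alpha-1)}$: concretely, that a specific hub on the discrete ancestral branch supports only subtrees of bounded size up to discrete tree-height $\sim r_n^2 n^{(\alpha-1)/\alpha}$, an event whose probability is controlled by the $\alpha$-stable offspring tail. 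With the $r_n$ chosen on a logarithmic grid, the events are sufficiently close to independent across scales that a Borel--Cantelli argument forces them infinitely often. The pointed Gromov--Hausdorff--Prokhorov convergence of \Cref{thm:limits}, combined with lower semicontinuity of ball measures at continuity radii, then transfers the discrete bound to any subsequential limit. The main technical obstacle is precisely this transfer, since the discrete event must yield a bound valid for every possible limiting metric $D \leq \Da^*$; we plan to handle this by phrasing the obstruction purely in terms of the CVS bijection and the discrete graph distance, so that the resulting upper bound descends to all subsequential limits simultaneously.
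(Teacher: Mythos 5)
There are genuine gaps in all three parts, although your overall skeleton for parts 2 and 3 (a pointwise lower-tail bound plus Borel--Cantelli at dyadic radii, then a cover and union bound for the global statement) does match the paper's. The substance is the pointwise tail bound, and your mechanism for it does not work. First, proximity of labels does not imply proximity in $D$: the bound you quote is $D\le \Da^\circ$ with $\Da^\circ(v,w)=Z_v+Z_w-2\inf_{u\in I_{v,w}}Z_u$, which involves the infimum of the snake over a whole lexicographical interval, so a point $w$ of the tree ball with $|Z_w-Z_v|\le r$ can be at $D$-distance far larger than $r$. Consequently ``the fraction of the tree ball whose snake labels lie within $r$ of $Z_v$'' is not a lower bound for $\nu(B(x,r))$. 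In the paper (\cref{prop:lower vol tail bound lazy}) the volume is harvested from entire subtrees grafted at a nearby hub on which the snake stays in a window, and the conversion into $D$-balls goes through chaining over times $s,t$ with $p(s)=p(t)$ equal to the hub; this is what makes $D\le\Da^*$ usable. Second, the quantitative engine is a lower-tail bound that is stretched exponential in $\lambda$, of the form $Ce^{-c\lambda^{(\alpha-1)/(32\alpha)}}$, obtained from Poissonian counts of hubs and of grafted subtrees (not from Gaussian concentration: the relevant randomness is the stable subordinator of hub sizes and the Poisson process of subtrees), and it is exactly this strength of decay that produces the exponent $\frac{32\alpha}{\alpha-1}$ after Borel--Cantelli (\cref{cor:log volume fluctuations LB}) and survives the union bound over a polynomial net for the global statement (\cref{cor:liming global LB}). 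Your sketch produces no such bound. For part 3, Hausdorff dimension gives no covering-number estimate; the paper's polynomial $\tfrac r2$-net comes from the H\"older continuity of $\Wt$ (\cref{lem:height holder}) together with $D\le\Da^*$, and the net points are uniform time points precisely so that the tail bound applies to each of them (via \cref{rmk:vol to diam} and re-rooting invariance, \cref{prop:uniform rerooting}). Also, $\pof$ is the offspring distribution, not a measure on $\Ta$; you mean $\Volt$ or $\nu$.

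For part 1, your starting premise is false: snake-level arguments do give volume upper bounds, via the cactus bound of \cref{lem:D snake upper bound}, $D(u,v)\ge Z_u+Z_v-2\min_{x\in[[u,v]]}Z_x$, which is stated for every subsequential limit $(\M,D)$. The paper's \cref{lem:liminf local sup} combines this with the exact liminf result of Duquesne for stable-tree ball volumes (gauge $r^{\frac{\alpha}{\alpha-1}}(\log\log r^{-1})^{-\frac{1}{\alpha-1}}$) and the observation that, independently of the tree, the Brownian label along the spine from $U$ drops by $r$ within tree distance $Kr^2$ with probability at least $1-\delta$; Fatou then gives the required radii infinitely often. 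Your discrete substitute is not an argument: the event that a hub on the ancestral branch ``supports only subtrees of bounded size'' is a tree-volume event and by itself forces no lower bound on the graph distance in $\Mn$ -- you never invoke any label-drop or discrete cactus mechanism, without which map balls cannot be volume-bounded from above; the per-scale probabilities and the quasi-independence across logarithmically spaced scales, which is where the $(\log\log)^{-\frac{1}{\alpha-1}}$ gauge actually comes from, are precisely the hard estimates the paper imports from \cite{duquesne2012exact} and for which you offer no substitute; and the transfer of an ``infinitely many scales'' statement through a subsequential GHP limit requires uniform-in-$n$ control over a range of scales growing with $n$ plus a diagonal argument (a fixed $n$ only sees scales above the approximation error), which you acknowledge but do not supply. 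The continuum route via \cref{lem:D snake upper bound} avoids all of this and is the intended one.
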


It was similarly shown in \cite{duquesne2012exact} that stable trees admit $r^{\frac{\alpha}{\alpha-1}}(\log \log r)^{-\frac{1}{\alpha-1}}$ as a regular packing gauge function, meaning that the limit infimum when renormalising volumes by this function is almost surely a finite, non-zero constant. In addition, Duquesne \cite[Theorem 1.1]{duquesne2012exact} showed that the packing measure associated with the gauge function $r^{\frac{\alpha}{\alpha-1}}(\log \log r^{-1})^{-\frac{1}{\alpha-1}}$ coincides with the natural volume measure on stable trees; we leave it open whether a similar result is true for $(M, D, \nu, \rho)$. The exponent of $\frac{32\alpha}{\alpha-1}$ appearing in the second  and third results is certainly not optimal.

We remark that do not expect global supremal volumes to be as tightly concentrated as global infima; in fact it is fairly easy to show that there will exist balls $B(x,r)$ in $\M$ of volume at least $r^2$. This is due to the fact that there are also polynomial volume fluctuations at hubs in the underlying stable tree (see Remark \ref{rmk:hub volume LB}).

The main tool to obtain lower bounds on volumes is the fact that $D \leq \Da^*$ and hence it is sufficient to lower bound the volumes of balls measured with respect to the metric $\Da^*$. This can be achieved by controlling the fluctuations of the snake in various intervals. In order to obtain upper bounds on volumes of balls, we will instead need a lower bound on $D$. This is done by relating $D$ to the metric on the associated cactus; see Lemma \ref{lem:D snake upper bound} for the precise result.

The heavier upper tails on the volumes allows us to make an important distinction between $(\M, D)$ and the Brownian sphere. Let $v^*$ denote the point in $\Ta$ where the minimum of $Z$ is attained (like in the Brownian case, it is unique almost surely, see Proposition~\ref{prop:minimum unique}). By comparing the bounds in Theorem \ref{thm:limsup fluctuations intro} with bounds on $\nu(B(v^*, r))$ obtained in \cite{archer2024snakes}, we deduce that $v^*$ is not a typical point of $(M, D, \nu, \rho)$.

\begin{corollary}\label{cor:minimum atypical}
The vertex $v^*$ is not a typical point in $(\M, D)$, meaning that $v^*$ cannot be coupled with an independent point chosen according to $\nu$.
\end{corollary}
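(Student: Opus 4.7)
The approach is to argue by contradiction, exploiting the incompatibility between the almost-sure volume-fluctuation profile at $v^*$ and at a $\nu$-typical point. Suppose that $v^*$ could be coupled with an independent $\nu$-sample $X$ (meaning $X$ has conditional law $\nu$ given $(\M, D, \nu, \rho)$ and is conditionally independent of everything else) in such a way that $v^* = X$ almost surely. Then for every measurable functional of the pointed metric measure space, the almost-sure value at $v^*$ would have to agree with the almost-sure value at $X$; in particular, the asymptotic behavior of $r \mapsto \nu(B(v^*, r))$ as $r \downarrow 0$ would coincide with that of $r \mapsto \nu(B(X, r))$.

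On the one hand, applying Theorem \ref{thm:limsup fluctuations intro} via Fubini to the independent $\nu$-sample $X$ yields that almost surely,
\[
\limsup_{r \downarrow 0} \frac{\nu(B(X, r))}{r^{\frac{2\alpha}{\alpha-1}}(\log r^{-1})^{\frac{2\alpha + \epsilon}{\alpha-1}}} = 0
\]
for every $\epsilon > 0$. On the other hand, \cite{archer2024snakes} studies the stable snake near its minimum, and in particular provides almost-sure lower bounds on the volume of the set of stable-tree vertices whose label lies within $r$ of $\min Z$. Since $\Da^\circ(v^*, v) \leq Z_v - Z_{v^*} + (Z_{v^*} - \inf_{I_{v^*, v}} Z) = Z_v - Z_{v^*}$ from \eqref{eq:def-Do} (as the infimum is attained at $v^*$), one has $\{v \in \Ta : Z_v - Z_{v^*} \leq r\} \subseteq B_{\Da^*}(v^*, r)$, and because $D \leq \Da^*$ further gives $B_{\Da^*}(v^*, r) \subseteq B_D(v^*, r)$, the snake-based lower bound transfers to a lower bound on $\nu(B(v^*, r))$. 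Provided this lower bound grows strictly faster than $r^{\frac{2\alpha}{\alpha-1}}(\log r^{-1})^{\frac{2\alpha+\epsilon}{\alpha-1}}$ along a subsequence $r \downarrow 0$, it contradicts the above limsup equalling zero, and the coupling cannot exist.

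The main obstacle is locating in \cite{archer2024snakes} a quantitatively sharp lower bound at $v^*$ and verifying that it strictly exceeds the typical-point upper bound of Theorem \ref{thm:limsup fluctuations intro}. The conceptual content is that the stable snake has anomalously slow growth of the set $\{Z_v - Z_{v^*} \leq r\}$, producing atypically heavy mass accumulation around $v^*$ in the quadrangulation limit; this excess mass is what prevents $v^*$ from being represented as a $\nu$-sample. A secondary point is that the inclusion $\{Z_v - Z_{v^*} \leq r\} \subseteq B_{\Da^*}(v^*, r)$ used above should be carefully justified, but this is immediate from the definition \eqref{eq:def-Do} once one notes that any lexicographical interval containing $v^*$ attains its infimum of $Z$ precisely at $v^*$.
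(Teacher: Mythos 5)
There is a genuine gap, and it lies exactly at the point you flag as ``the main obstacle'': your argument needs an almost-sure (or at least in-probability) \emph{lower} bound on $\nu(B(v^*,r))$ that beats the typical-point upper bound $r^{\frac{2\alpha}{\alpha-1}}(\log r^{-1})^{\frac{2\alpha+\epsilon}{\alpha-1}}$ along a subsequence, but no such bound exists in \cite{archer2024snakes}, and the known estimate there points in the opposite direction. What \cite[Theorem 1.3]{archer2024snakes} gives (via \cref{prop:properties of D}(3), since $\nu(B(\pi(s^*),r))=\overline{\I}[0,r]$) is the \emph{upper} bound recorded in \cref{prop:expectation minimum}: $\Noo_0\left(\nu(B(\pi(s^*),\epsilon))\right)=O\!\left(\epsilon^{\frac{2\alpha}{\alpha-1}}\right)$. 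So the minimum is a point with comparatively \emph{light} volume tails, not one with ``atypically heavy mass accumulation'' --- note also that your own conceptual sentence is internally inconsistent, since ``anomalously slow growth of $\{Z_v-Z_{v^*}\le r\}$'' means small mass near $v^*$, not heavy mass. The actual distinguishing feature is that a $\nu$-typical point has \emph{heavier} upper fluctuations than $v^*$: by \cref{prop:lower vol tail bound}(a), $\Noo_0\left(\nu(B(\pi(U),r))\ge \lambda r^{\frac{2\alpha}{\alpha-1}}\right)\ge c\lambda^{-(\alpha-1)}$ with $\alpha-1<1$, whereas if $v^*$ were typical, Markov's inequality applied to \cref{prop:expectation minimum} would force this probability to be at most $C\lambda^{-1}$; taking e.g.\ $\lambda=\log r^{-1}$ yields the contradiction. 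This is the paper's proof, and it is the reverse of your comparison. Your route cannot be repaired by citing \cite{archer2024snakes} more carefully, because the needed lower bound at $v^*$ would contradict (at least in spirit, and certainly is not implied by) the expectation bound that is actually available; moreover \cref{thm:limsup fluctuations intro} alone only controls typical points, so it cannot supply the discrepancy by itself.

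Two smaller remarks. First, the parts of your argument that are correct --- namely $D(\pi(s^*),\pi(s))=\Wt_s-\Wt_{s^*}$, hence $B_D(v^*,r)=\{v: Z_v-Z_{v^*}\le r\}$ --- are already \cref{prop:properties of D}(3) and need not be rederived from \eqref{eq:def-Do}. Second, to prove the corollary it suffices (and is what the paper does) to show that the \emph{law} of the ball-volume process at $v^*$ is incompatible with the law at an independent $\nu$-point; phrasing the argument through an exact coupling $v^*=X$ a.s.\ is fine but adds nothing, and the distributional comparison is most efficiently made through tail probabilities at a fixed scale $r$ rather than through almost-sure limsup statements.
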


We end the introduction with some comments on related models. At first sight one might guess that the subsequential scaling limits of $\left(\Mn , n^{-\frac{\alpha -1}{2 \alpha}} \, \dmn, \nu_n, \rho_n \right)_{n \geq 0}$ may be related to the dual maps, or the images by Tutte's bijection, of the stable maps of \cite{LeGallMiermont2010scaling}. Indeed, the latter is a random map model in which the size of typical \emph{faces} have stably-decaying tails, while the typical \emph{vertex} degrees are heavy-tailed in our model (see~\Cref{rem: vertex degrees} for a more detailed discussion). However, our result on the Hausdorff dimension clearly rules this out; while our limits have Hausdorff dimension $\frac{2\alpha}{\alpha-1}$, the stable duals have dimension $\frac{\alpha}{\alpha-3/2}$ when $\frac{3}{2}< \alpha < 2$, and in fact have exponential volume growth when $\alpha < \frac{3}{2}$~\cite{budd-curien-stable-spheres} (in the critical case of $\alpha=\frac{3}{2}$ the volume growth is stretched exponential \cite{kammerer2023large}). Nonetheless, like the limits of Theorem \ref{thm:limits}, scaling limits of these dual graphs are believed to be spherical when $\frac{3}{2}< \alpha < 2$. The stable maps themselves, on the other hand, are not believed to be spherical, and instead have more complicated fractal-like topology.

The tightness of Theorem \ref{thm:limits} is very straightforward to establish; the main contributions of the paper are the volume results. Throughout most of the paper we will assume implicitly that we have already restricted to an appropriate subsequence where the maps converge, without explicitly stating it every time.

\textbf{Organisation.} The paper is organised as follows. In Section \ref{section:stable-trees} we give some background on stable trees and stable snakes, and in Section \ref{section:discrete-setup} we give the definitions of all of the map models. In Section \ref{sctn:uniform points large} and \ref{sctn:uniform points small} we respectively consider the upper and lower fluctuations of volumes around uniform points. Finally we conclude in Section \ref{sctn:Hausdorff dim} with a quick deduction of the Hausdorff dimension.

\textbf{Acknowledgements.} We would like to thank Marie Albenque, Meltem \"Unel, Armand Riera and Alejandro Rosales-Ortiz for helpful conversations, and Nicolas Curien for a comment which led to Corollary \ref{cor:minimum atypical}. EA and LM were supported by the ANR grant ProGraM (ANR-19-CE40-0025).


\section{Background on stable trees and snakes}
\label{section:stable-trees}

\subsection{Stable trees}

\subsubsection{Stable \Levy processes and the \Ito excursion measure}\label{sctn:Levy and Ito}
We consider a spectrally positive stable process \Levy $X$ with index $\alpha \in (1,2)$. Its law started at $0$ is denoted by $P$. This process is characterised by its Laplace transform and the fact that it has no negative jumps. For $\lambda >0$ and $t \ge 0$ we will assume that $X$ is normalised so that
\begin{equation}\label{eqn:Levy process Laplace transform}
\E{ \exp \{- \lambda X_t \}} = \exp \{-t \lambda^\alpha \}.
\end{equation}
(See \cite[Sections VII and VIII]{BertoinLevy} for more background.)

We will write $\psi(\lambda) = \lambda^{\alpha}$ to denote this Laplace exponent. The function $\psi$ is known as the \textbf{branching mechanism}. By the L\'evy--Khinchin formula, $\psi$ can equivalently be written in the form
\begin{equation}\label{eqn:psi Levy Khinchin}
\psi ( \lambda) = \int_0^{\infty} (e^{-\lambda r} -1 + \lambda r) \pi (dr) = C_{\alpha}\lambda + \int_0^{\infty} (e^{-\lambda r} -1 + \lambda r \mathbbm{1}\{y \leq 1\}) \pi (dr),
\end{equation}
where $\pi$ is the \textbf{jump measure} of $X$ and $-C_{\alpha}$ is the \textbf{drift coefficient}. The fact that $\psi(\lambda) = \lambda^{\alpha}$ entails that 
\begin{equation}\label{eqn:pi and C def}
C_{\alpha}=\frac{\alpha-1}{\Gamma (2-\alpha)} \quad \text{ and } \quad \pi (dr) = \alpha C_{\alpha} r^{-\alpha-1} dr.
\end{equation}
In standard terminology, this means that the process $X$ corresponds to the \textbf{\Levy triple} $(0, -C_{\alpha}, \pi)$.

To define the stable tree, we will in fact use a normalised L\'evy excursion rather than a L\'evy process. Let $X$ be a spectrally positive $\alpha$-stable L\'evy process, satisfying \eqref{eqn:Levy process Laplace transform}, and let $I_t = \inf_{s \in [0,t]} X_s$ denote its running infimum process. Define $g_1$ and $d_1$ by
\begin{align*}
g_1 &= \sup \{ s \leq 1: X_s = I_s \} \\
d_1 &= \inf \{ s > 1: X_s = I_s \}.
\end{align*}

Note that $X_{g_1} = X_{d_1}$ almost surely since $X$ almost surely has no jump at time $g_1$ and no negative jumps. We define the normalised excursion $X^{\text{exc}}$ of $X$ above its infimum at time $1$ by
\[
X_s^{\text{exc}} = (d_1 - g_1)^{\frac{-1}{\alpha}} (X_{g_1 + s(d_1 - g_1)} - X_{g_1})
\]
for every $s \in [0,1]$. Then $X^{\text{exc}}$ is almost surely an $\alpha$-stable \cadlag function on $[0,1]$ with $X^{\text{exc}}(s)>0$ for all $s \in (0,1)$, and $X_0^{\text{exc}}=X_1^{\text{exc}}=0$.

We also take this opportunity to introduce the \Ito excursion measure. For full details, see \cite[Chapter IV]{BertoinLevy}, but the measure is defined by applying excursion theory to the process $X - I$, which is strongly Markov and for which the point $0$ is regular for itself. We normalise local time so that $-I$ denotes the local time of $X - I$ at its infimum, and let $(g_j, d_j)_{j \in \mathcal{I}}$ denote the excursion intervals of $X - I$ away from zero. For each $i \in \mathcal{I}$, the process $(e^i)_{0 \leq s \leq d_i-g_i}$ defined by $e^i(s) = X_{g_i + s} - X_{g_i}$ is an element of the excursion space
\[
E = \bigcup_{l > 0} D^{\text{exc}}([0,l], \R^{\geq 0}),
\]
where for every $l >0$, $D^{\text{exc}}([0,l], \R^{\geq 0})$ is the space of all \emph{\cadlag}nonnegative functions $f$ such that $f(0)=f(l)=0$.
We let $\sigma (e) = \sup \{s>0: e(s)>0\}$ denote the \textbf{lifetime} (duration) of the excursion $e$. It was shown in \cite{ItoPP} that the measure on $\mathbb R_+ \times E$
\[
\sum_{i \in \mathcal{I}} \delta (-I_{g_i}, e^i)
\]
is a Poisson point measure of intensity $dt N(de)$, where $N$ is a $\sigma$-finite measure on the set $E$ known as the \textbf{\Ito excursion measure}.

The measure $N(\cdot)$ inherits a scaling property from that of $X$. For any $\lambda > 0$ we define a mapping $\Phi_{\lambda}: E \rightarrow E$ by  $\Phi_{\lambda}(e)(t) = \lambda^{\frac{1}{\alpha}} e(\frac{t}{\lambda})$, so that $N \circ \Phi_{\lambda}^{-1} = \lambda^{\frac{1}{\alpha}} N$ (e.g. see \cite{WataIto}). It follows from the results of \cite[Section IV.4]{BertoinLevy} that we can uniquely define a set of measures $(\Ns, s>0)$ on $E$ such that:
\begin{enumerate}[(i)]
\item For every $s > 0$, $\Ns( \sigma=s)=1$.
\item For every $\lambda > 0$ and every $s>0$, $\Phi_{\lambda}(\Ns) = N^{(\lambda s)}$.
\item For every measurable $A \subset E$,
\begin{equation}\label{eqn:Ito measure integrate s}
N(A) = \int_0^{\infty} \frac{\Ns(A)}{\alpha \Gamma(1 - \frac{1}{\alpha}) s^{\frac{1}{\alpha}+1}} ds.
\end{equation}
\end{enumerate}

$\Ns$ is thus used to denote the law $N( \cdot | \sigma = s)$. The probability distribution $\No$ coincides with the law of $\X$ constructed above. Note that (see \cite[Proposition 5.6]{goldschmidt2010behavior}), for all $s \geq 0$,
\begin{equation}\label{eqn:N lifetime tails}
N( \sigma > s) = c s^{-\frac{1}{\alpha}}.
\end{equation}

\subsubsection{Stable height processes and exploration processes}

We will see later in Section \ref{section:discrete-setup} that discrete plane trees can be coded by different functions. The same is true in the continuum; in the case of $\alpha$-stable trees, we first code using a spectrally positive $\alpha$-stable \Levy process or excursion $X$ (this plays the same role as the \L ukasiewicz path in the discrete setting - we will define this in Section \ref{sctn:discrete tree defs}). The \textbf{height function} $H$ is then defined by setting, for $t \geq 0$,
\begin{equation}\label{eqn:height def}
H_t = \lim_{\epsilon \rightarrow 0} \frac{1}{\epsilon} \int_0^t \mathbbm{1} \{X_s < I_s^t + \epsilon \} ds,
\end{equation}
where $I_{s,t} = \inf_{s \leq r \leq t} X_r$. For each $t \geq 0$, this limit exists in probability by \cite[Lemma 1.2.1]{LeGDuqMono}. Moreover, it follows from \cite[Theorem 1.4.3]{LeGDuqMono} that $H$ almost surely has a continuous modification, and we will assume henceforth that $H$ is indeed continuous.

The height function is not in general Markovian; to overcome this difficulty we introduce the exploration process, a Markovian random measure that encodes the height process. Let $M_f(\R_+)$ denote the set of finite measures on $\R_+$. The \textbf{exploration process} $(\rho_t)_{t\geq 0}$ is a process such that for each $t \geq 0$, $\rho_t$ is the measure in $M_f(\R_+)$ satisfying
\begin{align}\label{def:exploration process}
\langle\rho_t, f\rangle = \int_0^t f(H_s) d_s (I_{s,t})
\end{align}
for all bounded measurable functions $f:\R_+ \to \R$. This implies that
\begin{align}\label{eqn:rhot 1}
\langle\rho_t, 1\rangle = I_{t,t} - I_{0,t} = X_t - I_t.
\end{align}

It follows from the definition that for each $t \geq 0$, $\text{supp } \rho_t = [0,H_t]$. In particular this means that $H$ can be defined as a functional of $\rho$; for a given realisation $\rho'$ we denote the associated height process by $H(\rho')$.

We also introduce the dual exploration process. Note that it follows from \eqref{def:exploration process} that $\rho_t$ can also be defined by
\begin{align*}
\rho_t(dr) = \sum_{0 < s \leq t: X_{s-} \leq I_{s,t}} (I_{s,t}-X_{s-})  \delta_{H_s}(dr)
\end{align*}
for all $t>0, r \in [0,H_t]$. The \textbf{dual exploration process} is similarly defined by
\begin{align}\label{eqn:dual exp process}
{\hat{\rho}}_t(dr) = \sum_{0 < s \leq t: X_{s-} \leq I_{s,t}} (X_{s} - I_{s,t})  \delta_{H_s}(dr).
\end{align}

The height function appearing in \eqref{eqn:height def} in fact codes a whole forest of trees. We will sometimes instead code a single tree by replacing $X$ with $\X$ in \eqref{eqn:height def}. This corresponds to the law of $H$ under the measure $\No$. By scaling, this means that we can also make sense of it under $\Ns$ for any $s>0$, and therefore under $N$ using \eqref{eqn:Ito measure integrate s}. The pair $(\rho_t, {\hat{\rho}}_t)_{t\geq 0}$ can be similarly defined under $\No$ or $N$. Under $N$, it holds that (recall that $\sigma$ denotes the lifetime of the excursion)
\begin{equation} \label{eq:reverserho}
(\rho_t, {\hat{\rho}}_t)_{t\geq 0} \overset{(d)}{=} ({\hat{\rho}}_{(\sigma - t)-}, \rho_{(\sigma - t)-})_{t\geq 0}.
\end{equation}

\subsubsection{Stable trees}\label{sctn:stable tree def}

To define the stable tree under $\No$, we first define $H$ using \eqref{eqn:height def} but with $X$ replaced by $\X$, then for $s<t$ define
\begin{equation}\label{eqn:min st def}
m_{s,t} =  \inf_{s \wedge t \leq r \leq s \vee t} H_r.
\end{equation}
We then define a pseudodistance on $[0,1]$ by
\begin{equation}\label{eqn:distance from height}
\dt(s,t) = H_s + H_t - 2m_{s,t}
\end{equation}
whenever $s \leq t$. We then define an equivalence relation on $[0,1]$ by saying $s \sim t$ if and only if $\dt(s,t) = 0$, and set $\Ta$ to be the quotient space $([0,1]/ \sim, \dt)$. We also define a measure $\lambda$ on $\Ta$ as the image of Lebesgue measure on $[0,1]$ under the quotient operation. 

It follows from the construction that $\Ta$ is an $\R$-tree in the sense of \cite{dress1996t}, essentially meaning that it is a continuum object sharing standard properties of discrete trees (see \cite[Definition 2.1]{DLG05} for the precise definition).

Properties of $\Ta$ are determined by the process $\X$. Letting $p: [0, 1] \rightarrow \Ta$ be the canonical projection, there is a distinguished vertex $\rho = p(0)$ which we call the root of $\Ta$. Also, for $u, v \in \Ta$, we denote by $[[u,v]]$ the unique geodesic between $u$ and $v$, and we say $u \preceq v$ if and only if $u \in [[\rho, v]]$. Given $u, v \in \Ta$, we say $z$ is the \textbf{most recent common ancestor} of $u$ and $v$, written $z = u \wedge v$, if $z$ is the unique element in $\Ta$ with $[[\rho, u]] \cap [[\rho, v]] = [[\rho, z]]$. We say that a subtree is \textbf{grafted to the path} $[[u,v]]$ if it is coded by an excursion interval of $\X$ away from its infimum, and its root (i.e. the projection of the start point of this excursion) is in $[[u,v]]\setminus \{u,v\}$.

The relation $\preceq$ on $\Ta$ can be recovered from $\X$ by defining
\[
s \preceq t \text{ if and only if } s \leq t \text{ and } \X_{s^-} \leq I_s^t.
\]
It is possible to show that the two definitions of $\preceq$ given above define partial orders on $\Ta$ and $[0,1]$ respectively, and are compatible with $p$ in the sense that, if $u, v \in \Ta$, then $u \preceq v$ if and only if there exist $s, t \in [0, 1]$ with $p(s)=u$, $p(t)=v$ and such that $s \preceq t$. Additionally, if $s \wedge t$ is the most recent common ancestor of $s$ and $t$ (with respect to $\preceq$), then it can be verified that $p(s \wedge t) = p(s) \wedge p(t)$.

The \textbf{multiplicity} of a vertex $u \in \Ta$ is defined as the number of connected components of $\Ta \setminus \{u\}$. A vertex $u$ is called a \textbf{leaf} if it has multiplicity one, and a \textbf{branch point} if it has multiplicity at least three. It can be shown that $\Ta$ only has branch points with infinite multiplicity, called \textbf{hubs}, and that $u \in \Ta$ is a hub if and only if there exists $s \in [0, 1]$ such that $p(s)=u$ and $\Delta_s := \X_s - \X_{s^-}>0$. See for instance Theorem 4.6 and Theorem 4.7 of \cite{DLG05}. The quantity $\Delta_s$ gives a measure of the number of children of $u$. 

For any fixed $t \geq 0$, the quantity appearing in \eqref{eqn:rhot 1} can be interpreted as the sum of the sizes of the hubs appearing on the right hand side of the branch from the root to $p(t)$. Similarly $\langle{\hat{\rho}}_t, 1\rangle$ can be interpreted as the sum of the sizes of the hubs appearing on the left hand side of the branch from the root to $p(t)$.

We can similarly define $\Ta$ under $N$, rather than $\No$, by replacing $\X$ with an excursion defined under $N$ in the definition of $H$, or by using \eqref{eqn:Ito measure integrate s}. We can similarly define a forest of trees by retaining $X$ in \eqref{eqn:height def} (each excursion of $X-I$ above its infimum then codes a single tree). It is also a well-known fact that the genealogy of $\Ta$ encodes a continuous-state branching process with branching mechanism $\psi$, but we will not use this connection and refer to \cite{LeGDuqMono} for further background.

\subsection{Stable snake with Brownian spatial displacements}

\subsubsection{Definition} \label{section:stablesnake}
We now give a brief introduction to \Levy snakes in the particular case of snakes on stable trees with Brownian spatial displacements. We will often simply refer to this as a ``stable snake" in what follows. A full introduction in the more general setting of \Levy trees is given in \cite[Section 4]{LeGDuqMono}; we refer there for further background.

Informally, a stable snake is simply a real-valued stochastic process indexed by a stable tree. Just as we had to introduce the exploration process as a Markovian version of the height process, it will be more convenient to keep track of the history of this stochastic process and we therefore instead consider a path-valued process. In particular, we let $\W$ denote the set of continuous real-valued functions defined on a compact interval of the form $[0, \zeta]$ for some $\zeta>0$. Given $w \in \W$, we denote its domain by $[0, \zeta_w]$ and set $\widehat{w} = w(\zeta_w)$. We call $\zeta_w$ the \textbf{lifetime} of $w$ and call $\widehat{w}$ the \textbf{tip} of $w$.

We then define a topology on $\W$ using the distance function
\[
d_{\W}(w, w') = |\zeta_w - \zeta_{w'}| + \sup_{r \geq 0} |w(r \wedge \zeta_{w}) - w'(r \wedge \zeta_{w'})|.
\]
Before defining the stable snake, we first define a snake with Brownian spatial displacements driven by a deterministic continuous function $h:\R_+ \to \R_+$ with initial condition $w_0: [0, h(0)] \to \R_+$. This is a random variable taking values in $\W$, or in other words in the space of path-valued processes (one should have in mind that $h$ plays the role of a height function of a deterministic forest, and the snake is obtained by adding random spatial displacements).

For every $x \in \mathbb R$, we denote by $\Pi_x$ the distribution of a standard linear Brownian motion $\xi = (\xi_t)_{t \geq 0}$ started at $x$. It is shown in \cite[Section 4.1.1]{LeGDuqMono} that for any such choice of $(h, w_0)$ there exists a unique probability measure on $\W$ such that the following three properties hold.  
\begin{enumerate}
\item $\zeta_{W_s} = h(s)$ for all $s \in [0, \sigma]$.
\item \label{item:snake-consistency} For all $s, s' \geq 0$, we have that $W_s(t) = W_{s'}(t)$ for all $t \leq m_{s',s}$.
\item Conditionally on $W_{s'}$, the path $W_s:[0, \zeta_s] \to \R$ is such that $(W_s(m_{s',s}+t) - W_s(m_{s',s}))_{t\in [\zeta_s - m_{s',s}]}$ has the same law as the Brownian motion $(\xi_t)_{t\in [0,\zeta_s - m_{s',s}]}$ under $\Pi_{W_{s'}(m_{s',s})}$.
\end{enumerate}

In other words, conditionally on $W_{s'}$, the path $W_s$ satisfies the second point for $t \leq m_{s, s'}$ (which informally corresponds to the snake along the path from the root to the most recent common ancestor of $p(s)$ and $p(s')$), and for $t > m_{s, s'}$ evolves as an independent linear Brownian motion (which corresponds to the rest of the branch to $p(s)$). Given $h$ and $w_0$ as above we let $Q_{w_0}^{h}$ denote the law of this process.

The \textbf{stable snake with Brownian spatial displacements} is the pair $(\rho, W)$ obtained by replacing $h$ with the random height function $H$ defined from a \Levy process $X$ as in \eqref{eqn:height def}. 

We will also use the notation
\begin{equation} \label{eq:tipDef}
\Wt_s = W_s(\zeta_s)
\end{equation}
to denote the tip of the snake at time $s$. 

We will sometimes refer to this construction as simply the ``the snake" or ``the stable snake" in this article.

\subsubsection{Excursion measures for snakes}

We will need to consider the law of a snake where the exploration process $\rho$ is first sampled under $N$ or $\No$, and with initial condition $W_0(0) = x \in \R$. Informally, under $\No$ this is the law of a stable snake on a single stable tree of total mass $1$, started from the point $x$. Under $N$ this is the law of a stable snake on a stable tree sampled according to the It\^o measure, started from the point $x$. We denote these excursion measures for snakes by $\N_x$ and $\Noo_x$, so that
\begin{align}\label{eqn:Ito snake def}
\N_x (d \rho, d W) = N_0 (d \rho) Q_x^{H(\rho)}(dW).
\end{align}
$\Noo_x$ is defined similarly by replacing $N$ with $\No$ above. It is explained in \cite[Section 2.4]{riera2022structure} that the point $(0,x)$ is regular and recurrent for the process $(\rho, W)$ (which is actually Markov), and moreover that the process $(-I_t)_{t\geq 0}$ is a local time at $0$ for this process, and hence this definition makes sense as the excursion measure of $(\rho, W)$ away from $(0,x)$ associated with the local time $-I$.

By excursion theory, it moreover follows that, if $(\alpha_i, \beta_i)_{i \in \mathcal{I}}$ denote the excursion intervals of $(\rho, W)$ away from $(0,x)$, with corresponding subtrajectories $(\rho^{(i)}, W^{(i)})$, then under $\Pb_x$, the measure
\[
\sum_{i \in \mathcal{I}} \delta (-I_{\alpha_i}, \rho^{(i)}, W^{(i)})
\]
is a Poisson point measure with intensity $\mathbbm{1}_{[0, \infty)}(\ell) d \ell \N_x(d \rho, d W)$ (where $d \ell$ denotes Lebesgue measure).

\bigskip

Note that, under $\mathbb N_x$, the function $(\Wt_s)_{s \in [0, \sigma]}$ can also be viewed as a function of the tree $\Ta$ coded by the height function $H(\rho)$ driving the snake that appears in \eqref{eqn:Ito snake def}. We will follow the convention of \cite{LeGall2007BrownianMapTopological} and denote this function by $(Z_a)_{a \in \Ta}$, so that $Z_a = \Wt_t$ for any $a \in \Ta$ such that $a = p(t)$. We also will sometimes abuse notation and interchangeably use $Z_a$, $Z_t$ and $\Wt_t$.

We denote by $\mathcal{R}$ the \textbf{range} of the stable snake:
\begin{equation} \label{eq:rangeDef}
\mathcal R := \left[ \inf_{a \in \Ta} Z_a, \sup_{a \in \Ta} Z_a \right].
\end{equation}

\bigskip

The measure $\N_x$ also inherits a scaling property from that of $N(\cdot)$ and the Brownian scaling property. For any $\lambda > 0$ we define a mapping $\Phi_{\lambda}: (\rho, W) \to (\rho^{\lambda}, W^{\lambda})$ by 
\begin{align}\label{eqn:Ito scaling}
\rho^{\lambda}_t (dr) = \lambda^{\frac{1}{\alpha}} \rho_{t/\lambda} (d(r/\lambda^{1-\frac{1}{\alpha}})), \qquad W_t^{\lambda}(s) = \lambda^{\frac{1}{2}\left(1-\frac{1}{\alpha}\right)} W_{t/\lambda}(s/\lambda^{1-\frac{1}{\alpha}}),
\end{align}
so that the law of $(\rho^{\lambda}, W^{\lambda})$ under $\N_x$ is equal to $\lambda^{1/\alpha}\N_{\lambda^{\frac{1}{2}\left(1-\frac{1}{\alpha}\right)}x}$.

Note that we can use \eqref{eqn:Ito measure integrate s} and \eqref{eqn:Ito scaling} to calculate that 
\begin{align}\label{eqn:Laplace sigma}
N \left( 1 - e^{-\lambda \sigma} \right) = \lambda^{1/ \alpha}.
\end{align}

We also briefly recall a few useful results. The first is the following from \cite[Equation (38)]{archer2024snakes} (recall the definition of the range from \eqref{eq:rangeDef}):
\begin{equation}\label{eq:rangesnake}
\mathbb N_x \left[ \mathbbm{1}\{0 \in \mathcal R \} \right] = \left( \frac{\alpha + 1}{(\alpha - 1)^2} \, \frac{1}{x^2} \right)^{\frac{1}{\alpha -1}},
\end{equation}
The second concerns the H\"older continuity of the stable snake:

\begin{lemma}\textnormal{(Lemma 6.1 in~\cite{archer2024snakes}).}
\begin{enumerate}[(a)]
\item $\No$ almost surely, for any $\gamma< 1-\frac{1}{\alpha}$, the height function is H\"older continuous with parameter $\gamma$.
\item Take any $b \in (0, 1/2)$. Then $\Noo$ a.e. there exists $\epsilon_0> 0$ such that for every $s,t \geq 0$ with $d(s,t) \leq \epsilon_0$,
\[
|\Wt_t-\Wt_s| \leq d(s,t)^b.
\]
\end{enumerate}
\label{lem:height holder}
\end{lemma}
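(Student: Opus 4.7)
Both parts are continuity estimates of Kolmogorov type, and the argument splits cleanly along them.

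For (a), I would apply Kolmogorov's continuity criterion directly to the height process $H$ under $\No$. The required input is the moment estimate
\[
\E{|H_t - H_s|^p} \leq C_p \, |t-s|^{p(1-1/\alpha)}
\]
valid for all sufficiently large $p$. This moment bound is classical in the theory of stable height processes; it can be obtained from the exploration-process representation \eqref{def:exploration process} of $H$ together with moment controls for the spectrally positive $\alpha$-stable L\'evy process $X$ via the arguments in \cite[Chapter~1]{LeGDuqMono}. Kolmogorov's criterion then produces a continuous modification with H\"older exponent up to $(p(1-1/\alpha)-1)/p$, which can be made arbitrarily close to $1-1/\alpha$ by taking $p$ large; intersecting over a countable sequence of exponents approaching $1-1/\alpha$ from below gives (a).

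For (b), the strategy is to work conditionally on the tree (equivalently on $H$) and exploit the conditional Gaussianity of the snake. Splitting $s,t$ at their most recent common ancestor $p(u)$ with $H_u = m_{s,t}$ and using the branching structure of the snake, one checks that $\Wt_s - Z_{p(u)}$ and $\Wt_t - Z_{p(u)}$ are independent centred Gaussians conditionally on $H$, with variances $\dt(s,u)$ and $\dt(t,u)$ respectively, so that
\[
\E{|\Wt_s - \Wt_t|^{2p} \,\middle\vert\, H} = C_p \, \dt(s,t)^p \qquad \text{for every integer } p \geq 1.
\]
The goal is to upgrade this pointwise estimate to a uniform H\"older-in-$\dt$ bound with any exponent $b < 1/2$.

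To do this I would combine a Gaussian chaining (Dudley-type) argument with a covering-number bound for $(\Ta, \dt)$. The covering bound $N_{\dt}(\Ta, \varepsilon) \leq C_\omega \, \varepsilon^{-\beta}$ for some finite $\beta$ (a.s.) is the crucial ingredient; such a bound follows from the H\"older regularity of $H$ proved in part (a), since an $\varepsilon$-subdivision of $[0,1]$ on which $H$ varies by at most a power of $\varepsilon$ induces a polynomial-size $\dt$-cover of $\Ta$. Feeding this covering estimate into Gaussian concentration applied to dyadic $\dt$-nets $T_n$ of mesh $2^{-n}$ (with $|T_n| \leq C 2^{n\beta}$), and applying Borel--Cantelli in $n$, yields
\[
\sup_{\dt(a,b) \leq \varepsilon} |Z_a - Z_b| \leq C_\omega \sqrt{\varepsilon \log(1/\varepsilon)} \quad \text{for all sufficiently small } \varepsilon,
\]
which implies the desired H\"older bound with any exponent $b < 1/2$. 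The main technical obstacle is precisely the polynomial covering bound for $(\Ta, \dt)$: making it quantitative enough to feed into Dudley's chaining requires more than the bare Hausdorff-dimension statement, but it can be set up efficiently using the H\"older regularity from (a) applied to the excursions of $H$ above its running minimum. Everything else is routine Gaussian concentration.
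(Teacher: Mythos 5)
This lemma is not proved in the paper at all: it is imported verbatim as Lemma 6.1 of \cite{archer2024snakes}, so there is no internal argument to compare against. Your reconstruction follows the standard route and is sound in outline. For (a), the moment bound $\E{|H_t-H_s|^p}\leq C_p|t-s|^{p(1-1/\alpha)}$ plus Kolmogorov is exactly how H\"older continuity of the stable height process is obtained in \cite{LeGDuqMono}; the one point you gloss over is that these moment estimates are proved under the law of the unconditioned process (or under $N$), not under $\No$, so you need a transfer step to the normalised excursion (scaling together with conditioning on the lifetime, or an absolute-continuity relation on $[0,1-\delta]$ plus time reversal) before Kolmogorov's criterion applies as stated. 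For (b), your key observations are correct: conditionally on $H$ the labels form a centred Gaussian field with $\mathrm{Var}(Z_a-Z_b)=\dt(a,b)$, and the H\"older regularity of $H$ from (a) turns a Euclidean $\eta$-grid of $[0,1]$ into a $\dt$-cover of $\Ta$ of radius of order $\eta^{\gamma}$, whence polynomial covering numbers $N_{\dt}(\varepsilon)\leq C_\omega\varepsilon^{-1/\gamma}$. Chaining then gives a quenched modulus of order $\sqrt{\varepsilon\log(1/\varepsilon)}$, which is what is needed; note only that the chaining must be run in the canonical metric $\sqrt{\dt}$ (the covering numbers remain polynomial, so nothing is lost), and that the constant $C_\omega$ in the entropy bound is random, so the Borel--Cantelli step should be carried out conditionally on the tree, restricting to events $\{C_\omega\leq K\}$ and letting $K\to\infty$. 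Your remark that H\"older continuity of $\Wt$ in the \emph{time} variable would not suffice is exactly right, since $\dt(s,t)$ can be much smaller than any power of $|t-s|$; working with the tree-indexed Gaussian field, as you do, is the correct way to get the bound in terms of $d(s,t)$.
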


The third concerns the location of the minimum of the snake over the interval $[0,\sigma]$, which is denoted by $s^*$.

\begin{proposition}[Proposition 2.8 in~\cite{archer2024snakes}]
\label{prop:minimum unique}
$\N$-almost everywhere, the time $s^* \in [0,\sigma]$ at which $\Wt$ attains its minimum is unique.
\end{proposition}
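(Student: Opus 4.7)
The strategy is to decompose the argument into two ingredients: first, that the minimum of $\Wt$ is $\N$-a.e.\ attained at a leaf of $\Ta$, and second, that this minimizing leaf is $\N$-a.e.\ unique. Together these yield uniqueness of $s^*$, because a leaf of $\Ta$ is visited by the canonical projection $p:[0,\sigma]\to\Ta$ at a unique time, whereas hubs and interior edge points are visited multiple times.

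For the first ingredient, I would argue that if $a \in \Ta$ is not a leaf, then $a$ is either a hub or an interior point of a skeleton edge. In either case there is at least one subtree or edge-segment emanating from $a$ on which $Z - Z_a$ behaves as a standard Brownian motion starting from $0$; since Brownian motion takes strictly negative values in every neighbourhood of $0$, this forces $Z$ to go strictly below $Z_a$ in every neighbourhood of $a$ in $\Ta$. Hence $a$ cannot be a local minimizer, so the global minimizer $v^*$ must be a leaf.

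For the second ingredient, I would work conditionally on $\Ta$ and exploit the Gaussian structure of $(Z_a)_{a\in\Ta}$, which is a continuous centered field with $\mathrm{Var}(Z_a - Z_b) = \dt(a,b) > 0$ whenever $a \neq b$. Suppose, for contradiction, that two distinct leaves $a_1 \neq a_2$ both attain the minimum, and let $b = a_1 \wedge a_2$. Since $a_1$ and $a_2$ are leaves, $b$ can equal neither of them, so $b$ must be a hub of $\Ta$, and the paths $[[b,a_1]]$ and $[[b,a_2]]$ lie in distinct subtrees $S_1, S_2$ grafted at $b$. By the branching property of the \Levy snake (Section~4 of \cite{LeGDuqMono}), conditionally on the tree and on $Z_b$, the restrictions of $Z - Z_b$ to $S_1$ and $S_2$ are independent centered Gaussian processes starting from $0$ at $b$. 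Consequently $\inf_{v \in S_1} Z_v$ and $\inf_{v \in S_2} Z_v$ are, given the conditioning, independent random variables with continuous marginals, and the probability they coincide is zero.

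To upgrade this to a statement over all (random) pairs of minimizing leaves, I would enumerate the hubs of $\Ta$ (which form a countable set, since the jumps of $\X$ of size at least $\eps$ are finite in number for each $\eps>0$) and, at each hub, enumerate its countably many grafted subtrees. For each hub $b$ and each pair of distinct subtrees $(S,S')$ grafted at $b$, the event $\{\inf_S Z = \inf_{S'} Z\}$ has conditional probability zero by the previous paragraph; the countable union of these events still has probability zero. Combined with the first ingredient, this yields $\N$-a.e.\ uniqueness of $v^*$, hence of $s^*$. The main technical obstacle is the rigorous application of the branching decomposition at hubs of infinite multiplicity under the $\sigma$-finite measure $\N$, which requires tracking the conditional structure of the snake through the spinal decomposition of Duquesne and Le Gall.
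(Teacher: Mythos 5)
Your two-step plan --- (i) the minimiser of $Z$ is a leaf, and (ii) two distinct minimising leaves would force a tie between conditionally independent subtree infima at their most recent common ancestor, which is a hub --- is a legitimate strategy (note the present paper only imports the proposition from \cite{archer2024snakes}, so the comparison is with what a complete argument requires). Step (ii) is essentially sound up to routine repairs: hubs and the components of $\Ta\setminus\{b\}$ form countable, tree-measurable families, so the union bound is legitimate; the $\sigma$-finiteness of $\N_0$ is handled by restricting to $\{\sigma>\epsilon\}$; and the asserted atomlessness of the conditional law of $\inf_{S}(Z-Z_b)$ does need justification (e.g.\ the general result on the law of the supremum of a bounded non-degenerate Gaussian process, or a further branch decomposition inside $S$), but this is fixable.

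The genuine gap is step (i). Your argument there is a fixed-point argument: for a \emph{given} non-leaf point $a$, some branch emanating from $a$ carries a Brownian motion started at $Z_a$, which dips below $Z_a$ immediately. This cannot be applied simultaneously to the uncountably many skeleton points (it is fine for the countably many hubs), and, more seriously, the mechanism you invoke is absent exactly at the points that matter: a non-hub skeleton point has multiplicity two, so no subtree is grafted at $a$ itself, and if $a$ is a point of local minimum of the label process run along the edge containing it (such points exist almost surely, since along any edge the labels form a Brownian motion), then \emph{both} edge directions stay above $Z_a$ throughout a neighbourhood of $a$, so your dichotomy gives nothing. To rule out such points one must first reduce to a countable family of spines (the skeleton is covered by the branches $[[\rho,p(q)]]$, $q$ rational) and then use the subtrees grafted at points arbitrarily close to $a$, showing that some of them carry labels strictly below the local minimum value. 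This is a quantitative, scale-by-scale estimate: by the spinal decomposition (Proposition \ref{prop:firstmomentrange ds}), the subtrees grafted within height-distance $\delta$ of the argmin of the spine Brownian motion have total hub mass of order $\delta^{1/(\alpha-1)}$, each unit of which produces a label dip exceeding $x$ at rate of order $x^{-2/(\alpha-1)}$ by \eqref{eq:rangesnake}, while the label gap to beat is of order $\delta^{1/2}$; the resulting Poisson parameter per dyadic scale is of order one (up to corrections), and a Borel--Cantelli argument over the conditionally independent scales is what actually forces the minimiser off the skeleton. This extra argument --- of the same flavour as the ``no common increase points'' theorem of the cited paper --- is missing from your proposal, and without it neither ``the minimum is attained at a leaf'' nor, consequently, the uniqueness of $s^*$ is established.
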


We also take this opportunity to define the \textbf{occupation measure of the snake re-rooted at its minimum} as the random measure $\overline{\mathcal I}$ satisfying
\begin{equation}\label{eqn:ISE def rerooted}
\langle \overline{\mathcal I} , f \rangle = \int_0^{\sigma} f(\Wt_s - \Wt_{s^*}) ds.
\end{equation}

\subsubsection{Uniform re-rooting invariance}

We will use a property of stable snakes know as \textbf{uniform re-rooting invariance}. To define this under $\N_0$ for an excursion with lifetime $\sigma>0$, we follow the presentation of \cite[Section 2.3]{legall-weill} for the Brownian case. We first need to define the notion of a stable tree and of a snake rooted at a time point $s \in [0, \sigma]$. To this end, given $\sigma>0$ we first set
\begin{equation*}
s \oplus r = \begin{cases}
s+r &\text{ if } s+r \leq \sigma, \\
s+r - \sigma &\text{ if } s+r > \sigma.
\end{cases}
\end{equation*}
We then define the re-rooted lifetime and snake-tip processes by
\begin{align}\label{eqn:rerooted height and snake tip}
\zetas_r &= \zeta_s + \zeta_{s \oplus r} - 2\inf_{u \in [s \wedge (s \oplus r), s \vee (s \oplus r)]} \zeta_u \qquad \text{ and } \qquad \Wts_r = \Wt_{s \oplus r} - \Wt_s
\end{align}
for all $r \in [0, \sigma]$. Note that the entire sequence of snake trajectories can be recovered directly from \eqref{eqn:rerooted height and snake tip} via
\begin{align*}
\Ws_r (t) = \Wts_{\sup\{u \leq r: \zetas_u=t\}},
\end{align*}
or equivalently \eqref{eqn:rerooted height and snake tip} does indeed suffice to define the entire re-rooted process.

Informally, $(\zetas_r)_{r \geq 0}$ codes a stable tree rooted at the vertex previously labelled $s$, and $\Wts_r$ codes the spatial displacements along the branches of the new tree from the new root to the vertex which now has label $r$.

The re-rooting invariance statement is as follows.
\begin{proposition}[Proposition 2.1 in~\cite{archer2024snakes}]
\label{prop:uniform rerooting}
For every non-negative function $F$ on $\R_+ \times C(\R_+, \W)$ it holds that
\[
\N_0 \left( \int_0^{\sigma} F(s, \Ws) ds \right) = \N_0 \left( \int_0^{\sigma} F(s, W) ds \right).
\]
\end{proposition}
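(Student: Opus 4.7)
The plan is to decouple the two sources of randomness driving the stable snake---the height function $H$ which codes the underlying tree, and the Brownian spatial labels along it---and to handle them separately. Concretely, I would first establish uniform re-rooting invariance at the level of the height process, and then check that, conditional on $H$, the re-rooted snake $\Ws$ is again distributed as a Brownian snake, now driven by the re-rooted height function $\zetas$ and started from $0$.

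\textbf{Step 1: re-rooting invariance of $H$.} For the height process alone I would invoke the now-classical result of Duquesne (extending Aldous's uniform re-rooting invariance of the Brownian CRT to the stable setting) asserting that, for every non-negative measurable $G$,
\[
\No \!\left( \int_0^1 G(s,\zetas) \, ds \right) = \No \!\left( \int_0^1 G(s,H) \, ds \right),
\]
where $\zetas$ is defined as in \eqref{eqn:rerooted height and snake tip} with $\zeta_u$ taken to be $H_u$. Combining this with the scaling property of $\X$ and the disintegration formula \eqref{eqn:Ito measure integrate s} upgrades the identity to the $\sigma$-finite measure $N$ with $\int_0^1$ replaced by $\int_0^\sigma$.

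\textbf{Step 2: snake structure on the re-rooted tree.} Conditionally on $H$, the tip process $(\Wt_t)_{t\in[0,\sigma]}$ is the unique centred Gaussian process on the tree $\Ta$ coded by $H$, with $\Wt_0=0$ and $\mathrm{Var}(\Wt_t-\Wt_{t'})=\dt(p(t),p(t'))$, and this characterisation uniquely determines the law of the full path-valued snake $W$. I would verify that $\Wts$ satisfies the analogous characterisation on the tree coded by $\zetas$: clearly $\Wts_0=0$, and
\[
\mathrm{Var}(\Wts_r-\Wts_{r'}) = \mathrm{Var}(\Wt_{s\oplus r}-\Wt_{s\oplus r'}) = \dt(p(s\oplus r),p(s\oplus r')),
\]
which equals the distance between the corresponding vertices as measured in the re-rooted tree, since the metric on $\Ta$ as a set is unaffected by the choice of root. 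Recovering the path-valued snake from its tip via the consistency property \ref{item:snake-consistency}, one concludes that conditionally on $\zetas$ the process $\Ws$ has the law of a Brownian snake on the re-rooted tree with initial condition $0$.

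\textbf{Step 3 and main obstacle.} Writing $\Phi(s,h)=\int F(s,w)\, Q_0^h(dw)$ and using $\N_0(d\rho,dW)=N(d\rho)\,Q_0^{H(\rho)}(dW)$ from \eqref{eqn:Ito snake def}, Step 2 gives $\N_0(\int_0^\sigma F(s,\Ws)\,ds) = N(\int_0^\sigma \Phi(s,\zetas)\,ds)$; Step 1 then rewrites this as $N(\int_0^\sigma \Phi(s,H)\,ds) = \N_0(\int_0^\sigma F(s,W)\,ds)$, which is the desired identity. The main obstacle is the tree-combinatorial identification underlying Step 2, namely that the formula $\zetas_r+\zetas_{r'}-2\inf_{u\in[r\wedge r',r\vee r']}\zetas_u$ does indeed recover the distance $\dt(p(s\oplus r),p(s\oplus r'))$ in the re-rooted tree. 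This is a deterministic statement about how lexicographical intervals and minima transform under the re-rooting surgery, slightly more delicate than in the Brownian case due to the infinite-multiplicity hubs of $\Ta$, but it requires no new probabilistic input.
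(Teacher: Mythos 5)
Your proposal is correct and follows essentially the same route as the proof this paper relies on: the statement is imported from \cite{archer2024snakes}, where it is obtained in the same way, by combining the uniform re-rooting invariance of the stable tree/height process (due to Duquesne and Le Gall) with the observation that, conditionally on the tree, the labels form a centred Gaussian field whose law depends only on the tree metric, so that re-rooting merely re-pins the field at the new base point and the path-valued snake is then recovered from the pair $(\zetas,\Wts)$ exactly as you describe. The deterministic ingredient you single out --- that $\zetas$ codes the same real tree re-rooted at $p(s)$ via $r\mapsto p(s\oplus r)$ --- is the standard re-rooting lemma for continuous coding functions and is unaffected by the presence of hubs, so your plan contains no gap.
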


\subsubsection{Spinal decompositions}\label{sctn:spinal decomp}
We now present an important spinal decomposition formula, which will allow us to express certain integral functionals of the stable snake under $\N_0$ using a decomposition over subtrees and subtrajectories along a branch to a uniform point in the underlying tree.

To this end, we let $(U^{(1)}, U^{(2)})$ be a two dimensional subordinator defined on some probability space $(\Omega_0,\mathcal F_0, P^0)$, started at $(0,0)$ and with Laplace transform (recall that $\psi(\lambda) = \lambda^{\alpha}$)

\begin{align}\label{eqn:U Laplace transform}
E^0[\exp\{-\lambda_1 U^{(1)}_t - \lambda_2 U^{(2)}_t\}] = \begin{cases}
\exp\left\{-\frac{t (\psi(\lambda_1)-\psi(\lambda_2))}{\lambda_1 - \lambda_2}\right\} &\text{ if } \lambda_1 \neq \lambda_2, \\
\exp\{-t \psi'(\lambda_1)\} &\text{ if } \lambda_1 = \lambda_2.
\end{cases}
\end{align}
The marginal of each of $U^{(1)}$ and $U^{(2)}$ is that of a subordinator with Laplace exponent $\tilde{\psi}(\lambda) := \frac{\psi(\lambda)}{\lambda} = \lambda^{\alpha -1}$, and the sum $U^{(1)} + U^{(2)}$ is a subordinator with Laplace exponent $\psi'(\lambda) = \alpha \, \lambda^{\alpha -1}$. Similarly to \eqref{eqn:psi Levy Khinchin}, this entails that
\begin{align}\label{eqn:psi tilde prime Laplace rep}
\begin{split}
\tilde{\psi} ( \lambda) &= \int_0^{\infty} (1-e^{-\lambda r} ) \tilde{\pi} (dr) = \frac{C_{\alpha}\lambda}{2-\alpha} + \int_0^{\infty} (1-e^{-\lambda r} - \lambda r\mathbbm{1}\{r \leq 1\}) \tilde{\pi}(dr), \\
\psi' ( \lambda) &= \int_0^{\infty} (1-e^{-\lambda r} ) \pi' (dr) = \frac{\alpha C_{\alpha}\lambda}{2-\alpha} + \int_0^{\infty} (1-e^{-\lambda r}- \lambda r\mathbbm{1}\{r \leq 1\}) \pi' (dr).
\end{split}
\end{align}
where $\tilde{\pi} (dr) = C_{\alpha} r^{-\alpha} dr$ and $\pi' (dr) = \alpha C_{\alpha} r^{-\alpha} dr$ (these can be calculated directly from \eqref{eqn:psi Levy Khinchin} and \eqref{eqn:pi and C def} but see \cite[Section III.1]{BertoinLevy} for further background on stable subordinators). We note only that $\tilde{\psi}$ can be easily recovered from $\psi'$ by using Fubini's theorem and noting that $U^{(1)}$ is obtained from $U^{(1)} + U^{(2)}$ through the fact that a jump at time $t$ of $U^{(1)}$ corresponds to a uniform portion of the corresponding jump at time $t$ of $U^{(1)} + U^{(2)}$

We will continue to use the notation $\psi, \psi'$ and $\tilde{\psi}$ throughout the paper to refer to these three functions above.

For each $a>0$, let $(J_a, \J_a)$ denote a pair of random measures given by
\begin{align} \label{eq:defsJ}
(J_a, \J_a) (dt) = (\mathbbm{1}_{[0,a]}(t) dU_t^{(1)}, \mathbbm{1}_{[0,a]}(t) dU_t^{(2)})
\end{align}

For $h > 0$, under $E^0 \otimes \Pi_x$, conditionally on the measure $(J_h + \hat J_h)$ and the Brownian trajectory $(\xi_s)_{0\leq s \leq h}$, we consider an independent Poisson point process on $[0,h] \times \mathcal W$, denoted by
\[
\mathcal P := \sum_{i\in I} \delta_{a_i,w_i},
\]
and with intensity
\begin{equation}\label{eqn:PP intensity two sided}
(J_h + \hat J_h)(da) \otimes \mathbb N_{\xi_a} (dw).
\end{equation}
The key result is as follows. In the proposition we let $\mathcal{R}$ denote the \textit{range} of the snake $W$, and we let $\mathbb{E}_{\Pcal}$ denote expectation with respect to the point process $\Pcal$ defined above. Also recall that $\Pi_x$ denotes the law of a standard linear Brownian motion started at $x$.

\begin{proposition}[Proposition 2.4 in~\cite{archer2024snakes}] \label{prop:firstmomentrange ds}
For every $x \in \R$ and every non-negative measurable functional $\phi$ taking values in $M_f(\R_+)^2 \times \W$, we have
\begin{align*}
&\N_x \left[ \int_0^{\sigma} \phi (\rho_s, {\hat{\rho}}_s, W_s,\mathcal R, \sigma) ds \right] \\
&\qquad = \int_0^\infty dh \, E^0 \otimes \Pi_x \left [
\mathbb E_{\mathcal{P}} \left[
\phi \left( J_h, \J_h, (\xi_s)_{0\leq s \leq h}, \overline{\bigcup_{i \in I} \left(\zeta_{a_i} + \mathcal R(w_i) \right)} , \sum_{i \in I} \sigma (w_i)\right)
\middle| (J, \J, \xi)
\right] \right].
\end{align*}
\end{proposition}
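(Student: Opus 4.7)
The plan is to recognise this identity as a Bismut-type decomposition of the stable snake along the ancestral spine from the root to a uniformly selected vertex. Integrating an additive functional $\int_0^\sigma \phi(\cdot)\, ds$ under $\N_x$ amounts heuristically to evaluating $\phi$ at a uniformly chosen time $s \in [0,\sigma]$, i.e.\ at a ``typical'' vertex of the underlying stable tree. The strategy is to decompose the tree-and-snake pair into three ingredients: the spine, of some random height $h$; the hubs along the spine, split by side and encoded by $(J_h, \J_h)$; and the collection of subtrees grafted to the spine, each carrying its own independent snake excursion started from the spine's spatial value at the grafting height.

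The first step is to invoke the known Bismut-type spinal decomposition for the exploration pair $(\rho_s, \hat\rho_s)$ itself, as developed in \cite{LeGDuqMono}: using the time-reversal duality \eqref{eq:reverserho} together with the subordinator structure of $\rho$, one finds that for any measurable non-negative $F$,
\[
\N\!\left[\int_0^\sigma F(\rho_s, \hat\rho_s, H_s)\, ds\right] = \int_0^\infty dh \, E^0\!\left[F(J_h, \J_h, h)\right],
\]
where $(J, \J)$ is the two-dimensional subordinator of \eqref{eqn:U Laplace transform}, whose sum has Laplace exponent $\psi'(\lambda) = \alpha \lambda^{\alpha-1}$ (the derivative of the branching mechanism). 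Conditionally on this tree structure, the three defining properties of the snake recalled in Section~\ref{section:stablesnake} ensure that the snake tips along the spine from the root to $p(s)$ form a standard Brownian motion $(\xi_t)_{0 \leq t \leq h}$ under $\Pi_x$, independent of $(J, \J)$; this identifies the Brownian trajectory appearing on the right-hand side.

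It remains to capture the dependence on $\mathcal R$ and $\sigma$, which requires describing the snake-decorated subtrees grafted to the spine. Each jump of $J$ (respectively $\J$) at height $a \leq h$ records a hub located on the right (respectively left) of the spine; within each such hub, subtrees are grafted according to a Poisson point process, and, conditionally on the spine value $\xi_a$, each of these subtrees is decorated by an independent snake excursion under $\N_{\xi_a}$. Aggregating over both sides of the spine and all heights, the Palm-type bookkeeping yields a single Poisson point process $\Pcal$ on $[0,h] \times \W$ with intensity $(J_h + \J_h)(da) \otimes \N_{\xi_a}(dw)$, exactly as in \eqref{eqn:PP intensity two sided}. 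One then reads off $\sigma = \sum_{i \in I} \sigma(w_i)$ as the total duration and $\mathcal R = \overline{\bigcup_{i \in I}(\zeta_{a_i} + \mathcal R(w_i))}$ as the full range. The main obstacle is organising the conditional-on-the-spine independent decorations into a single unconditional Poisson point process with the stated intensity; the cleanest route is through the excursion theory of $(\rho, W)$ away from the regular recurrent point $(0,x)$ recalled just below \eqref{eqn:Ito snake def}, combined with the branching property of the snake.
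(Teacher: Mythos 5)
The paper does not prove this statement --- it is quoted verbatim from \cite{archer2024snakes} --- and your sketch follows essentially the same route as that reference: the Bismut-type identity for $(\rho_s,\hat{\rho}_s)$ evaluated at a uniform time (giving $(J_h,\J_h)$ with $h$ Lebesgue-distributed), Brownian labels along the spine under $\Pi_x$, and the conditionally Poissonian family of snake-decorated subtrees grafted to the spine with intensity $(J_h+\J_h)(da)\otimes \N_{\xi_a}(dw)$, from which $\sigma$ and $\mathcal R$ are reassembled. The one slight misdirection is your suggestion to organise the grafted decorations via the excursion theory of $(\rho,W)$ away from $(0,x)$, which governs the forest attached at the root under $\Pb_x$ rather than the decomposition along a spine inside a single excursion; the correct tool is the other one you name, the Markov/branching property of the \Levy snake under $\N_x$ (see Lemma 4.2.4 and Proposition 4.3.2 of \cite{LeGDuqMono}), but this does not change the structure of the argument.
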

Informally, the proposition says that the sizes hubs along the branch from $\rho$ to a uniform point are given by the jumps of the subordinator $(J, \hat{J})$, and that the subtrees grafted to the hubs form a Poisson process coded by the \Ito excursion measure.

In order to carry out calculations for Poisson processes such as those appearing in \cref{prop:firstmomentrange ds}, and with respect to the measures $J$ and $\hat{J}$, we remind the reader of two standard formulas.

Firstly note that, using the Markov property and stationarity of increments, the formula \eqref{eqn:U Laplace transform} easily generalises to the following for continuous nonnegative functions $f$.
\begin{align}\label{eqn:U Laplace transform cts}
\begin{split}
E^0\left[\exp\left\{-\int_0^t f(s) d(U^{(1)} + U^{(2)})_s \right\}\right] &= 
\exp\left\{-\int_0^t \psi'(f(s)) ds \right\} \\
E^0\left[\exp\left\{-\int_0^t f(s) dU^{(1)}_s \right\}\right] &= 
\exp\left\{-\int_0^t \tilde{\psi}(f(s)) ds \right\}.
\end{split}
\end{align}

Secondly, we will use the following formula for Poisson point processes several times.

\begin{proposition}\label{prop:Poisson master formula}
Let $P = \sum_{i \in \mathcal{I}} \delta (\omega_i)$ denote a Poisson point process on a space $\Omega$ with intensity measure $\hat{\mu}$ and let $f$ be a function $\Omega \to \R$. Then
\[
\E{\prod_{i \in \mathcal{I}} f (\omega_i)} = \exp \left\{\int_{\Omega} (f (\omega)-1) \hat{\mu}(d \omega)\right\}.
\]
\end{proposition}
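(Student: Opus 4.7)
The plan is to use the explicit construction of a Poisson point process with finite intensity and then extend by $\sigma$-finiteness. This identity is often referred to as the Campbell--Hardy (or ``master'') formula for Poisson processes, and the proof is entirely structural.

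First I would establish the result assuming $\hat{\mu}(\Omega) < \infty$. By the defining property of a Poisson point process, the total number of atoms $N := |\mathcal{I}|$ is a Poisson random variable with parameter $\hat{\mu}(\Omega)$, and conditionally on $\{N = n\}$ the atoms $(\omega_i)_{i=1}^{n}$ are i.i.d.\ with common law $\hat{\mu}(\cdot)/\hat{\mu}(\Omega)$. Conditioning on $N$ and summing gives
\[
\E{\prod_{i \in \mathcal{I}} f(\omega_i)} = \sum_{n=0}^{\infty} e^{-\hat{\mu}(\Omega)} \frac{\hat{\mu}(\Omega)^n}{n!} \left( \frac{1}{\hat{\mu}(\Omega)} \int_\Omega f \, d\hat{\mu} \right)^n = \exp\left\{ \int_\Omega (f - 1) \, d\hat{\mu} \right\},
\]
the last equality being the Taylor series for $\exp$ followed by combining the two exponentials.

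Next I would extend to $\sigma$-finite $\hat{\mu}$ by writing $\Omega = \bigsqcup_{k \geq 1} \Omega_k$ with $\hat{\mu}(\Omega_k) < \infty$. By the restriction and independence properties of Poisson point processes, the restrictions $P|_{\Omega_k}$ are mutually independent Poisson point processes with intensities $\hat{\mu}|_{\Omega_k}$, and $\mathcal{I}$ is the disjoint union of their index sets. Applying the finite case on each piece and using independence,
\[
\E{\prod_{i \in \mathcal{I}} f(\omega_i)} = \prod_{k \geq 1} \exp\left\{ \int_{\Omega_k} (f - 1) \, d\hat{\mu} \right\} = \exp\left\{ \int_\Omega (f - 1) \, d\hat{\mu} \right\}.
\]

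The only point that nominally requires care is the convergence of the infinite product on the left (equivalently, the integrability of $(f - 1)$ against $\hat{\mu}$), but this is not a genuine obstacle: in all applications in this paper, $f$ will be nonnegative and bounded with $|f-1|$ being $\hat{\mu}$-integrable, so both sides are finite and equal; in more degenerate cases one truncates $f$ to a finite-intensity region and passes to the limit by monotone or dominated convergence. There is really no hard step here — this is just unwinding the definition of a Poisson point process.
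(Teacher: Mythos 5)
Your argument is correct and is the standard one: the paper itself offers no proof of this proposition, simply recalling it as a classical "master formula" for Poisson point processes, and your conditioning-on-$N$ computation in the finite-intensity case followed by the superposition/independence argument for $\sigma$-finite $\hat{\mu}$ is exactly how this is established in the literature. The only caveat worth keeping in mind is the one you already flag: as stated the formula implicitly requires $f$ measurable and, say, $0 \leq f \leq 1$ or $|f-1|$ integrable against $\hat{\mu}$ for both sides to be unambiguously defined, after which your truncation/monotone-convergence remark closes the gap.
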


\subsubsection{Local time at level $a$}\label{sctn:exit measures and SMP}
It will also be useful to introduce the notion of the local time at a certain level of $\Ta$. For $a>0$, Duquesne and Le Gall \cite{DLG05} showed that one can construct a local time measure, supported on vertices at height $a$ in $\Ta$, and moreover such that the canonical measure $\Volt$ on $\Ta$ satisfies
\begin{equation}\label{eqn:local a def}
\Volt = \int_0^{\infty} \ell^a da,
\end{equation}
$N$-almost everywhere. A vertex chosen according to $\ell^a$ can therefore be interpreted as a vertex chosen ``uniformly at level $a$'' in $\Ta$.

We will use the following first moment formula, which is a special case of \cite[Proposition 4.3.2]{LeGDuqMono} or \cite[Equation (3.4)]{riera2022structure}.

\begin{proposition} \label{prop:firstmomentdLs}
For every domain $D$, every $x\in D$, and every non-negative measurable functional $\phi$ taking values in $M_f(\R_+)^2 \times \W$, we have
\begin{align*}
\N_x \left[ \int_0^{\sigma} \ell^a (ds) \, \phi (\rho_s, {\hat{\rho}}_s, W_s) \right] =  E^0 \otimes \Pi_x \left( \phi(J_{a}, \J_{a}, (\xi_s)_{0\leq s \leq a}) \right).
\end{align*}
\end{proposition}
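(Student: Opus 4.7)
The plan is to deduce this formula from Proposition~\ref{prop:firstmomentrange ds} by ``localising at height $a$'', using the fact that $\ell^a$ is defined precisely to be a conditional/disintegrated version of Lebesgue measure on $[0,\sigma]$ with respect to the height process. The key is that the right-hand side of Proposition~\ref{prop:firstmomentrange ds}, when specialised to a test function $\phi$ that depends only on $(\rho_s,{\hat{\rho}}_s,W_s)$, is an integral in $h$ of an expression that has the exact form we want at $h=a$.

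First I would reduce to the case that $\phi$ is bounded and continuous on $M_f(\R_+)^2 \times \W$. Then, for fixed $a>0$ and $\epsilon >0$, I apply Proposition~\ref{prop:firstmomentrange ds} to the test function $(\rho_s,{\hat{\rho}}_s,W_s,\mathcal R,\sigma) \mapsto \phi(\rho_s,{\hat{\rho}}_s,W_s)\,\mathbbm{1}\{a \leq \zeta_{W_s} \leq a+\epsilon\}$ (recalling that $\zeta_{W_s}=H_s$). Since on the right-hand side the lifetime of the snake path $(\xi_s)_{0\le s\le h}$ is exactly $h$, the indicator reduces the outer $dh$ integration to $[a,a+\epsilon]$. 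This gives
\[
\N_x\!\left[\frac{1}{\epsilon}\int_0^{\sigma} \mathbbm{1}\{a\leq H_s \leq a+\epsilon\}\, \phi(\rho_s,{\hat{\rho}}_s,W_s)\, ds\right]
= \frac{1}{\epsilon}\int_a^{a+\epsilon} E^0 \otimes \Pi_x\!\left[\phi(J_h,\J_h,(\xi_s)_{0\leq s \leq h})\right] dh.
\]

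Next I take $\epsilon \downarrow 0$. On the right-hand side, the integrand in $h$ is continuous at $h=a$: the Brownian path $(\xi_s)_{0\le s\le h}$ is clearly continuous in $h$ in the topology of $\W$, and the random measures $J_h, \J_h$ defined in \eqref{eq:defsJ} are \cadlag (hence continuous at the deterministic point $a$, almost surely), so by bounded convergence and Lebesgue differentiation the right-hand side converges to $E^0 \otimes \Pi_x[\phi(J_a,\J_a,(\xi_s)_{0\leq s \leq a})]$. On the left-hand side, the fundamental approximation of the local time of the height process at level $a$, which is part of the very construction of $\ell^a$ in \cite{LeGDuqMono} and is consistent with the disintegration \eqref{eqn:local a def}, asserts that for continuous bounded $\phi$,
\[
\frac{1}{\epsilon}\int_0^{\sigma} \mathbbm{1}\{a\leq H_s \leq a+\epsilon\}\, \phi(\rho_s,{\hat{\rho}}_s,W_s)\, ds \;\longrightarrow\; \int_0^{\sigma} \ell^a(ds)\, \phi(\rho_s,{\hat{\rho}}_s,W_s)
\]
in $L^1(\N_x)$. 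Passing to the limit on both sides yields the stated identity.

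The main obstacle is the last point, namely the $L^1(\N_x)$-convergence of the Riemann-type approximations of $\ell^a$ when integrated against a snake-dependent functional, rather than a pure function of the height process. This is exactly the content of the local time construction of Duquesne--Le Gall and its snake extensions, which is why the statement is invoked as a consequence of \cite[Proposition~4.3.2]{LeGDuqMono}; once that convergence is granted, the rest of the argument is a direct application of Proposition~\ref{prop:firstmomentrange ds} together with a soft differentiation-in-$a$ step. A consistency check is that integrating the claimed formula over $a \in (0,\infty)$ and using \eqref{eqn:local a def} recovers exactly the specialisation of Proposition~\ref{prop:firstmomentrange ds} to functionals that do not depend on $(\mathcal R,\sigma)$.
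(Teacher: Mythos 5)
You derive the statement from Proposition~\ref{prop:firstmomentrange ds}, whereas the paper gives no proof at all: it simply quotes the result as a special case of \cite[Proposition 4.3.2]{LeGDuqMono} (or \cite[Equation (3.4)]{riera2022structure}). Your route is sound and is a genuine alternative: testing Proposition~\ref{prop:firstmomentrange ds} against $\phi(\rho_s,\hat{\rho}_s,W_s)\indic{a\le \zeta_{W_s}\le a+\epsilon}$ does localise the $dh$-integral to $[a,a+\epsilon]$, since the third argument on the right-hand side has lifetime exactly $h$; the right-hand side integrand is right-continuous in $h$ at $a$ for bounded continuous $\phi$ (the subordinators are right-continuous, $\xi$ is continuous), so the averaged integral converges to its value at $h=a$; and a monotone class argument recovers general non-negative measurable $\phi$. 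The consistency check via \eqref{eqn:local a def} is also correct and there is no circularity, since Proposition~\ref{prop:firstmomentrange ds} is established independently. What each approach buys: the paper's citation imports the statement in full strength from the literature, while your argument exhibits it as a consequence of the time-integral spinal decomposition plus the local-time approximation, i.e.\ it shows the two displayed ``many-to-one'' formulas are really one statement disintegrated over the height.

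The one step you should not attribute wholesale to the cited construction is the $L^1(\N_x)$ convergence of $\frac1\epsilon\int_0^\sigma\indic{a\le H_s\le a+\epsilon}\,\phi(\rho_s,\hat{\rho}_s,W_s)\,ds$. The approximation results of \cite{LeGDuqMono} (Section 1.3, and their extension under the excursion measure) concern the local time of the height process itself, i.e.\ the case $\phi\equiv 1$, uniformly in the time variable; integrating a snake-dependent functional requires a small additional argument. One standard way: the approximating measures $\frac1\epsilon\indic{a\le H_s\le a+\epsilon}\,ds$ converge weakly on $[0,\sigma]$ to $\ell^a(ds)$, the map $s\mapsto(\rho_s,\hat{\rho}_s,W_s)$ is c\`adl\`ag with at most countably many discontinuities while $\ell^a$ is diffuse, so the integrand is $\ell^a$-a.e.\ continuous and the integrals converge almost everywhere; combining this with the convergence of first moments (both equal to $\frac1\epsilon\int_a^{a+\epsilon}dh$ times a bounded quantity, by the already-established identity with indicator) and Scheff\'e's lemma gives the $L^1(\N_x)$ statement for bounded $\phi$. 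This is routine, but it is a genuine extra step beyond what the approximation theorems literally assert, and your write-up should include it rather than folding it into the citation.
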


Note that this also implies that $N( \langle \ell^a, 1 \rangle)=1$ for all $a>0$.

\section{Trees, maps and scaling limits}
\label{section:discrete-setup}

\subsection{The pointed Gromov-Hausdorff-Prokhorov topology}

To study the asymptotic metric-measure structure of a large random map, we will make use of the \textbf{pointed Gromov-Hausdorff-Prokhorov} distance, defined between (isometry classes of) compact rooted metric-measure spaces. In particular, we let $\mathbb{F}^c$ denote the set of quadruples $(F,R,\mu,\rho)$ such that $(F,R)$ is a compact metric space, $\mu$ is a locally finite Borel measure of full support on $F$, and $\rho$ is a distinguished point of $F$, which we call the root, considered up to root-preserving, measure-preserving isometries.

Let $(E_1,d_1, \nu_1, \rho_1)$ and $(E_2,d_2, \nu_2, \rho_2)$ be two compact rooted metric-measure spaces in $\mathbb{F}^c$. The pointed Gromov-Hausdorff-Prokhorov distance between them is defined as
\begin{equation}\label{eqn:dGHP def}
d_{GHP}(E_1,E_2)=\inf\left\{d_{H}(\varphi_1(E_1),\varphi_2(E_2)) + d(\varphi_1(\rho_1),\varphi_2(\rho_2)) + d_P(\varphi_1^*\nu_1, \varphi_2^*\nu_2) \right\},
\end{equation}
where the infimum is over all isometric embeddings $\varphi_i:E_i\to E$ of $E_1$ and $E_2$ into the same metric space $(E,d)$, $d_H$ is the Hausdorff distance between compact subsets of $E$, and $d_P$ denotes the Prokhorov distance between measures on $E$, as defined in \cite[Chapter 1]{BillsleyConv}. It is well-known (for example, see \cite[Theorem 2.3]{AbDelHoschNoteGromov}) that this defines a metric on the space of equivalence classes of $\mathbb{F}^c$, where we say that two spaces $(F,R,\mu,\rho)$ and $(F',R',\mu',\rho')$ are equivalent if there is a measure and root-preserving isometry between them.

We can also consider the \textbf{pointed Gromov-Hausdorff} distance $d_{GH}(\cdot, \cdot)$ between two spaces in $\mathbb{F}^c$; this is defined by removing the term involving the Prokhorov distance from \eqref{eqn:dGHP def} above. A useful characterisation of the pointed Gromov-Hausdorff distance can be made via \emph{correspondences}.

\begin{definition}
If $(E_1, d_1)$ and $(E_2,d_2)$ are two compact metric spaces, a \textbf{correspondence} between $E_1$ and $E_2$ is a subset $R \subseteq E_1 \times E_2$ such that, for every $x_1 \in E_1$, there exists $x_2 \in E_2$ such that $(x_1,x_2) \in R$, and for every $x_2 \in E_2$, there exists $x_1 \in E_1$ such that $(x_1,x_2) \in R$. 

If $R$ is a correspondence between $(E_1,d_1)$ and $(E_2,d_2)$, the \textbf{distortion} of $R$ is given by
\[
\textnormal{dis}_R:=\sup\{\lvert d_1(x_1,y_1) - d_2(x_2,y_2) \rvert; \, (x_1,x_2), (y_1,y_2) \in R\}.
\]
\end{definition}

As was done in the Brownian case, we will use the following theorem to establish tightness in the Gromov-Hausdorff topology.

\begin{theorem}[Theorem 7.3.25 in~\cite{burago-burago-ivanov}]\label{lem:distortion} Let $(E_1,d_1,\nu_1, \rho_1)$ and $(E_2,d_2,\nu_2, \rho_2)$ be  in $\mathbb{F}^c$. Then
\[
d_{GH}((E_1,d_1,\nu_1, \rho_1),(E_2,d_2,\nu_2, \rho_2))=\frac{1}{2}\inf \textnormal{dis}_R,
\]
where the infimum is taken over the set of correspondences between $E_1$ and $E_2$ containing the pair $(\rho_1, \rho_2)$.
\end{theorem}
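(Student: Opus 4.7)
The result is a standard characterization from Burago, Burago, and Ivanov, and the plan is to prove the two matching inequalities, with the only extra care being that the pair $(\rho_1, \rho_2)$ must be tracked throughout so that it lies in the chosen correspondence.

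For the upper bound $d_{GH} \leq \tfrac{1}{2} \inf_R \textnormal{dis}_R$, I would start with any correspondence $R$ containing $(\rho_1, \rho_2)$ and having distortion $\eta$, and construct an explicit admissible embedding. The standard construction is to place a metric $d$ on the disjoint union $E_1 \sqcup E_2$ that restricts to $d_1, d_2$ on each side and assigns cross distances
\[
d(x_1, x_2) := \inf\left\{d_1(x_1, y_1) + \tfrac{\eta}{2} + d_2(y_2, x_2) : (y_1, y_2) \in R\right\}
\]
for $x_1 \in E_1$, $x_2 \in E_2$. The only nonroutine verification is the triangle inequality across the two sides: when one chains through two correspondence pairs, the distortion bound $|d_1(y_1, y_1') - d_2(y_2, y_2')| \leq \eta$ is precisely what is needed to absorb the added $\eta/2$ offsets. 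Once $d$ is shown to be a pseudometric, both $E_i$ embed isometrically into $(E_1 \sqcup E_2, d)$, every point of one space lies within $\eta/2$ of its partner under $R$, and the inclusion $(\rho_1, \rho_2) \in R$ forces $d(\rho_1, \rho_2) \leq \eta/2$, so the Hausdorff distance and root-to-root distance are both at most $\eta/2$.

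For the converse $d_{GH} \geq \tfrac{1}{2} \inf_R \textnormal{dis}_R$, I would fix $\epsilon > 0$ and choose isometric embeddings $\varphi_i : E_i \to (E, d)$ whose value of $d_H(\varphi_1(E_1),\varphi_2(E_2)) + d(\varphi_1(\rho_1), \varphi_2(\rho_2))$ is within $\epsilon$ of $d_{GH}$, then set $s$ equal to this sum and take
\[
R := \{(x_1, x_2) \in E_1 \times E_2 : d(\varphi_1(x_1), \varphi_2(x_2)) \leq s\} \cup \{(\rho_1, \rho_2)\}.
\]
The Hausdorff bound guarantees that this relation is a correspondence, and the triangle inequality in the ambient space yields $|d_1(x_1, y_1) - d_2(x_2, y_2)| \leq 2s$ for any pairs $(x_1, x_2), (y_1, y_2) \in R$. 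The adjoined root pair does not worsen this estimate because the root-to-root contribution was already built into $s$. Letting $\epsilon \to 0$ gives the desired inequality.

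There is no real obstacle in the argument: the two key moves are the triangle-inequality check for the pseudometric in the upper bound and the uniform distortion bound via the ambient metric in the lower bound, both of which are essentially forced once the correct constructions are written down. The only care specific to the pointed version, as opposed to the classical unpointed statement, is to make sure the root pair is present in the correspondence without inflating the distortion beyond what the pointed GH distance already charges.
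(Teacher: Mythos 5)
The paper does not actually prove this statement (it cites \cite{burago-burago-ivanov}), so the benchmark is the standard distortion argument, which is exactly the route you take. Your lower-bound direction, $d_{GH}\geq\tfrac12\inf\textnormal{dis}_R$, is correct as written, including the point that the adjoined root pair costs nothing because its ambient distance is already bounded by $s$; and your gluing construction for the upper bound, with cross-distances defined through pairs of $R$ plus an $\eta/2$ offset, is the standard one, with the triangle-inequality check handled correctly.

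The problem is the final inference of the upper bound, which does not match the definition of $d_{GH}$ used in this paper. Here the pointed Gromov--Hausdorff distance is obtained from \eqref{eqn:dGHP def} by deleting only the Prokhorov term, so it is the infimum of the \emph{sum} $d_H(\varphi_1(E_1),\varphi_2(E_2))+d(\varphi_1(\rho_1),\varphi_2(\rho_2))$, not of the maximum of the two terms. From ``both quantities are at most $\eta/2$'' you therefore only obtain $d_{GH}\leq\eta=\textnormal{dis}_R$, not $\leq\tfrac12\textnormal{dis}_R$. This is not merely a loss in your argument: with the sum convention the equality as stated is false in general (take $E_1=\{\rho_1\}$ a single point and $E_2=\{\rho_2,x\}$ with $d_2(\rho_2,x)=1$; every admissible embedding gives $d_H+d(\text{roots})\geq 1$, while $\tfrac12\inf\textnormal{dis}_R=\tfrac12$). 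The clean statements are either the equality for the max-convention pointed GH distance, which is what your proof actually establishes, or the two-sided bound $\tfrac12\inf\textnormal{dis}_R\leq d_{GH}\leq\inf\textnormal{dis}_R$ for the sum convention; either version suffices for the only use made of the result in the paper, namely that vanishing distortion of root-preserving correspondences forces vanishing pointed GH distance. So the imprecision is partly inherited from the statement itself (which cites the unpointed BBI theorem), but as written your upper bound does not deliver the constant $\tfrac12$ that the claimed equality requires.
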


In this article, we will prove (subsequential) pointed Gromov-Hausdorff-Prokhorov convergence by first proving  pointed Gromov-Hausdorff convergence using correspondences, and then show Prokhorov convergence of the measures on the appropriate metric space.

\subsection{Discrete trees}\label{sctn:discrete tree defs}

A \textbf{tree} is a connected graph (i.e. with vertices and edges) with no cycles. A \textbf{plane tree} is a tree that is embedded in the sphere (it can be equivalently embedded in the plane). In this paper we will deal mainly with rooted plane trees, meaning that there is a distinguished oriented edge known as the \textbf{root edge}. The initial vertex of the root edge is called the \textbf{root vertex}. This induces the notion of genealogy: we say that $u$ is the \textbf{parent} of $v$, or $v$ is a \textbf{child} of $u$ if $u$ and $v$ are neighbours and $u$ is strictly closer to the root vertex.

In what follows, we denote by $\mathbf{T}$ the set of rooted plane trees, and by $\mathbf{T}_n$ the set of rooted plane trees with $n$ edges.

To describe rooted plane trees, we will make use of the \textbf{Ulam-Harris encoding} (see \cite{neveu1986arbres} for further background), in which vertices of a tree $\tf \in \mathbf{T}$ are labelled by elements of $\bigcup_{n \geq 0}\N^n$, where $\N = \{1, 2, \ldots \}$ and where we set $\N^0=\varnothing$. The root of $\tf$ receives Ulam-Harris label $\varnothing$, and the other labels are determined recursively, with the children of vertex $v=v_1v_2\dotsm v_h \in \N^h$ receiving Ulam-Harris labels $(vi, 1\leq i \leq c_{\tf}(v))$ (where $c_{\tf}(v)$ is the number of children of $v$ in $\tf$), in the left-to-right ordering given by the embedding of $\tf$ in the plane, in which the outer endpoint of the root edge is designated to be the leftmost child of the root. For $v=v_1v_2\dotsm v_h$, we denote by $\lvert v \rvert=h$ the generation of $v$. 

Let $\tf \in \mathbf{T}$, and $u,v \in V(\tf)$. We denote by $[[u,v]]$ the set of vertices of $\tf$ that lie on the unique shortest path from $u$ to $v$ in $\tf$, and $]]u,v[[=[[u,v]]\setminus \{u,v\}$. We also denote by $u \wedge v$ the most recent common ancestor of $u$ and $v$ in $\tf$, which may be defined by the equality $[[\varnothing, u \wedge v ]]=[[\varnothing, u]] \cap [[\varnothing,v]]$. This also leads to the notion of \textbf{lexicographical order}: $u$ comes before $v$ in this ordering, denoted $u \leq v$, if and only if $u_{u \wedge v + 1} \leq v_{u \wedge v + 1}$ in the Ulam-Harris encoding.

\begin{remark}
To avoid confusion, we point out that elsewhere in the literature that we cite, rooted plane trees are often referred to as ``rooted ordered plane trees" where the root is a vertex and the ordering is equivalent to the choice of the leftmost child of the root which, together with the planar embedding, fixes the Ulam-Harris/lexicographical ordering of the vertices. Since we started with a root edge, rather than a root vertex, we have already fixed this ordering. Sometimes they are also just referred to as ``plane trees" and the root vertex and ordering is automatically included in this definition.
\end{remark}

We denote by $\preceq$ the genealogical relation on $\tf$: we write $u \preceq v$ and say $u$ is an \textbf{ancestor} of $v$, or $v$ is a \textbf{descendant} of $u$, if $u \in [[\varnothing, v]]$. We use $\leq$ for the lexicographical order on $\tf$, and $\ll$ for the \emph{reverse} lexicographical order: $u \ll v$ means that the image of $v$ comes before the image of $u$ in the lexicographical order of the mirror tree. More precisely, if neither $u \prec v$ nor $v \prec u$ holds, then $u \ll v$ iff $u \leq v$; if $u \prec v$, then $v \ll u$ (and $u \leq v$). We also use the lexicographical order to define a projection $p_{\tf}$ from $\{0, \ldots, |V(\tf)|-1\}$ to $\tf$: $p_{\tf}(i)$ denotes the $(i+1)^{th}$ vertex in the lexicographical ordering of $\tf$.

A tree $\tf \in \mathbf{T}_n$ can be coded by three different functions as illustrated in Figure~\ref{fig:contourheightfns}. We define the \textbf{contour exploration} of $\tf$ as the sequence $e_0, e_1, \dots, e_{2n-1}$ of oriented edges bounding the unique face of $\tf$, starting with the root edge, and ordered counterclockwise around this face (it therefore traces the ``outer boundary" of the tree in ``left-to-right" fashion). For $0 \leq i \leq 2n-1$, the $i^{th}$ visited vertex in this contour exploration is the initial vertex $e^-_i$ of $e_i$, and we set $C_{\tf}(i)=|e^-_i|$. Note that the contour ordering is such that the ordering of the vertices determined by the times of their first visit by the contour function is the same as their lexicographical ordering.

To define the \textbf{height function}, we now list the $n+1$ vertices of $\tf$ in lexicographical order as $u_0, u_1, \ldots, u_n$. We define the height process $H_{\tf}=(H_{\tf}(j))_{0 \leq j \leq n}$ by
\begin{equation}\label{eqn:height function def disc}
H_{\tf}(j)=\lvert u_j \rvert, \, 0 \leq j \leq n.
\end{equation}
We also define the \textbf{Łukasiewicz path} $(X_{\tf}(m))_{0 \leq m \leq n}$ by setting $X_{\tf}(0) = 0$, then by considering the vertices $u_0, u_1, \ldots, u_n$ in lexicographical order and setting $X_{\tf}(m+1) = X_{\tf}(m) + c_{\tf}(u_m)-1$ (recall that $c_{\tf}(u)$ denotes the number of children of $u$ in $\tf$). The height function and Łukasiewicz path are connected by the following relation.
\begin{equation}\label{eqn:height Luk def}
H_{\tf}(m) = | \{j \in \{0, 1, \ldots, m-1\}:X_{\tf}(j)=\inf_{j \leq k \leq m}X_{\tf}(k) \}|.
\end{equation}
The intuition behind this is that the times $j \in \{0, 1, \ldots, m-1\}$ satisfying $X_{\tf}(j)=\inf_{j \leq k \leq m}X_{\tf}(k)$ correspond exactly to the ancestors of the vertex $u_m$.

Although the height function and the contour function directly encode the geometry of the tree, we will see below that in the setting of random trees, the Łukasiewicz path is often more convenient to work with.

\begin{figure}[h]
\centering
\includegraphics[width=16.5cm, height=5.6cm]{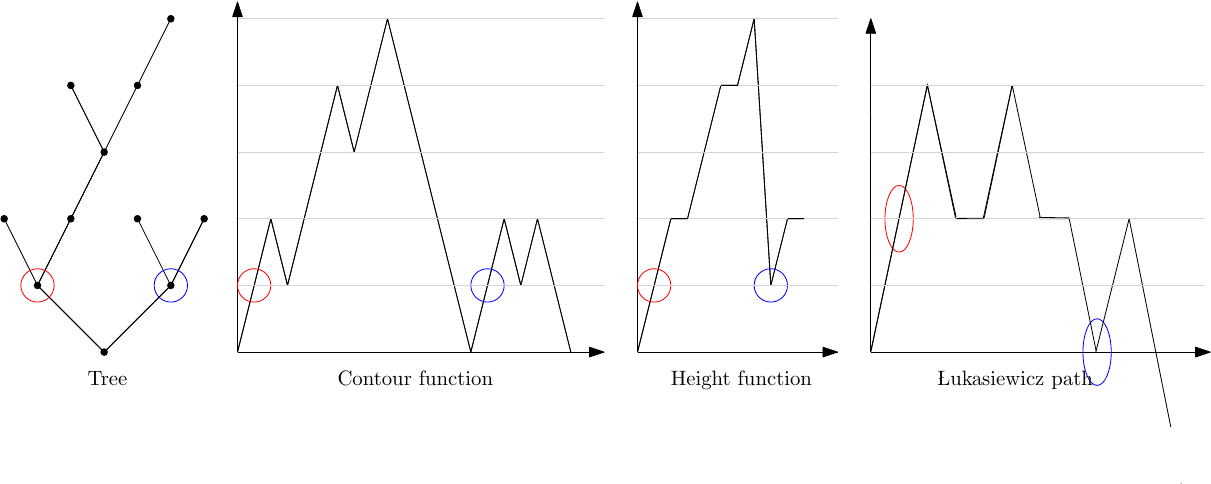}
\caption{Example of contour function, height function and Łukasiewicz path for the given tree.}
\label{fig:contourheightfns} 
\end{figure}

Let $\tf$ be a random element of $\mathbf{T}$. We say it has the \textbf{branching property} if, for any $n \in \N$, conditionally on $\{c_{\tf}(\varnothing)=n\}$, the subtrees $(\tf_1, \dotsm, \tf_n)$ of descendants of the $n$ children of the root are independent and each distributed as the original tree $\tf$. The law of such a tree is characterised by its \textbf{offspring distribution}: for $\pof$ a probability measure on $\ZZ_{\geq 0}$, a random rooted plane tree $\tf$ is a \textbf{Bienaymé--Galton--Watson tree with offspring distribution \boldmath$\pof$} if it has the branching property and the distribution $c_{\tf}(\varnothing)$ of the number of children of the root is $\pof$. We say an offspring distribution is \textbf{critical} if it has mean $1$. Note that Bienaymé--Galton--Watson trees thus come equipped with a root edge and a lexicographical order.

Duquesne \cite[Theorem 3.1]{duquesne2003limit} showed that when $\Tn$ is a Bienaymé--Galton--Watson tree with critical offspring distribution satisfying $\pof[x,\infty) \sim cx^{-\alpha}$ for some $\alpha \in (1,2)$ and $c \in (0, \infty)$, conditioned to have $n$ edges, and $X_n, H_n$ and $C_n$ respectively denote its Łukasiewicz path, height function and contour function, then there exists $c' \in (0, \infty)$ (explicitly depending on $c$) such that the following joint convergence holds with respect to the Skorokhod-$J_1$ product topology:
\begin{equation}\label{eqn:Duquesne contour conv}
\Bigg(\frac{1}{c'}n^{-\frac{1}{\alpha}} X_n(nt), c'n^{-(1-\frac{1}{\alpha})} H_n(nt), c'n^{-(1-\frac{1}{\alpha})} C_n(2nt)\Bigg)_{t \in [0,1]} \xrightarrow[n \to \infty ]{(d)} \Big(\X_t, H_t, H_t \Big)_{t \in [0,1]},
\end{equation}
where $(H_t)_{t \in [0,1]}$ denotes the height function defined from $\X$ as in \eqref{eqn:height def}. (The joint convergence including the Łukasiewicz path is not specifically stated in \cite[Theorem 3.1]{duquesne2003limit}, but is implicit in the proof.) A straightforward consequence is that
\[
(\Tn, c'n^{-(1-\frac{1}{\alpha})}\dmn, \rho_n)  \xrightarrow[n \to \infty ]{(d)} (\Ta, \dt, \rho)
\]
with respect to the pointed Gromov-Hausdorff topology, where $\dmn$ denotes the graph metric on $\Tn$, $\rho_n$ its root vertex, and $(\Ta, \dt, \rho)$ is the stable tree defined in \cref{sctn:stable tree def}. This can also be straightforwardly extended to include convergence of the uniform measure on $\Tn$ to the measure $\Volt$ introduced in \cref{sctn:stable tree def} and therefore give convergence in the pointed Gromov-Hausdorff-Prokhorov topology.

\subsection{Random planar maps}

We now give some basic definitions related to maps.

A \textbf{planar map} is an embedding of a graph $G=(V,E)$ into the sphere in such a way no two edges cross, considered up to orientation-preserving homeomorphism. In what follows we will sometimes drop the adjective ``planar", but all maps should be assumed to planar maps in this paper. A \textbf{rooted} planar map is a planar map equipped with a distinguished oriented edge, called its \textbf{root edge}. The starting vertex of the root edge is called the \textbf{root vertex}.  

A \textbf{pointed} rooted planar map is a rooted planar map that is equipped with an additional distinguished vertex (this may or may not coincide with the root vertex).

We will also need the notion of corners: if we imagine all edges to be replaced by two (oppositely) directed edges, a \textbf{corner} in a map is an angular sector $c$ between an incoming edge and an outgoing edge that follow each other in the clockwise cyclic order around a vertex $v$. The corner is canonically associated with this latter outgoing edge. If $v$ is has degree 1, it is associated with one corner.

\begin{definition} \label{def:quads}
A \textbf{quadrangulation} is a planar map whose faces all have degree four. We denote by $\mathcal{Q}_n$ the set of rooted planar quadrangulations with $n$ faces, and by $\mathcal{Q}_n^{\bullet}$ the set of pointed, rooted planar quadrangulations with $n$ faces.
\end{definition}

A map $\mathfrak{m}$ induces a metric space by considering the set of its vertices $V(\mathfrak{m})$ equipped with the graph distance; the latter is denoted by $d_{\mathfrak{m}}$. For $u \in V(\mathfrak{m})$ and $r \geq 0$, we denote by
\[
B_{\mathfrak{m}}(u,r)=\{v \in V(\mathfrak{m}) \, \lvert \, d_{\mathfrak{m}}(u,v) \leq r\}
\]
the \textbf{ball of radius \boldmath $r$ around $u$ in $\mathfrak{m}$}. A \textbf{geodesic} in $\mathfrak{m}$ is a discrete path $\gamma=(\gamma(i), 0 \leq i \leq k)$ taking values in $V(\mathfrak{m})$, such that $d_{\mathfrak{m}}(\gamma(i),\gamma(j))=\lvert i - j \rvert$ for every $i,j \in \{0, \dotsm, k\}$.

\subsection{The Cori--Vauquelin--Schaeffer bijection}\label{sctn:CVS}

\begin{figure}[htp]
\centering
\includegraphics[scale=0.8]{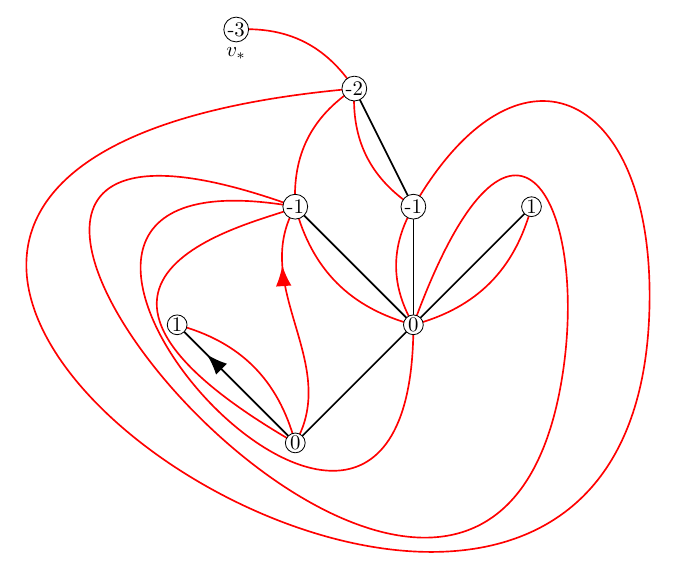}
\caption{The CVS bijection: an admissible labelled tree with 6 edges (in black), and the associated planar quadrangulation with 6 faces (in red), with its pointed vertex $v_*$.}
\label{fig:CVS}
\end{figure}

Let us now get to the bijection between rooted planar quadrangulations with $n$ faces, and a family of decorated trees with $n$ edges, following very closely the presentation of \cite[Section 2.3]{MiermontStFlour2014}. This bijection was first introduced in the work of Cori and Vauquelin \cite{cori1981planar} and further developed in the thesis of Schaeffer \cite{schaeffer1998conjugaison}. We will actually only use the inverse of the pointed version of this bijection, from trees to quadrangulations, which was constructed explicitly by Chassaing and Schaeffer~\cite{chassaing2004random} and refer to it as the \textbf{CVS bijection}.

 Let $\tf$ be a rooted plane tree with $n$ edges, with root edge $e_0$ and root vertex $u_0$. 
An \textbf{admissible label function} on $\tf$ is a function $\ell: V(\tf) \to \ZZ$ such that $\ell(u_0) = 0$, and, for every adjacent $u, v \in V(\tf)$, $| \ell(u) - \ell(v)| \leq 1$. If $\tf$ is a rooted plane tree, and $\ell$ is an admissible label function on $\tf$, we say that the pair $(\tf,\ell)$ is a \textbf{admissible labelled tree}. We denote by $\mathbb{T}_n$ the set of admissible labelled trees with $n$ edges.

Consider a labelled tree $(\tf,\ell) \in \mathbb{T}_n$. Let $e_0 , e_1 , \dots , e_{2n-1}$ be the contour exploration of the oriented edges of $\tf$, as defined in \cref{sctn:discrete tree defs}. Recall that $e^-_i$ is the initial vertex of $e_i$.  It will be useful extend the sequences $(e_i)_i$ and $(e^-_i)_i$ to $\N$ by periodicity. In what follows we will identify the oriented edge $e_i$ with the associated corner, and thus use the notation $\ell(e_i)$ for $\ell(e^-_i)$.

For every $i \geq 0$, we define the \textbf{successor} of $i$ by
\begin{equation}
s(i) := \inf\{j > i \, | \, \ell(e_j) = \ell(e_i) - 1\},
\end{equation}
with the convention that $\inf\varnothing = \infty$. Note that $s(i) = \infty$ if and only if $\ell(e_i) = \min_j \ell(e_j)$, as the values of $\ell$ decrease only by unit steps. We introduce a new vertex, corresponding to a corner labelled $e_{\infty}$, that will play the role of successor for all such $i$. To do so, recall that $\tf$ is embedded in the sphere, and consider a point $v_*$ in the sphere that does not belong to the support of $\tf$. Denote by $e_{\infty}$ a small neighbourhood of $v_*$ not intersecting $\tf$ or any of its corners. We will also refer to $e_{\infty}$ as a corner in what follows. We set
\[
\ell(v_*) = \ell(e_{\infty}) = \min_j \ell(e_j) - 1.
\]
We can then define, for every $0 \leq i < \infty$, the successor of the corner $e_i$ as $s(e_i) = e_{s(i)}$, where we consider $e_{s(i)}$ to be its associated corner.

The reverse CVS construction involves drawing, for every $i \in \{0, 1, . . . , 2n - 1\}$, an edge, that we will call an \textbf{arc}, from the corner $e_i$ to the corner $s(e_i)$, without crossing $\tf$, $v_* $, or another arc (see Figure~\ref{fig:CVS}). This is always possible; see \cite[Lemma 2.3.2]{MiermontStFlour2014}. We denote by $\mathfrak{q}$ the embedded graph with vertex set $V(\tf)\cup{v_*}$, and edge set formed by the arcs.

A first property of $\mathfrak{q}$, which is contained in the stronger statement~\Cref{thm:chassaing-schaeffer}, is that it is a quadrangulation with $n$ faces. Note that it comes equipped with a distinguished vertex $v_*$, but for now it is not a rooted quadrangulation. To fix this root, we use an extra parameter $\epsilon \in \{-1, 1\}$. If $\epsilon = 1$ we let the root edge of $\mathfrak{q}$ be the arc linking $e_0$ with $s(e_0)$, and oriented from $s(e_0)$ to $e_0$. If $\epsilon = -1$, the root edge is this same arc, but oriented from $e_0$ to $s(e_0)$. We have thus defined a mapping $\Phi$ from $\mathbb{T}_n \times \{-1, 1\}$ to the set $\mathcal{Q}^{\bullet}_n$ of pointed rooted planar quadrangulations $(\mathfrak{q}, v_*)$.

\begin{theorem}[Proposition 2 and Theorem 4 in~\cite{chassaing2004random}] For every $n \geq 1$, $\Phi$ is a bijection from $\mathbb{T}_n \times \{-1, 1\}$ onto $\mathcal{Q}^{\bullet}_n$.
\label{thm:chassaing-schaeffer}
\end{theorem}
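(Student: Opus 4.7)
The plan is to prove Theorem~\ref{thm:chassaing-schaeffer} by first verifying that $\Phi$ is well-defined, i.e.\ that the embedded graph $\mathfrak{q}$ is indeed a pointed rooted planar quadrangulation with $n$ faces, and then by constructing an explicit inverse $\Psi: \mathcal{Q}_n^\bullet \to \mathbb{T}_n \times \{-1,+1\}$.

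For well-definedness, the graph $\mathfrak{q}$ has $n+2$ vertices (the $n+1$ vertices of $\tf$ plus $v_*$) and $2n$ arcs (one per corner of $\tf$). Connectedness holds because iterating the successor $e_i, s(e_i), s(s(e_i)),\dots$ produces a chain of corners whose labels strictly decrease by one at each step, and therefore terminates at $v_*$ in finitely many steps. For the face count I would carry out a local analysis at each tree edge $\{u,u'\}$, splitting into cases based on $\ell(u')-\ell(u)\in\{-1,0,+1\}$, and showing that the two corners adjacent to the edge on a given side have arcs which, together with a portion of the contour of $\tf$, close up into a quadrilateral. Applied on both sides of each of the $n$ tree edges, this yields $n$ quadrilateral faces, consistent with Euler's formula $(n+2)-2n+n=2$ on the sphere.

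The heart of the proof is constructing the inverse. Given $(\mathfrak{q},v_*)\in\mathcal{Q}_n^\bullet$ with root edge $e_r$, I would use the bipartiteness of quadrangulations to label each vertex by $d(v):=d_\mathfrak{q}(v,v_*)$, so that neighbouring vertices differ by exactly one. Each face then has one of two label patterns read cyclically around its boundary: \emph{simple}, of the form $(k,k{+}1,k{+}2,k{+}1)$, or \emph{confluent}, of the form $(k,k{+}1,k,k{+}1)$. I would then draw one oriented diagonal in each face according to a deterministic local rule compatible with the orientation of the sphere (in a simple face, join the corner labelled $k{+}2$ to a suitable corner labelled $k{+}1$; in a confluent face, join the two $k{+}1$-corners). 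Erasing the $2n$ original edges leaves $n$ arcs on $V(\mathfrak{q})\setminus\{v_*\}$, and an Euler/connectedness count shows that this is a rooted plane tree $\tf$ with $n$ edges. Setting $\ell(v):=d(v)-d(v_0)$, where $v_0$ is the root vertex of $\mathfrak{q}$, produces an admissible label function on $\tf$, while the root edge $e_r$ and its orientation encode the root of $\tf$ together with the parameter $\epsilon\in\{-1,+1\}$.

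The main obstacle is verifying that $\Psi\circ\Phi=\mathrm{id}$ and $\Phi\circ\Psi=\mathrm{id}$, which reduces to the following local claim: inside each face of $\mathfrak{q}$ the successor (in the contour order of $\tf$) of the corner of higher label is precisely the corner reached by the diagonal drawn by $\Psi$. This is a face-by-face inspection, combined with the global observation that labels along the contour exploration of $\tf$ change by at most one at each step, so that the successor relation is completely determined by the local label configuration. The extra factor $\{-1,+1\}$ in the domain corresponds to the two possible orientations of the root edge of $\mathfrak{q}$, exactly one of which is compatible with the orientation of the arc canonically linking $e_0$ to $s(e_0)$; the parameter $\epsilon$ records which. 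The arguments are essentially local and combinatorial, but I expect most of the effort to lie in the careful bookkeeping of corner orderings and the verification that orientations are preserved throughout.
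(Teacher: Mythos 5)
The paper does not prove this statement: it is quoted directly from Chassaing--Schaeffer (Proposition 2 and Theorem 4 of that reference), and your sketch is essentially the standard argument given there and in Miermont's St Flour notes that this section follows -- check that the image of $\Phi$ is a rooted pointed quadrangulation with $n$ faces, then invert using the labels $d_{\mathfrak{q}}(\cdot,v_*)$ and the simple/confluent face rule, with the identities $\Psi\circ\Phi=\mathrm{id}$ and $\Phi\circ\Psi=\mathrm{id}$ reduced to a local face-by-face verification. The one detail to correct is your normalisation $\ell(v):=d(v)-d(v_0)$ with $v_0$ the root vertex of $\mathfrak{q}$: under the paper's convention, when $\epsilon=+1$ the root vertex of $\mathfrak{q}$ is the vertex at the corner $s(e_0)$, whose distance label is one less than that of the tree root, so you should instead subtract the label of the endpoint of the root edge with the \emph{larger} distance to $v_*$ (that endpoint is the root of $\tf$, and the orientation of the root edge then recovers $\epsilon$); otherwise the recovered pair fails $\ell(u_0)=0$ and lands outside $\mathbb{T}_n$.
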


A very important aspect of the CVS bijection is that the labels of the tree correspond to some of the distances in the quadrangulation. Indeed, if $(\mathfrak{q}, v_*) = \Phi((\tf, \ell), \epsilon)$, then, for every $v \in V(\mathfrak{q})$,
\begin{equation}
d_{\mathfrak{q}}(v, v_*) = \ell(v) - \ell(v_*).
\end{equation}

\begin{remark}\label{rmk:tree paths to map paths}
Another property of the CVS bijection is that every edge in $\tf$ is inscribed inside a face of $\q$. Hence, any path of adjacent vertices in $\gamma_0, \gamma_1, \ldots, \gamma_m$ in $\tf$ can be extended (not necessarily uniquely) to a path in $\q$ by adding at most one extra vertex between each pair of the form $\gamma_i$ and $\gamma_{i+1}$. Hence, if the extended path intersects some set $A \subset V(\q)$, then (the image of) the original path is at distance at most $1$ from $A$.
\end{remark}

For $(\tf,\ell) \in \mathbb{T}_n$, recall that $u_0, u_1, \ldots, u_{n}$ denotes the lexicographical ordering of the tree vertices. Let us define for $0 \leq i,j \leq n$
\begin{equation}
d^{\circ}_{\tf,\ell}(i,j):=\ell (u_i) + \ell(u_j)-2\inf_{i\wedge j \leq k \leq i \vee j}\ell(u_k) +2.
\label{eq:def-d-circ-discrete}
\end{equation}
We also write $d_{\mathfrak{q}}(i,j)$ for $d_{\mathfrak{q}}(u_i,u_j)$. We then have the following bound.

\begin{lemma}[Lemma 3.1 in~\cite{LeGall2007BrownianMapTopological}]\label{lem:discrete-bound-d-dstar}
For every $i,j \in \{0,1, \dotsm, n\}$,
\[
d_{\mathfrak{q}}(i,j) \leq d^{\circ}_{\tf,\ell}(i,j).
\]
\end{lemma}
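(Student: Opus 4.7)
The plan is to use the successor chains in the CVS construction to build an explicit path in $\q$ from $u_i$ to $u_j$ whose length matches $d^\circ_{\tf,\ell}(i,j)$. Recall that applying the successor once to a corner of label $L$ yields an arc in $\q$ to a corner of label $L-1$, so iterating $k$ times produces a path in $\q$ of length $k$. The strategy is to show that the successor chains started from corners of $u_i$ and of $u_j$ meet at a common corner after exactly $\ell(u_i)-m+1$ and $\ell(u_j)-m+1$ steps respectively, where $m := \inf_{i\wedge j \leq k \leq i\vee j} \ell(u_k)$.

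Without loss of generality assume $i \leq j$, and pick contour times $a_i < a_j$ with $e_{a_i}^- = u_i$ and $e_{a_j}^- = u_j$; by the relation between lexicographic and contour orders one has
\[
m \;=\; \inf_{a_i \leq k \leq a_j} \ell(e_k).
\]
First I would define $\tau := \inf\{k > a_i : \ell(e_k) = m-1\}$ and observe $\tau > a_j$ (since no label $m-1$ occurs in $[a_i,a_j]$). Because corner labels change by at most $\pm 1$ along the contour, every intermediate value between $\ell(u_i)$ and $m-1$ is attained for the first time, after $a_i$, at a strictly increasing sequence of contour times, and these are precisely the corners visited by the successor chain started at $e_{a_i}$. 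Hence the chain reaches the label $m-1$ after exactly $\ell(u_i)-m+1$ steps, at contour time $\tau$. The same argument applied to the chain started at $e_{a_j}$: by definition of $\tau$, no label $m-1$ appears in $(a_j,\tau)$, so the first corner of label $m-1$ after $a_j$ is also $e_{\tau}$, reached in $\ell(u_j)-m+1$ steps.

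Concatenating the two successor chains at their common endpoint $e_\tau$ gives a walk in $\q$ from $u_i$ to $u_j$ of length
\[
\bigl(\ell(u_i)-m+1\bigr)+\bigl(\ell(u_j)-m+1\bigr) \;=\; \ell(u_i)+\ell(u_j)-2m+2 \;=\; d^\circ_{\tf,\ell}(i,j),
\]
which yields the claimed bound on $d_\q(u_i,u_j)$. The degenerate cases $i=j$ (where $d^\circ = 2 \geq 0$) or $\ell(u_i)=m$ or $\ell(u_j)=m$ (in which the corresponding successor chain is empty or of length $1$) are handled by the same formula.

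The only delicate point is the identification of the meeting corner $e_\tau$, i.e.\ verifying that the two successor chains genuinely coincide at step $\ell(u_i)-m+1$ and $\ell(u_j)-m+1$; this is where the hypothesis that labels change by unit steps along the contour, together with the fact that $m$ is the minimum on the whole interval $[a_i,a_j]$ (not just on one side), is used crucially. Everything else is a direct consequence of the definition of the successor and of the fact that each successor step yields an arc of the CVS quadrangulation.
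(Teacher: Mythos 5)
Your successor-chain argument is the standard (and correct) proof of the corner-indexed version of this bound, i.e.\ of Le Gall's actual Lemma 3.1, where the minimum is taken over all \emph{corners} visited by the contour between the two chosen corners. The gap is in the bridging step where you assert that, for suitable contour times $a_i<a_j$ visiting $u_i$ and $u_j$, one has $m=\inf_{a_i\le k\le a_j}\ell(e_k)$ with $m$ the minimum over the \emph{lexicographic} interval $\{u_k: i\le k\le j\}$. This is false in general: whatever corners you choose, the contour between a corner of $u_i$ and a corner of $u_j$ must cross every edge of the tree path $[[u_i,u_j]]$, so the contour interval $[a_i,a_j]$ contains corners of all the strict ancestors of $u_i$ up to $u_i\wedge u_j$; these vertices have lexicographic index strictly less than $i$, hence are not among the $u_k$ with $i\le k\le j$, and their labels may lie strictly below $m$. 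In that case a corner of label $m-1$ can occur inside $(a_i,a_j]$, your time $\tau$ satisfies $\tau\le a_j$, and the two chains do not merge at level $m-1$; the argument only yields $d_{\mathfrak q}(i,j)\le \ell(u_i)+\ell(u_j)-2\min_{a_i\le k\le a_j}\ell(e_k)+2$, which is weaker than the claimed bound whenever the contour minimum is below $m$.

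Moreover, this gap cannot be repaired for the statement with the lexicographic minimum, because that statement fails as written. The discrete cactus bound (the analogue of Lemma \ref{lem:D snake upper bound}, \cite[Proposition 2.3.8(ii)]{MiermontStFlour2014}) gives $d_{\mathfrak q}(u_i,u_j)\ge \ell(u_i)+\ell(u_j)-2\min_{w\in[[u_i,u_j]]}\ell(w)$, which exceeds $d^{\circ}_{\tf,\ell}(i,j)$ as soon as the label minimum along the tree path drops below $m-1$. For a concrete example, take the tree consisting of the path $r,c_1,d_1,d_2,d_3,d_4$ with labels $0,-1,-2,-1,0,1$, together with a second child $c_2$ of the root, with label $1$, attached to the right of $c_1$. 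Then $u_i=d_4$ and $u_j=c_2$ are consecutive in lexicographic order, $m=1$ and $d^{\circ}_{\tf,\ell}(i,j)=2$, whereas in the associated quadrangulation both $d_4$ and $c_2$ have degree one (no corner carries label $2$) and one checks directly that $d_{\mathfrak q}(d_4,c_2)=6$. So the inequality you are asked to prove only holds with the minimum taken over the contour sequence of corners (equivalently, over all vertices the contour visits between the chosen corners), which is the form in which it is stated in the cited reference and which your chain construction does establish; passing from that version to the lexicographic parametrisation used later in the paper requires the additional (standard, but nontrivial at the discrete level) comparison of the two parametrisations rather than the identity of minima you invoked.
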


In the rest of this paper, we will consider random admissible labelled trees $(\Tn, \ell_n)$, constructed as follows. Let $\pof$ be a probability measure on $\ZZ_{\geq 0}$ such that $\pof(0)>0$ and with expectation $1$ and let $Y$ be a real random variable. We assume that 
\begin{equation}
\exists \,c \in (0, \infty) \, : \, \pof[x,\infty) \sim cx^{-\alpha}, \, p_Y(0)<1 \textnormal{ and } p_Y(-1)=p_Y(1)=\frac{1-p_Y(0)}{2}.
\label{eq:hyps-on-mu-and-Y}
\end{equation}
We anticipate that all of our results are adaptable to any law $\pof$ in the domain of attraction of an $\alpha$-stable law, but we focus on the aforementioned case for ease of notation. 

For each $n \geq 2$, let $\Tn$ be a Bienaymé--Galton--Watson tree with offspring distribution $\pof$, conditioned to have $n$ edges (this will be well-defined for all sufficiently large $n$ under our assumptions). Recall that with our conventions, $\Tn$ comes pre-equipped with a root edge, going from the root vertex to its first child in the lexicographical ordering.

Conditionally on $\Tn$, we endow each vertex $u$ of $\Tn$ with a random label $\ell_n(u) \in \R$, setting $\ell_n(\varnothing)=0$ for the root $\varnothing$ of the tree, and, for $v \neq \varnothing$, 
\begin{equation}\label{eqn:ell n def}
\ell_n(v)=\sum_{u \preceq v} Y(u),
\end{equation}
where $(Y(u))_{u \in \Tn\setminus\{\varnothing\}}$ are i.i.d. copies of $Y$. Note that $(\Tn, \ell_n) \in \mathbb{T}_{n}$.

We moreover choose $\epsilon_n$ uniformly in $\{-1, 1\}$, independently for each $n$. The quadrangulation $\Mn$ is defined as the image of the triple $(\Tn, \ell_n, \epsilon_n)$ under the reverse CVS bijection, i.e. $\Phi (\Tn, \ell_n, \epsilon_n)$. We moreover let $\dmn$ denote the graph metric on $\Mn$, let $\partial_n$ denote the extra vertex added when constructing $\Mn$, let $\rho_n$ denote its root vertex, and let $\pi_n$ denote the canonical projection from $\{0, 1, \ldots, n\}$ to $V(\Mn) \setminus \{\partial_n\}$ (via the vertex with that lexicographical index in $\Tn$: it is essentially the identity map). We also let $\nu_n$ denote the uniform probability measure on the vertices of $\Mn$, and let $d^{\circ}_n$ denote $d^{\circ}_{T_n, \ell_n}$ as defined in \eqref{eq:def-d-circ-discrete}.

\begin{remark}
By construction, each vertex in $V(\Mn) \setminus \{\rhoo_n\}$ is incident to at least one edge per corner $c$ that it had in $\Tn$ (this edge is the arc that connects $c$ to its successor), so that the vertex degrees in $\Mn$ are at least as heavy-tailed as in $\Tn$. In that regard, our stable quadrangulations are similar to the images by Tutte's bijection of Boltzmann stable maps, that are planar quadrangulations with stably-decaying vertex degrees, and that should have the same scaling limit as the duals of these Boltzmann maps. In our model however, it is not clear yet if typical vertex degrees are indeed stably-decaying, since each corner of $\Tn$ is not only the origin of a successor arc, but potentially the recipient of many. We plan to address this question in future works.
\label{rem: vertex degrees}
\end{remark}

\subsection{Subsequential scaling limits of random quadrangulations}

The aim of this subsection is to show that the sequence $(\Mn, n^{-\frac{\alpha-1}{2\alpha}} \dmn, \nu_n)$ is tight in the Gromov-Hausdorff-Prokhorov topology. This follows by the same arguments as in the Brownian case, using the main result of \cite{marzouk-brownianstable}.

Let $(\Tn,\ell_n, \epsilon_n)$ be defined as in~\cref{sctn:CVS} (around \eqref{eqn:ell n def}), and recall that $\Mn = \Phi (\Tn, \ell_n, \epsilon_n)$, with root vertex $\rho_n$. On $\Mn$ are defined a distance $\dmn$ and a pseudo-distance $d_n^{\circ}$, and we define these on $\{0, \ldots, n\}^2$ as well using the pushforward by the lexicographical ordering. Let $X_n$ be the Łukasiewicz path of $\Tn$, let $H_n$ be the height process of $\Tn$, both as defined in and below \eqref{eqn:height function def disc}, and extend $H_n$, $X_n$, $\ell_n$, $d_n^{\circ}$ and $\dmn$ to the entire time interval $[0, n]$ and $[0, n]^2$ by linear interpolation. Also let $X^{\text{exc}}$ be as in \cref{sctn:Levy and Ito}. We denote by $H$ the height function defined from $X^{\text{exc}}$ as in \eqref{eqn:height def}, and by $W$ the Brownian snake driven by $H$ as defined in \cref{section:stablesnake}, with tip $\Wt$. The key result is the following.

\begin{theorem}[Special case of Theorem 1.1 in~\cite{marzouk-brownianstable}] \label{thm:marzouk}
Assume that \eqref{eq:hyps-on-mu-and-Y} holds, set $\sigma^2_Y$ to be the variance of $Y$ and set $c' = \left(\frac{c}{C_{\alpha}}\right)^{1/\alpha}$ (recall \eqref{eqn:pi and C def}).

Then the following convergence holds jointly in distribution with respect to the Skorokhod-$J_1$ topology.
\[
\left(\frac{1}{c'}n^{-\frac{1}{\alpha}} X_n(nt), c'n^{-(1-\frac{1}{\alpha})} H_n(nt), \left(\frac{c'n^{-(1-\frac{1}{\alpha})}}{\sigma_Y^2}\right)^{1/2}\ell_n(nt)\right)_{t \in [0,1]} \xrightarrow[n \to \infty ]{(d)} \Big(\X_t, H_t,\Wt_t \Big)_{t \in [0,1]}.
\]
\end{theorem}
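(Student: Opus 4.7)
The first two coordinates, namely the rescaled {\L}ukasiewicz path and height process, are already covered by Duquesne's theorem (cited around \eqref{eqn:Duquesne contour conv}); the entire content of the statement therefore lies in adding the label process $\ell_n$ to the joint convergence and identifying its scaling limit as the tip $\widehat W$ of the Brownian snake driven by $H$. The plan is to first invoke Skorokhod's representation theorem to pass to a probability space on which Duquesne's convergence
\[
\left(\tfrac{1}{c'}n^{-\frac{1}{\alpha}}X_n(n\cdot),\, c' n^{-(1-\frac{1}{\alpha})}H_n(n\cdot)\right) \longrightarrow \left(X^{\mathrm{exc}},\, H\right)
\]
holds almost surely. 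On that space we then condition on the sequence of trees $(\Tn)_n$ and show that $\ell_n$, viewed as a functional of the i.i.d.\ centered bounded increments $(Y(u))_{u\in \Tn}$, converges (jointly with what we already have) to a Gaussian field indexed by $[0,1]$ whose covariance is prescribed by $H$, which characterises $\widehat W$.

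The finite-dimensional convergence is the easy ingredient: for any fixed $0\le t_1<\dots<t_k\le 1$, conditionally on $\Tn$, the vector $(\ell_n(\lfloor n t_i\rfloor))_{i\le k}$ is a centered Gaussian-type sum whose covariance matrix is proportional to $\bigl(|u_{\lfloor n t_i\rfloor}\wedge u_{\lfloor n t_j\rfloor}|\bigr)_{i,j}$; recall that $|u\wedge v|$ coincides with the running minimum $m_{\cdot,\cdot}$ of $H_n$ between the two visit times. Using the a.s.\ convergence of $c' n^{-(1-1/\alpha)}H_n$ together with a (conditional) multivariate Lindeberg-type CLT, these vectors, rescaled by $\bigl(c'n^{-(1-1/\alpha)}/\sigma_Y^2\bigr)^{1/2}$, converge to a centered Gaussian vector with covariance $(m_{t_i,t_j})_{i,j}$, which is exactly the law of $(\widehat W_{t_i})_{i\le k}$ conditionally on $H$.

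The main obstacle, as usual for snake-type limits, is tightness of the rescaled label process in the Skorokhod--$J_1$ (actually uniform) topology on $[0,1]$. I would establish it through a conditional Kolmogorov moment bound: for $s<t$, the difference $\ell_n(nt)-\ell_n(ns)$ is, conditionally on $\Tn$, a sum of at most $d_{\Tn}(u_{\lfloor ns\rfloor},u_{\lfloor nt\rfloor})$ independent bounded centered increments, so for any even integer $p$,
\[
\mathbb E\!\left[|\ell_n(nt)-\ell_n(ns)|^{p}\,\Big|\,\Tn\right]\le C_p\,\bigl(H_n(\lfloor nt\rfloor)+H_n(\lfloor ns\rfloor)-2 m_{\lfloor ns\rfloor,\lfloor nt\rfloor}^{(n)}\bigr)^{p/2}.
\]
Combined with Duquesne's moment control on tree distances, which yields $\mathbb E\bigl[d_{\Tn}(u_{\lfloor ns\rfloor},u_{\lfloor nt\rfloor})^{p/2}\bigr]\le C|t-s|^{(1-\frac{1}{\alpha})p/2-\delta}n^{(1-\frac{1}{\alpha})p/2}$ for $p$ large enough, the proper rescaling gives moment bounds of the form $\mathbb E\bigl[|\widetilde\ell_n(t)-\widetilde\ell_n(s)|^{p}\bigr]\le C|t-s|^{1+\eta}$ for some $\eta>0$, which is exactly the Kolmogorov criterion. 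This gives tightness of the rescaled labels in $C([0,1],\mathbb R)$.

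Combining tightness with the finite-dimensional convergence identifies every subsequential limit of the triple with $(X^{\mathrm{exc}},H,\widehat W)$, yielding the claimed joint convergence. The non-trivial technical input behind this scheme is Duquesne's coupling estimates that allow one to carry the conditional CLT and the moment bounds uniformly in $n$ along convergent subsequences of trees; beyond those, the rest is a standard Donsker/Kolmogorov routine, and this is precisely how Marzouk packages the proof in~\cite{marzouk-brownianstable}, which we therefore simply invoke.
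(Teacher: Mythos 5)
The paper does not actually prove this statement: it is imported wholesale as a special case of Theorem 1.1 of \cite{marzouk-brownianstable}, with the only added remark being that the joint convergence with the {\L}ukasiewicz path $X_n$ is implicit in Marzouk's proof because he works on a probability space where Duquesne's convergence \eqref{eqn:Duquesne contour conv} holds almost surely. Your final move -- ``simply invoke Marzouk'' -- therefore matches the paper's treatment exactly, and your use of Skorokhod representation to carry the $(X_n,H_n)$ coordinates along is the same observation the paper makes; to that extent the proposal is fine.

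However, the sketch you wrap around the citation should not be mistaken for a proof, and if it were offered as one it has a genuine gap at its load-bearing step. The bound $\E\bigl[d_{\Tn}(u_{\lfloor ns\rfloor},u_{\lfloor nt\rfloor})^{p/2}\bigr]\le C|t-s|^{(1-\frac{1}{\alpha})p/2-\delta}n^{(1-\frac{1}{\alpha})p/2}$, which you attribute to ``Duquesne's moment control'' and ``Duquesne's coupling estimates,'' is not in \cite{duquesne2003limit}: that paper gives convergence in distribution of the rescaled paths, not uniform-in-$n$ moment bounds of arbitrary order for increments of the \emph{conditioned} height process. Establishing such bounds (uniformly over $i<j$, including near hubs and near the endpoints of the conditioned excursion) is precisely the hard technical content in the stable case, and it is essentially why Marzouk's paper exists: his tightness argument proceeds through the {\L}ukasiewicz path and careful control of the contribution of high-degree ancestors, not through the ``standard Donsker/Kolmogorov routine'' you describe, and the nontrivial tail condition he imposes on the displacement law shows that the naive conditional-Kolmogorov scheme is not sufficient in general (here it is only saved by the boundedness of $Y$). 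The finite-dimensional part of your sketch (conditional covariance $\sigma_Y^2\,|u_i\wedge u_j|$, conditional Lindeberg CLT, identification with $\widehat W$ given $H$) is correct. So: as a justification of the theorem the citation suffices and agrees with the paper; as a self-contained argument, the tightness step is unproven and misattributed, and the claim that this is ``precisely how Marzouk packages the proof'' is not accurate.
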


Although not stated in \cite[Theorem 1.1]{marzouk-brownianstable}, it is implicit in the proof that we can add the convergence of the Łukasiewicz paths to the above statement (Marzouk works on a probability space where \eqref{eqn:Duquesne contour conv} holds and shows that the snakes converge too).

From the convergence result of~\Cref{thm:marzouk} we derive, similarly to~\cite{LeGall2007BrownianMapTopological}, some preliminary results on subsequential limits for the distances in $\Mn$. Like in the Brownian case, a first straightforward consequence of~\Cref{thm:marzouk} is that
\begin{equation}
\left(\left(\frac{c'n^{-(1-\frac{1}{\alpha})}}{\sigma_Y^2}\right)^{1/2}d^{\circ}_n(ns,nt)\right)_{0 \leq s,t \leq 1}\xrightarrow[n \to \infty]{(d)}\left(D^{\circ}(s,t)\right)_{0 \leq s,t \leq 1},
\end{equation}
where $D^{\circ}$ is defined as in~\eqref{eq:def-Do}. Together with~\Cref{lem:discrete-bound-d-dstar}, this immediately implies the following result, similarly to Proposition 3.2 and its corollary (11) in~\cite{LeGall2007BrownianMapTopological} for the Brownian case.

\begin{proposition}
\label{prop:tightness-metric-spaces}
The sequence of the laws of the processes $\left(n^{-\frac{\alpha-1}{2\alpha}}\dmn(ns,nt)\right)_{0 \leq s,t \leq 1}$ is tight in the space of probability measures on the continuous functions from $[0,1]^2$ to $\R$.

In particular, subsequential limits exist: for any sequence $(n_k)_{k \geq 1}$ there exists a subsequence $(k_m)_{m \geq 1}$ for which there is a limiting function $D$ from $[0,1]^2$ to $\R_+$ along which we have the following joint convergence in distribution:
\begin{align}
\begin{split}\label{eqn:subsequential limits}
&\left(\frac{1}{c'}n^{-\frac{1}{\alpha}}X_n(nt), c'n^{-\frac{\alpha-1}{\alpha}}H_n(nt),\left(\frac{c'n^{-(1-\frac{1}{\alpha})}}{\sigma_Y^2}\right)^{1/2}\ell_n(nt),\left(\frac{c'n^{-(1-\frac{1}{\alpha})}}{\sigma_Y^2}\right)^{1/2}\dmn(ns,nt) \right)_{s,t \in [0,1]}\\
&\qquad \xrightarrow[n \to \infty]{(d)}\left(\X_t, H_t, \Wt_t, D(s,t)\right)_{s,t \in [0,1]}.
\end{split}
\end{align}
\end{proposition}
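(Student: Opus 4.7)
The strategy is to follow the Brownian case argument from~\cite{LeGall2007BrownianMapTopological}, with \Cref{thm:marzouk} as the main analytic input. The first step is to lift the convergence of labels to a convergence of the functional $d^{\circ}_n$. Since $\Wt$ is almost surely continuous by \Cref{lem:height holder}, the Skorokhod--$J_1$ convergence of the rescaled label process in \Cref{thm:marzouk} is in fact uniform on $[0,1]$. As the functional $(\phi, s, t) \mapsto \phi(s) + \phi(t) - 2\inf_{s \wedge t \leq u \leq s \vee t} \phi(u)$ is continuous on $C([0,1]) \times [0,1]^2$ with respect to uniform convergence, the continuous mapping theorem yields
\begin{equation}\label{eqn:dcirc-conv-plan}
\left(\tfrac{c'n^{-(1-1/\alpha)}}{\sigma_Y^2}\right)^{1/2} d^{\circ}_n(ns, nt) \xrightarrow[n \to \infty]{(d)} \Wt_s + \Wt_t - 2\inf_{s \wedge t \leq u \leq s \vee t} \Wt_u,
\end{equation}
in the space of continuous functions of $(s, t) \in [0,1]^2$, jointly with the convergences of \Cref{thm:marzouk}. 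The limit is continuous in $(s,t)$ and vanishes on the diagonal $\{s=t\}$.

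Next, combining \Cref{lem:discrete-bound-d-dstar} with the triangle inequality gives, for all $s, s', t, t' \in [0,1]$,
\[
|\dmn(ns, nt) - \dmn(ns', nt')| \leq \dmn(ns, ns') + \dmn(nt, nt') \leq d^{\circ}_n(ns, ns') + d^{\circ}_n(nt, nt').
\]
After rescaling by $n^{-(\alpha-1)/(2\alpha)}$, \eqref{eqn:dcirc-conv-plan} shows that the right-hand side converges in distribution to a continuous process vanishing on $\{s=s'\} \cup \{t=t'\}$. Hence, for every $\epsilon > 0$,
\[
\lim_{\delta \downarrow 0} \limsup_{n \to \infty} \mathbb P \!\left( \sup_{|s-s'| \vee |t-t'| \leq \delta} n^{-\frac{\alpha-1}{2\alpha}} |\dmn(ns, nt) - \dmn(ns', nt')| > \epsilon \right) = 0,
\]
giving the desired equicontinuity of the rescaled distance processes. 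Together with the trivial bound $\dmn(0,0)=0$ and the uniform upper bound $\dmn \leq d^{\circ}_n$ (which is tight by the first step), the Arzel\`a--Ascoli criterion yields tightness of the rescaled distances in the space of probability measures on $C([0,1]^2, \R)$, proving the first assertion.

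For the joint convergence, tightness of the four-tuple in \eqref{eqn:subsequential limits} follows from the tightness of each component. By Prokhorov's theorem, from any subsequence we can extract a further subsequence along which the joint vector converges in distribution; the first three components are identified with $(\X, H, \Wt)$ by \Cref{thm:marzouk}, while the fourth defines a subsequential limit $D \in C([0,1]^2, \R_+)$ by the equicontinuity obtained above. The main technical point is the verification of equicontinuity, which follows immediately from the triangle inequality combined with the convergence of $d^{\circ}_n$ to a continuous limit, so no fundamentally new ideas beyond the Brownian case are required.
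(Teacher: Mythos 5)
Your proposal is correct and follows essentially the same route as the paper, which simply invokes the Brownian-case argument of Le Gall (Proposition 3.2 and Equation (11) of \cite{LeGall2007BrownianMapTopological}) combined with \Cref{thm:marzouk}: convergence of the rescaled $d^{\circ}_n$, the bound $\dmn \leq d^{\circ}_n$ plus the triangle inequality for equicontinuity, and Prokhorov extraction with identification of the first three marginals. (The only slip is cosmetic: the limit $D^{\circ}(s,s')+D^{\circ}(t,t')$ vanishes when $s=s'$ \emph{and} $t=t'$, not on the union, but what your argument actually uses --- uniform continuity of the limit and its vanishing on the diagonal --- is exactly right.)
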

\begin{proof}
The first statement has the same proof as \cite[Proposition 3.2]{LeGall2007BrownianMapTopological}. The second statement is the analogue of \cite[Equation (11)]{LeGall2007BrownianMapTopological} and comes from combining the first statement with \cref{thm:marzouk}.
\end{proof}

It is important to note that a priori, the limiting function $D$ may depend on the subsequence. In what follows, we only consider limits along such a subsequence $(n_k)_{k \geq 1}$. By the Skorokhod representation theorem, we can further assume that the convergence~\eqref{eqn:subsequential limits} holds almost surely. This has the following consequence, that is similar the first assertion of~\cite[Theorem 3.4]{LeGall2007BrownianMapTopological} in the Brownian case.

\begin{corollary}
The sequence of the laws of the metric spaces $(V(\Mn),n^{-\frac{\alpha-1}{2\alpha}}\dmn, \nu_n, \rho_n)$ is tight in the space of probability measures on the space $\mathbb{F}^c$ of isometry classes of compact rooted metric spaces equipped with a finite Borel measure, endowed with the pointed Gromov-Hausdorff-Prokhorov topology. In particular, for any sequence $(n_k)_{k \geq 1}$ there exists a subsequence $(k_m)_{m \geq 1}$ for which there exists a limiting compact rooted metric-measure space $(\M, D, \nu, \rho)$ such that
\begin{align}
\begin{split}\label{eqn:subsequential limits m spaces}
\left(V(M^{(\alpha)}_{n_{k_m}}),\frac{\sqrt{c'}}{\sigma_Y}n_{k_m}^{-\frac{\alpha-1}{2\alpha}}d_{n_{k_m}}, \nu_{n_{k_m}}, \rho_{n_{k_m}}\right)\xrightarrow[n \to \infty]{(d)} (\M, D, \nu, \rootm).
\end{split}
\end{align}
Moreover, $\M$ is a quotient space of $\Ma=\T/\approx$, as defined after~\eqref{eq:def-D*}, and in particular there is a canonical projection $\pi: [0,1] \to \M$. The limiting metric is the pushforward of the function $D$ appearing in \eqref{eqn:subsequential limits} under $\pi$, the limiting measure $\nu$ is the pushforward of Lebesgue measure under $\pi$, and the root $\rootm$ is the pushforward of $0$ under $\pi$.
\end{corollary}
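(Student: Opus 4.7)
The plan is to leverage \cref{prop:tightness-metric-spaces} together with a standard correspondence argument to upgrade the tightness of distance processes to GHP convergence of the metric-measure spaces, while simultaneously identifying the limit as a quotient of $\Ma$. By Skorokhod's representation theorem I may assume the convergence~\eqref{eqn:subsequential limits} holds almost surely along $(n_{k_m})_m$, which I abbreviate to $n \to \infty$. Since $\dmn$ is symmetric and satisfies the triangle inequality, the limiting function $D$ is automatically a pseudodistance on $[0,1]$. Define $s \approx_D t$ iff $D(s,t)=0$, set $\M := [0,1]/\approx_D$ equipped with the quotient metric induced by $D$, let $\pi : [0,1] \to \M$ be the canonical projection, and define $\nu$ to be the pushforward of Lebesgue measure under $\pi$ and $\rootm := \pi(0)$. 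Passing \cref{lem:discrete-bound-d-dstar} to the limit yields $D(s,t) \le \Da^{\circ}(s,t)$, which implies continuity of $\pi$ (hence compactness of $\M$); and since $\Da^*$ is by~\eqref{eq:def-D*} the largest pseudodistance dominated by $\Da^\circ$, in fact $D \le \Da^*$. In particular $s \approx s'$ implies $D(s,s')=0$, so the equivalence $\approx$ is finer than $\approx_D$ and $\M$ is a quotient of $\Ma$ as required.

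For pointed Gromov-Hausdorff convergence, I build a correspondence $R_n \subset V(\Mn) \times \M$ containing $(\rho_n, \rootm)$ as follows: for every $s \in [0,1]$ include the pair $(u_{\lfloor ns \rfloor}, \pi(s))$, where $u_0, \ldots, u_n$ is the lexicographic enumeration of vertices of $\Tn$ (so $\rho_n = u_0$ matches $\pi(0) = \rootm$); and adjoin the pair $(\partial_n, \pi(s_n^*/n))$ where $s_n^* \in \{0, 1, \ldots, n\}$ is any argmin of $\ell_n$. The distortion of $R_n$ with respect to the rescaled metric $\frac{\sqrt{c'}}{\sigma_Y} n^{-(\alpha-1)/(2\alpha)} \dmn$ on $V(\Mn)$ and $D$ on $\M$ is bounded by
\[
2 \sup_{s,t \in [0,1]} \left| \frac{\sqrt{c'}}{\sigma_Y} n^{-\frac{\alpha-1}{2\alpha}} \dmn(ns, nt) - D(s,t) \right| + O\bigl(n^{-\frac{\alpha-1}{2\alpha}}\bigr),
\]
where the error term accounts for the single arc connecting $\partial_n$ to the minimum-label vertex of $\Tn$. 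By the almost-sure version of~\eqref{eqn:subsequential limits} both contributions vanish, so \cref{lem:distortion} yields pointed Gromov-Hausdorff convergence along the subsequence.

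Finally, I embed $V(\Mn)$ and $\M$ isometrically into a common space via $R_n$; the pushforward of $\nu_n$ onto $\M$ then coincides (up to the correspondence) with the empirical measure $\frac{1}{n+2}\bigl( \sum_{i=0}^{n} \delta_{\pi(i/n)} + \delta_{\pi(s_n^*/n)} \bigr)$, which converges weakly to $\nu$ because $\pi$ is continuous and $\frac{1}{n+1}\sum_{i=0}^n \delta_{i/n}$ converges weakly to Lebesgue measure on $[0,1]$. Combined with the previous step, this yields pointed Gromov-Hausdorff-Prokhorov convergence along the subsequence, completing the proof. The main subtlety is the careful handling of $\partial_n$, which contributes a vanishing distortion term and a single atom negligible in the Prokhorov limit; everything else follows mechanically from the almost sure uniform convergence of rescaled distances provided by \cref{prop:tightness-metric-spaces}.
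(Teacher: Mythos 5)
Your proposal is correct and follows essentially the same route as the paper: work on a Skorokhod coupling where \eqref{eqn:subsequential limits} holds almost surely, define $\M=[0,1]/{\sim_D}$, prove pointed Gromov--Hausdorff convergence via the correspondence $\{(u_{\lfloor ns\rfloor},\pi(s))\}$ and \cref{lem:distortion}, upgrade to Prokhorov convergence of the measures in a common embedding, and obtain the quotient statement from $D\leq \Da^{\circ}$ (limit of \cref{lem:discrete-bound-d-dstar}) together with the maximality of $\Da^*$. The only deviations are cosmetic: you keep $\partial_n$ and absorb it into the correspondence and an $o(1)$ atom, whereas the paper discards it at the outset, and your Prokhorov step goes through weak convergence of the empirical measure $\frac{1}{n+2}\sum_i\delta_{\pi(i/n)}$ to $\pi_*\mathrm{Leb}$ (using continuity of $\pi$) instead of the paper's explicit two-sided $r_n$-neighbourhood inclusions, both of which are valid.
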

\begin{proof}
Let us define $C' =\sqrt{c'}/\sigma_Y$ for ease of notation. Throughout the proof we will consider the space $(V(\Mn)\setminus \{ \partial_n \} ,C'n^{-\frac{\alpha-1}{2\alpha}}\dmn, \nu_n, \rho_n)$ in place of $(V(\Mn),C'n^{-\frac{\alpha-1}{2\alpha}}\dmn, \nu_n, \rho_n)$ and for the rest of the proof let $\nu_n$ denote uniform measure on $V(\Mn)\setminus \{\partial_n\}$; by the triangle inequality, the two spaces necessarily have the same scaling limit.
 
Firstly, note that \eqref{eqn:subsequential limits} implies that the sequence $(V(\Mn)\setminus \{\partial_n\},C'n^{-\frac{\alpha-1}{2\alpha}}\dmn)$ is tight in the Gromov-Hausdorff topology by \cite[Theorem 7.4.15]{burago-burago-ivanov}. By the definitions of the various Gromov-Hausdorff and Gromov-Hausdorff-Prokhorov distances, it immediately follows that the spaces are also compact with respect to the pointed Gromov-Hausdorff-Prokhorov topology on adding the roots and measures. Since the space $\mathbb{F}^c$ is Polish (this follows from a very minor extension of \cite[Proposition 8]{miermont2009tessellations}), this implies that there are subsequential limits taking values in $\mathbb{F}^c$.

We now work on a probability space and along a subsequence where \eqref{eqn:subsequential limits} holds almost surely. Our candidate for the limiting metric space is $([0,1]/\sim_D, D)$, where $s \sim_D t \iff D(s,t) = 0$. Note that $([0,1]/\sim_D, D)$ is indeed a compact metric space since all of the relevant properties are inherited directly from \eqref{eqn:subsequential limits}. We write $M=[0,1]/\sim_D$, with root $\{0\}$, and let $\pi$ denote the canonical projection from $[0,1]$ to $\M$.

We first show that the pointed Gromov-Hausdorff distance between $(V(\Mn)\setminus \{\partial_n\},C'n^{-\frac{\alpha-1}{2\alpha}}\dmn, \rho_n)$ and $(\M, D, \{0\})$ vanishes almost surely using \cref{lem:distortion}. In particular, since $\Mn$ is defined as the image of $(\Tn, \ell_n, \epsilon_n)$ under $\Phi$, it follows that $(V(\Mn)\setminus \{\partial_n\},C'n^{-\frac{\alpha-1}{2\alpha}}\dmn, \rho_n)$ is isometric to
\[
\left([0,1] / \sim_n , C'n^{-\frac{\alpha-1}{2\alpha}}\dmn (\lfloor sn \rfloor, \lfloor tn \rfloor), 0 \right),
\]
where $\sim_n$ denotes the equivalence relation $s \sim_n t \iff \dmn \left(\lfloor sn \rfloor, \lfloor tn \rfloor \right)=0$. Recall from Section \ref{sctn:CVS} that the two spaces are actually related via the projection $s \mapsto \pi_n(\lfloor sn \rfloor) \in V(\Mn)\setminus \{ \partial_n \}$.

We construct a correspondence $\mathcal{R}_n$ between $(V(\Mn)\setminus \{ \partial_n \},C'n^{-\frac{\alpha-1}{2\alpha}}\dmn, \rho_n)$ and $(\M,D,\{0\})$ by including all points of the form $(\pi_n(\lfloor ns \rfloor), \pi(s))$ where $s \in [0,1]$. We denote its distortion by $r_n$; it then follows directly from \eqref{eqn:subsequential limits} that $r_n \to 0$ along the same subsequence.

To prove that the measures also converge on this space, we take the Gromov-Hausdorff embedding $F_n = (V(\Mn)\setminus \{ \partial_n \}) \sqcup \M$ endowed with the metric
\[
D_{F_n}(x,y) = \begin{cases}
C'n^{-\frac{\alpha-1}{2\alpha}}\dmn(x,y) & \text{ if } x, y \in V(\Mn)\setminus \{ \partial_n \}\\
D(x,y) & \text{ if } x, y \in \M\\
\inf_{u, v \in \mathcal{R}_n} (C'n^{-\frac{\alpha-1}{2\alpha}}\dmn(x,u) + D(y,v) + \frac{1}{2} r_n) & \text{ if } x \in V(\Mn)\setminus \{ \partial_n \}, y \in \M.
\end{cases}
\]

We now claim that $d^{F_n}_P(\nu_n, \nu) \rightarrow 0$ as $n \rightarrow \infty$. To prove this, for each $0 \leq i \leq n$ we define the interval $I_{n,i} = [\frac{i}{n}, \frac{i}{n} + \frac{1}{n})$. Now take a set $A_n$ of vertices in $\Mn \setminus \{\partial_n\}$, and set
\[
A'_n = \bigcup_{i: \pi_n(i) \in A_n} I_{n,i}.
\]
Let $A_n'' = \pi(A'_n)$. We will show that $A_n'' \subset A_n^{r_n}$, where $E^{\epsilon}$ denotes the $\epsilon$-neighborhood of a set $E$. To this end, take any $v \in A_n''$. By our definitions, there exists $s \in A'_n$ with $v=\pi(s)$ and $s \in I_{n,i}$ for some $i$ with $\pi_n(i) \in A_n$. Consequently, we must have that $i = \lfloor ns \rfloor$, and hence $(u_i, v) \in \mathcal{R}_n$ and $D_F(u_i,v) = \frac{1}{2} r_n$. It follows that $v \in A_n^{r_n}$ and hence that $A_n'' \subset A_n^{r_n}$.

Also note that $\nu_n(A_n) = \nu(A_n'')$ by construction, and so $\nu_n(A_n) \leq \nu(A_n^{r_n})$.

Similarly, take any set $B \subset \M$. We use the same argument to show that $\nu(B) \leq \nu_n(B^{r_n})$. Let $B' = \pi^{-1}(B)$, and 
\[
B_n = \{ \pi_n(i) \colon \exists \ s \in B' \text{ with } s \in I_{n,i} \}.
\]
Clearly $B' \subset \bigcup_{i: \pi_n(i) \in B_n} I_{n,i}$ and so
\begin{equation}\label{eqn:Prokh convergence inclusion}
\nu(B) = Leb(B') \leq Leb\left( \bigcup_{i: \pi_n(i) \in B_n} I_{n,i}\right) = \frac{|B_n|}{n} = \nu_n(B_n).
\end{equation}
To conclude, note that if $v \in B_n$ and $v = \pi_n(i)$, then (by our definitions) there necessarily exists $s \in B'$ with $s \in I_{n,i}$, hence $i = \lfloor sn \rfloor$ and therefore $(v, \pi(s)) \in \mathcal{R}_n$. Moreover $\pi(s) \in B$. Hence $B_n \subset B^{r_n}$, so $\nu_n(B_n) \leq \nu_n(B^{r_n})$, so combining with \eqref{eqn:Prokh convergence inclusion} gives $\nu(B) \leq \nu_n(B^{r_n})$.

It follows that $d^{F_n}_P(\nu_n, \nu) \leq r_n$, and hence converges to zero as $n \rightarrow \infty$.

Moreover, it follows from~\Cref{thm:marzouk} and from the construction of $\Da^*$ that $D \leq \Da^*$, and therefore that $(\M,D, \{0\})$ is a quotient space of $(\T/\approx, \Da^*, \rho)$. Since the roots are the same we will henceforth identify them.
\end{proof}

\begin{remark}
Note that each limiting space $(M, D, \nu, \rho)$ is naturally defined under $\Noo_0$. However, thanks to points (i), (ii) and (iii) just above \eqref{eqn:Ito measure integrate s}, it can equally be defined under $\N^{(s)}$ for any $s>0$ or simply under $\N_0$.
\end{remark}

We stress again that the limiting space $(\M, D, \nu, \rootm)$ a priori depends on the subsequence. Similarly to the Brownian case, the construction allows us to immediately deduce some properties for $D$. This follows by the same proof as \cite[Proposition 3.3]{LeGall2007BrownianMapTopological}.

\begin{proposition} 
\label{prop:properties of D}
The following properties hold almost surely.
\begin {enumerate}[(1)]
\item The function $D$ is a pseudo-distance on $[0,1]$, i.e., for every $s,t,u \in [0,1]$
\begin{itemize}
\item $D(s,s)=0$
\item $D(s,t)=D(t,s)$
\item $D(s,u)\leq D(s,t) +D(t,u)$.
\end{itemize}
\item For every $s,t \in [0,1]$, $D(s,t) \leq \Da^*(s,t) \leq \Da^{\circ}(s,t)$ (recall the definitions from~\eqref{eq:def-Do} and~\eqref{eq:def-D*}).
\item Let $s^*$ denote the location of the unique (see \cref{prop:minimum unique}) infimum of $\Wt$. Then for every $s \in [0,1]$, $D(s^*, s) = \Wt_s-\Wt_{s^*}$. Consequently, for any $r\geq0$, $ \nu (B(\pi(s^*), r)) = \overline{\I}[0, r] $ (recall the definition of $\overline{\I}$ in \eqref{eqn:ISE def rerooted}).
\end{enumerate}
\end{proposition}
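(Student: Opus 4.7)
My plan is to work on the probability space where the joint almost sure convergence \eqref{eqn:subsequential limits m spaces} holds (with $D$ continuous on $[0,1]^2$) and to push each discrete analog of the three statements through the limit.

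For (1), the pseudo-distance properties are immediate transfers from the fact that $\dmn$ is a genuine metric. Symmetry of $D$ is inherited directly. The triangle inequality for $s,t,u \in [0,1]$ is obtained by writing
\[
\dmn(\lfloor ns\rfloor,\lfloor nu\rfloor) \leq \dmn(\lfloor ns\rfloor,\lfloor nt\rfloor) + \dmn(\lfloor nt\rfloor,\lfloor nu\rfloor),
\]
rescaling by $C' n^{-(\alpha-1)/(2\alpha)}$, and sending $n\to\infty$ along the chosen subsequence using the uniform convergence of the rescaled distance function to $D$. The fact that $D(s,s)=0$ follows from the continuity of $D$ together with $\dmn(\lfloor ns\rfloor,\lfloor ns\rfloor)=0$.

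For (2), the second inequality $\Da^* \leq \Da^{\circ}$ is immediate from the definition \eqref{eq:def-D*} of $\Da^*$. For the first, I apply \Cref{lem:discrete-bound-d-dstar} to the discrete labelled tree $(\Tn, \ell_n)$, which gives $\dmn(i,j) \leq d^{\circ}_n(i,j)$ for all $i,j \in \{0,\dots,n\}$. After rescaling and invoking \Cref{thm:marzouk} together with the uniform convergence of the snake tip to $\Wt$, the right-hand side converges to $\Da^{\circ}(s,t)$, yielding $D(s,t) \leq \Da^{\circ}(s,t)$ for every $s,t\in[0,1]$. Since $D$ is already a pseudo-distance by part (1), and $\Da^*$ is by construction the \emph{largest} pseudo-distance dominated by $\Da^{\circ}$, we conclude $D \leq \Da^*$.

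For (3), the key discrete identity is the CVS relation $d_{\mathfrak{q}}(v,v_*) = \ell(v) - \ell(v_*)$ combined with $\ell(v_*) = \min \ell - 1$. Let $s^*_n$ denote (the rescaling of) a vertex index in $\Tn$ attaining the minimum label. Then by the triangle inequality in $\Mn$ and two applications of the CVS formula,
\[
\bigl|\dmn(u_{s^*_n}, u_{\lfloor ns\rfloor}) - (\ell_n(u_{\lfloor ns\rfloor}) - \min \ell_n)\bigr| \leq 2,
\]
so after rescaling this difference vanishes. \Cref{prop:minimum unique} guarantees that $\Wt$ has a unique minimum at $s^*$; combined with the uniform convergence of the rescaled label process to $\Wt$, any subsequential limit of $s^*_n/n$ must equal $s^*$, hence $s^*_n/n \to s^*$. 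Passing to the limit gives $D(s^*, s) = \Wt_s - \Wt_{s^*}$ for every $s\in[0,1]$ by continuity of $D$ and $\Wt$. For the consequence, recall that $\nu$ is the pushforward of Lebesgue measure on $[0,1]$ under $\pi$, so
\[
\nu(B(\pi(s^*),r)) = \mathrm{Leb}\{s \in [0,1] : D(s^*,s) \leq r\} = \mathrm{Leb}\{s \in [0,1] : \Wt_s - \Wt_{s^*} \leq r\} = \overline{\I}[0,r]
\]
by definition \eqref{eqn:ISE def rerooted} of $\overline{\I}$.

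The only genuinely delicate step is the identification of the limit of the discrete minimum locations with $s^*$ in part (3); this is where the almost sure uniqueness from \Cref{prop:minimum unique} is essential, together with the fact that minima of uniformly converging continuous functions converge to the unique minimum of the limit. The other steps are routine transfers through the almost sure convergence.
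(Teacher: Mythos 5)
Your proposal is correct and follows essentially the same route as the paper, whose proof is simply a reference to the argument of Le Gall's Proposition 3.3: transfer the discrete triangle inequality, the bound of \Cref{lem:discrete-bound-d-dstar}, and the CVS distance-to-$v_*$ identity through the almost sure subsequential convergence, using \Cref{prop:minimum unique} to identify the limit of the discrete label-minimising vertices with $s^*$ and then reading off the volume identity from the definition of $\nu$ as a pushforward. The only gloss, shared with the paper's own display identifying $\lim d^{\circ}_n$ with $\Da^{\circ}$, is that the discrete bound literally yields $D(s,t)\leq \Wt_s+\Wt_t-2\inf_{[s\wedge t,s\vee t]}\Wt$, i.e.\ the lexicographical-interval version, and the statement with the minimising interval in \eqref{eq:def-Do} uses the analogous cyclic discrete bound.
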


We end with a useful fact. Recall that $[[u,v]]$ is the direct tree path from $u$ to $v$; the following is morally due to the fact that the geodesic cannot take ``shortcuts" to cut out the direct path.

\begin{lemma}\label{lem:D snake upper bound}
For any $u, v \in \M$,
\begin{align*}
D(u,v) \geq Z_u + Z_v - 2 \min_{x \in [[u,v]]} Z_x.
\end{align*}
\end{lemma}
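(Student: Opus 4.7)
The plan is to establish the discrete counterpart of the inequality via the CVS bijection, then pass to the scaling limit using~\Cref{thm:marzouk}. I first aim to prove the classical discrete \emph{cactus bound}: for any vertices $u_n, v_n \in V(\Tn)$,
\[
\dmn(u_n, v_n) \geq \ell_n(u_n) + \ell_n(v_n) - 2 \min_{x \in [[u_n, v_n]]_{\Tn}} \ell_n(x),
\]
where $[[u_n, v_n]]_{\Tn}$ denotes the tree path in $\Tn$. The proof from the uniform/Brownian case adapts verbatim and relies on two properties of the CVS bijection: (i) along each arc of $\Mn$, the endpoint labels differ by exactly $\pm 1$ (an arc links a corner of label $k$ to its successor, of label $k-1$), so $|\ell_n(x) - \ell_n(y)| \leq \dmn(x, y)$; and (ii) any path in $\Mn$ from $u_n$ to $v_n$ must visit a vertex of label at most $m := \min_{x \in [[u_n, v_n]]_{\Tn}} \ell_n(x)$. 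For (ii), concatenating such a path with the tree path $[[u_n, v_n]]_{\Tn}$, realised in $\Mn$ via~\Cref{rmk:tree paths to map paths}, produces a loop bounding a disc on the sphere; the pointed vertex $\partial_n$, of globally minimum label, lies in one of the two enclosed regions, and any descending successor chain starting from a vertex of label $>m$ in the opposite region must cross the bounding loop at a vertex of label $\leq m$ on the way to $\partial_n$. Applying (i) to the geodesic pieces joining $u_n$ and $v_n$ to this crossing vertex then yields the bound.

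Next I would pass to the limit. Given $u, v \in \M$, pick lifts $s, t \in [0, 1]$ with $\pi(s) = u$, $\pi(t) = v$ and set $u_n := p_{\Tn}(\lfloor sn \rfloor)$, $v_n := p_{\Tn}(\lfloor tn \rfloor)$. Working along the subsequence of~\Cref{prop:tightness-metric-spaces} under a Skorokhod representation where~\eqref{eqn:subsequential limits} holds almost surely, multiplying the discrete bound by $\frac{\sqrt{c'}}{\sigma_Y} n^{-(\alpha-1)/(2\alpha)}$ sends the rescaled left-hand side to $D(u, v)$ and, by~\Cref{thm:marzouk}, the rescaled labels $\ell_n(u_n), \ell_n(v_n)$ to $Z_u, Z_v$. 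Assuming WLOG $s \leq t$ and setting $m_{s, t} := \min_{r \in [s, t]} H_r$, the MRCA $p(s) \wedge p(t)$ lies at height $m_{s, t}$ on the branch from $\rho$ to $p(s)$, and by the snake consistency property $W_s$ parametrises the labels along that branch, so
\[
\min_{x \in [[p(s), p(t)]]} Z_x = \min\Bigl(\inf_{h \in [m_{s, t}, H_s]} W_s(h),\ \inf_{h \in [m_{s, t}, H_t]} W_t(h)\Bigr),
\]
and the same decomposition holds at the discrete level. Upgrading~\Cref{thm:marzouk} to functional convergence of the full snake trajectories (implicit in its construction) and combining with the continuity of $H$ and $\Wt$ from~\Cref{lem:height holder} then yields convergence of the rescaled discrete minima. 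Assembling the three limits gives the desired inequality.

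The hardest part is this last convergence: the discrete minima are taken over intervals $[m_{s_n, t_n}, H_n(\lfloor sn \rfloor)]$ with stochastically varying endpoints, so one must upgrade~\Cref{thm:marzouk}, stated for the snake tip, to functional convergence of the entire snake trajectories $(W^{(n)}_{\lfloor \cdot n \rfloor})$ to $(W_{\cdot})$. This upgrade is essentially implicit in the setup behind Marzouk's theorem; once granted, the remainder of the argument is routine bookkeeping and mirrors the Brownian case.
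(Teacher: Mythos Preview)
Your approach is essentially the same as the paper's: the paper does not give a self-contained proof but simply points to \cite[Proposition 2.3.8(ii)]{MiermontStFlour2014} for the deterministic discrete cactus bound and says to ``take appropriate limits'', which is exactly what you do. You supply more detail on the limiting step and correctly flag that the minimum over the tree path requires control of the full snake trajectories rather than just the tip; the paper also mentions the alternative route via \cite{legall-scalinglim-geod} (using that tree and snake have no common increase points, established for the stable case in \cite{archer2024snakes}) and the cactus viewpoint of \cite{curien2013brownian}, but your chosen path matches its primary suggestion.
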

This can be proved formally using \cite[Proposition 2.3.8(ii)]{MiermontStFlour2014}, which gives the discrete analogue of this statement (and is deterministic), and taking appropriate limits.
Alternatively, one could also repeat the proof leading up to \cite[Corollary 3.2]{legall-scalinglim-geod}, which gives the analogous result in the Brownian case, and for which the key input is the fact that the tree and the snake do not have common increase points (in the stable case, this latter statement was established in \cite[Theorem 1.4]{archer2024snakes}). There is also an another viewpoint in terms of cactuses: see \cite[Remark 3.3]{curien2013brownian} and the associated discussion.

\section{Large volumes}\label{sctn:uniform points large}
The purpose of this section is to obtain some estimates on the probability of having unusually large volumes at typical points in $(\M, D, \nu, \rho)$. Throughout this section, $U$ will denote a uniform random variable on either $[0, \sigma]$ or $[0,1]$ (under $\N_0$ or $\Noo_0$ as appropriate), chosen independently of everything else, $p(U)$ will denote its image in $\Ta$, and $\pi(U)$ will denote its image in the limiting space $\M$.

The upper bounds on the corresponding tail probabilities are very straightforward; the lower bounds are more involved.

\subsection{Upper tail bounds for large volumes}\label{sctn:UB on uniform balls}

In this subsection we fix an $a > 0$ and consider the law of $M_{\epsilon}(U_a)$, where $M_{\epsilon}(s) := \epsilon^{-\frac{2\alpha}{\alpha-1}} \nu(B(s, \epsilon))$ and where $U_a$ is chosen ``uniformly at level $a$ in the tree". Formally, we sample a pointed rooted tree and snake $(\Ta, U_a, Z)$ according to $\N_0 \times \ell^a$, where the local time measure $\ell^a$ is as in \eqref{eqn:local a def}. Note that this defines a probability measure on pointed rooted trees since $\langle \ell^a, 1\rangle = 1$ by \cref{prop:firstmomentdLs}.

\begin{proposition}\label{prop:upper vol tail bound lazy}
There exists $c < \infty$ such that for all $a \in (0, \infty)$,  $r>0$ and $\lambda >1$,
\[
\N_0 \left( \nu (B(U_a,r)) \geq \lambda r^{\frac{2\alpha}{\alpha-1}} \right) \leq c\lambda^{-\frac{\alpha-1}{2\alpha}}.
\]
\end{proposition}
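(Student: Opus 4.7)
The plan is to combine the snake lower bound on $D$ from \Cref{lem:D snake upper bound} with the spinal decomposition at level $a$ from \Cref{prop:firstmomentdLs} (using the Poisson description of the grafted subtrees analogous to \Cref{prop:firstmomentrange ds}), and then to apply Markov's inequality to a carefully chosen fractional moment. The first step is the deterministic reduction: if $D(U_a, v) \leq r$ then $Z_{U_a} + Z_v - 2\min_{[[U_a, v]]} Z \leq r$, so
\[
\nu(B(U_a, r)) \leq V^* := \Volt\Bigl(\bigl\{v \in \Ta : Z_{U_a} + Z_v - 2 \min_{[[U_a, v]]} Z \leq r \bigr\}\Bigr).
\]
In particular, $v$ contributes only if the tree path from $U_a$ to $v$ never dips below $Z_{U_a} - r$.

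Under $\N_0 \times \ell^a$, the spine from $\rho$ to $U_a$ carries a Brownian motion $(\xi_s)_{0 \leq s \leq a}$, the hubs along it are placed at the jumps of the two-dimensional stable-$(\alpha-1)$ subordinator $(J_a, \J_a)$, and conditional on this data the subtrees grafted at the hubs form a Poisson point process with intensity $(J + \J)(dh) \otimes \N_{\xi_h}(dw)$. Set $\tau := \sup\{s \leq a : \xi_s \leq \xi_a - r\} \vee 0$ and $\mathcal{H} := [\tau, a]$, the set of heights $h$ with $\min_{[h, a]} \xi \geq \xi_a - r$; subtrees grafted outside $\mathcal{H}$ contribute nothing to $V^*$, because the spine itself already violates the required minimum. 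For $h \in \mathcal{H}$ and a subtree $w$ grafted at $h$, shifting labels by $\xi_a - r$ shows that its contribution to $V^*$ is bounded above by $V_w^\circ := \Volt(\{v : \min_{[[\mathrm{root}, v]]} Z \geq 0,\, Z_v \in [0, 2r]\})$, considered under $\N_y$ with $y := \xi_h - \xi_a + r \geq 0$. Applying \Cref{prop:firstmomentdLs} inside this subtree and using the Green's function $G^+(y, z) = 2 \min(y, z)$ of Brownian motion killed at the origin yields the \emph{uniform} bound
\[
\N_y[V_w^\circ] = \int_0^{2r} 2 \min(y, z)\, dz \leq 4 r^2.
\]

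Set $p := (\alpha - 1)/(2\alpha) \in (0, \tfrac{1}{2})$ so that $p \gamma = 1$ with $\gamma = 2\alpha/(\alpha - 1)$. Since $V^* \leq \sum_{h_i \in \mathcal{H}} V_{w_i}^\circ$, the subadditivity $(\sum a_i)^p \leq \sum a_i^p$ (valid for $p \in (0, 1]$) together with the Poisson first-moment formula gives
\[
(\N_0 \times \ell^a)[(V^*)^p] \leq (\N_0 \times \ell^a)\!\left[\int_{\mathcal{H}} (J + \J)(dh)\, \N_{\xi_h - \xi_a + r}[(V_w^\circ)^p]\right].
\]
A H\"older interpolation combining the bound from the previous step with $\N_y[V_w^\circ > 0] \leq \N_y[\min Z \leq 2r]$, controlled via \eqref{eq:rangesnake}, and the scaling \eqref{eqn:Ito scaling}, delivers an estimate of the form $\N_y[(V_w^\circ)^p] \leq C r^{2p} h(y)$ with $h$ integrable at infinity. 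Integrating against the subordinator $(J + \J)$ on $\mathcal{H}$ and using that $a - \tau$ is stochastically dominated by $r^2 T$, where $T$ is the first hitting time of level $1$ by a standard Brownian motion (whose $q$-th moments are finite for $q < \tfrac{1}{2}$), one verifies that with the chosen $p$ all exponents lie in the admissible ranges for these heavy-tailed random variables, and the scaling collapses to $(\N_0 \times \ell^a)[(V^*)^p] \leq C r = C r^{p \gamma}$, uniformly in $a > 0$. Markov's inequality then yields the claim.

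The main obstacle is that $V^*$ has \emph{infinite first moment}: since $\N$ is an infinite measure, each hub is the grafting site of infinitely many small subtrees and their aggregated contribution is not integrable. The proof sidesteps this via the fractional moment of order $p = (\alpha-1)/(2\alpha)$, which is precisely the exponent ensuring that the heavy-tailed stable-$(\alpha-1)$ subordinator $(J, \J)$ and the half-stable Brownian hitting time $a - \tau$ both remain integrable after being raised to the $p$-th power, while the scaling identity $p \gamma = 1$ produces a bound that scales linearly in $r$.
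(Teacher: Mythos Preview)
Your deterministic reduction via \Cref{lem:D snake upper bound} and the first moment bound $\N_y[V_w^\circ] \leq 4r^2$ are correct and match the paper's setup. However, the fractional-moment step via subadditivity has a fatal gap.

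The problem is that $\N_y[(V_w^\circ)^p] = +\infty$ whenever $y \in (0, 2r)$. Indeed, for a subtree whose lifetime $\sigma$ is small enough that its entire snake range stays inside $(0, 2r)$, one has $V_w^\circ = \sigma$; since $N(\sigma \in ds) \asymp s^{-1/\alpha - 1}\,ds$ near $0$ and your exponent $p = (\alpha-1)/(2\alpha)$ satisfies $p < 1/\alpha$ for every $\alpha \in (1,2)$, the integral $\int_{0^+} s^{p - 1/\alpha - 1}\,ds$ diverges. Your proposed H\"older interpolation cannot rescue this either, because $\N_y(V_w^\circ > 0) = +\infty$ for $y \in (0,2r)$: every arbitrarily small excursion already contributes. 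And $y \in (0, 2r)$ is exactly the regime that matters, since the hubs closest to $U_a$ on the spine have $\xi_h$ close to $\xi_a$ and hence $y = \xi_h - \xi_a + r$ close to $r$. So after applying $(\sum a_i)^p \leq \sum a_i^p$ to the Poisson atoms, you are left with a sum of terms each of infinite $\N$-expectation, and the displayed bound on $(\N_0 \times \ell^a)[(V^*)^p]$ is vacuous.

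The paper sidesteps this by using a Laplace transform rather than a fractional moment: one lower bounds $\N_0\big(\int \ell^a(ds)\, e^{-\delta M_\epsilon(s)}\big)$. Because $e^{-\delta \sum_i \sigma_i} = \prod_i e^{-\delta \sigma_i}$, the Poisson product formula (\Cref{prop:Poisson master formula}) applies, and the infinite intensity is absorbed into the \emph{finite} quantity $\N_0(1 - e^{-\lambda \sigma}) = \lambda^{1/\alpha}$. Integrating out the subordinator via $\psi'$ and applying Brownian scaling reduces everything to the Laplace transform of the hitting time $\tau_{-1}$; its Tauberian behaviour $1 - \Pi_0[e^{-\theta \tau_{-1}}] \sim c\theta^{1/2}$ as $\theta\downarrow 0$ produces exactly the exponent $(\alpha-1)/(2\alpha)$, and Markov's inequality finishes. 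The multiplicative structure is essential precisely because the additive (subadditive) structure you use blows up on the small excursions.
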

\begin{remark}
We do not believe that the exponent for the tail decay is optimal.
\end{remark}
\begin{proof}
The proof is similar to that of \cite[Proposition 5.1]{duquesne2005hausdorff}. We apply Proposition \ref{prop:firstmomentdLs} and use the same notation, except that it will be more convenient to consider the linear Brownian motion $(\xi_t)_{t \geq 0}$ as starting at $U_a$ rather than at the root. Let $\tau_x$ denote the hitting time of $x$ by $(\xi_t)_{t \geq 0}$. It follows from Proposition \ref{prop:Poisson master formula}, Proposition \ref{prop:firstmomentdLs} and Lemma \ref{lem:D snake upper bound} that, for any $\delta>0$,
\begin{align*}
\N_0 \left( \int \ell^a (ds) e^{-\delta M_{\epsilon}(s)}\right) &\geq \Pi_0 \left( \exp \left\{ -\alpha \int_0^{\tau_{-\epsilon} \wedge a} \N_0 (1-e^{-\delta \epsilon^{-\frac{2\alpha}{\alpha-1}} \sigma})^{\alpha-1} dt  \right\} \right) \\
&\geq \Pi_0 \left( \exp \left\{ -\alpha \int_0^{\epsilon^{-2} \tau_{-\epsilon}} \epsilon^2 \N_0 (1-e^{-\delta \epsilon^{-\frac{2\alpha}{\alpha-1}} \sigma})^{\alpha-1} dt  \right\} \right) \\
&= \Pi_0 \left( \exp \left\{ -\alpha \int_0^{\tau_{-1}} \N_0 (1-e^{-\delta \sigma})^{\alpha-1} dt  \right\} \right).
\end{align*}
Here in the second and third line we used Brownian scaling and the \Ito scaling property \eqref{eqn:Ito scaling}.

Note that $\tau_{-1}$ has density function  
\[
\frac{1}{\sqrt{4\pi x^3}}e^{-1/4x} dx,
\]
and so a Tauberian theorem implies that there exists $c>0$ such that $1-\Pi_0 \left( \exp \left\{ -\theta \tau_{-1}\right\}\right) \sim c \theta^{1/2}$ as $\theta \downarrow 0$. Hence by \eqref{eqn:Laplace sigma} we can write for all sufficiently small $\delta>0$:
\begin{align*}
\Pi_0 \left( \exp \left\{ -\alpha \int_0^{\tau_{-1}} \N_0 (1-e^{-\delta \sigma})^{\alpha-1} dt  \right\} \right) &= \Pi_0 \left( \exp \left\{ -\alpha  \delta^{\frac{\alpha-1}{\alpha}} \cdot {\tau_{-1}}   \right\} \right) \geq 1 - c\delta^{\frac{\alpha-1}{2\alpha}}.
\end{align*}

Recall from below \cref{prop:firstmomentdLs} that $\N_0 \left( \int \ell^a (ds)\right)=1$. Hence applying Markov's inequality we deduce that 
\begin{align*}
\N_0 \left( \int \ell^a (ds) \mathbbm{1}\{ M_{\epsilon}(s) \geq \lambda\} \right) &\leq (1-e^{-1})^{-1}\N_0 \left( \int \ell^a (ds) (1 - e^{-\lambda^{-1} M_{\epsilon}(s)})\right) \\
& \leq c(1-e^{-1})^{-1}\lambda^{-\frac{\alpha-1}{2\alpha}} . \qedhere
\end{align*}
\end{proof}

This allows us to deduce the following. 

\begin{corollary}\label{cor:log volume fluctuations UB}
Take any $\epsilon>0$. Then $\N_0$-almost surely, it holds for $\nu$-almost every $x$ in $\M$ that
\[
\limsup_{r \downarrow 0} \frac{\nu (B(x,r))}{r^{\frac{2\alpha}{\alpha-1}}(\log r^{-1})^{\frac{2\alpha+\epsilon}{\alpha-1}}} = 0.
\]
\end{corollary}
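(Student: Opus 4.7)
The plan is a standard Borel--Cantelli argument along a geometric sequence of radii, combined with an integration over levels $a$ to pass from the local-time measures to $\nu$. Fix $\epsilon>0$ and choose $\epsilon' \in (0, \epsilon)$ (for instance $\epsilon' = \epsilon/2$). Set $r_n = 2^{-n}$ and
\[
\lambda_n = (\log r_n^{-1})^{\frac{2\alpha+\epsilon'}{\alpha-1}}.
\]
Applying Proposition~\ref{prop:upper vol tail bound lazy} with $r = r_n$ and $\lambda = \lambda_n$, and unpacking the definition of the sampling of $U_a$, one obtains for every $a>0$
\[
\N_0 \left( \int \ell^a(ds)\, \mathbbm{1}\!\left\{\nu(B(\pi(s), r_n)) \geq r_n^{\frac{2\alpha}{\alpha-1}}(\log r_n^{-1})^{\frac{2\alpha+\epsilon'}{\alpha-1}}\right\}\right) \leq c\, \lambda_n^{-\frac{\alpha-1}{2\alpha}} = c (\log r_n^{-1})^{-\frac{2\alpha+\epsilon'}{2\alpha}}.
\]
Since $\log r_n^{-1} = n \log 2$ and $\frac{2\alpha+\epsilon'}{2\alpha} = 1 + \frac{\epsilon'}{2\alpha} > 1$, the right-hand side is summable in $n$.

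The next step is to apply Borel--Cantelli in the form of a monotone convergence / Fubini argument under the (sigma-finite) measure $\N_0 \otimes \ell^a$ on realizations and points $s$. This yields that for $\N_0$-almost every realization and $\ell^a$-almost every $s$, the event inside the indicator occurs for only finitely many $n$. To pass from ``$\ell^a$-a.e.\ $s$ for a.e.\ $a$'' to ``$\nu$-a.e.\ $x \in \M$'', I would use the decomposition $ds = \int_0^\infty \ell^a(ds)\, da$ from \eqref{eqn:local a def} together with the fact that $\nu$ is the pushforward under $\pi$ of Lebesgue measure on $[0,\sigma]$. Concretely, integrating the preceding conclusion in $a$ using Fubini, we get that for $\N_0$-a.e.\ realization, for $\nu$-a.e.\ $x \in \M$, only finitely many $n$ satisfy $\nu(B(x,r_n)) \geq r_n^{\frac{2\alpha}{\alpha-1}}(\log r_n^{-1})^{\frac{2\alpha+\epsilon'}{\alpha-1}}$. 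In particular,
\[
\limsup_{n \to \infty} \frac{\nu(B(x, r_n))}{r_n^{\frac{2\alpha}{\alpha-1}}(\log r_n^{-1})^{\frac{2\alpha+\epsilon'}{\alpha-1}}} \leq 1,
\]
and since $\epsilon' < \epsilon$, multiplying by the extra logarithmic factor $(\log r_n^{-1})^{-(\epsilon - \epsilon')/(\alpha-1)} \to 0$ gives the limsup along $r_n$ with exponent $\epsilon$ equal to $0$.

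Finally, I would interpolate from the discrete sequence to continuous radii. For $r \in [r_{n+1}, r_n]$, monotonicity of $r \mapsto \nu(B(x,r))$ gives $\nu(B(x,r)) \leq \nu(B(x, r_n))$, while $r_n \leq 2 r$ and $\log r_n^{-1} \leq \log r^{-1} + \log 2$, so the quotient at $r$ is bounded by a constant times the quotient at $r_n$. Letting $r \downarrow 0$ completes the argument.

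I do not expect a serious obstacle in this proof: the tail bound of Proposition~\ref{prop:upper vol tail bound lazy} is exactly strong enough to make the sum $\sum_n \lambda_n^{-(\alpha-1)/(2\alpha)}$ converge for any choice $\epsilon' > 0$, which is the crux. The only mildly delicate points are the exponent bookkeeping and the passage from $\ell^a$-a.e.\ statements (which hold for every $a>0$) to a $\nu$-a.e.\ statement via the occupation-density identity; both are standard once the tail bound is in hand.
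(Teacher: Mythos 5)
Your argument is correct and follows essentially the same route as the paper's proof: reduce via Fubini to a point $U_a$ sampled from $\ell^a$, apply the tail bound of Proposition~\ref{prop:upper vol tail bound lazy} with Borel--Cantelli along $r_n=2^{-n}$, use monotonicity of volumes to interpolate to all radii, and integrate over levels $a$ via \eqref{eqn:local a def} to pass to $\nu$-almost every $x$. Your explicit bookkeeping with $\epsilon'<\epsilon$ just makes precise the step the paper leaves implicit when upgrading the bounded limsup to a zero limsup.
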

\begin{proof}
We start by fixing an arbitrary $a>0$ and proving the result for $\ell^a$-almost every $x$; this then extends straightforwardly to $\nu$-almost every $x$ using \eqref{eqn:local a def}. By Fubini's theorem, it is sufficient to prove the claim for $B(U_a,r)$, where $U_a$ is chosen according to $\ell^a$. This latter statement follows quite straightforwardly from \cref{prop:upper vol tail bound lazy}. In particular, combining the tail estimate of \cref{prop:upper vol tail bound lazy} with Borel-Cantelli along the subsequence $r_k = 2^{-k}$ and using monotonicity of volumes, we deduce that, $\N_0$-almost everywhere
\begin{equation*}
\limsup_{r \downarrow 0} \frac{\nu (B(x,r))}{r^{\frac{2\alpha}{\alpha-1}}(\log r^{-1})^{\frac{2\alpha+\epsilon}{\alpha-1}}} \leq 2^{\frac{2\alpha}{\alpha-1}} \limsup_{k \to \infty} \frac{\nu (B(x,r_k))}{r_k^{\frac{2\alpha}{\alpha-1}}(\log r_k^{-1})^{\frac{2\alpha+\epsilon}{\alpha-1}}} = 0, \text{  for } \ell^a\text{-almost every } x.
\end{equation*}
 Moreover by considering the restriction to $\{1 < \sigma < 2\}$ and rescaling, we deduce the same result under $\Noo_0$.
\end{proof}

\subsection{Lower tail bounds for large volumes}

We now turn to establishing a similar lower bound. We anticipate that the exponent $\alpha-1$ appearing here is sharp.

\begin{proposition}\label{prop:lower vol tail bound}
\begin{enumerate}[(a)]
\item There exists $c > 0$ such that for all $r>0, 1< \lambda < r^{-\frac{\alpha}{\alpha-1}}$,
\begin{align*}
\Noo_0 \left( \nu (B(\pi(U),r)) \geq \lambda r^{\frac{2\alpha}{\alpha-1}} \right) &\geq c\lambda^{-(\alpha-1)}.
\end{align*}
\item There exists $c > 0$ such that for all $r>0, 1< \lambda < \infty$,
\begin{align*}
\N_0 \left( \nu (B(\pi(U),r)) \in [\lambda r^{\frac{2\alpha}{\alpha-1}}, 2\lambda r^{\frac{2\alpha}{\alpha-1}}] \right) &\geq c\lambda^{-(\alpha-1)}r^{-\frac{2}{\alpha-1}}.
\end{align*}
\end{enumerate}
\end{proposition}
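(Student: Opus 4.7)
My plan is to reduce both parts to a single tail estimate on the first exit time of the snake label from a slab, and then to prove that estimate by exhibiting a large hub on a spine of the stable tree.

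For the reduction, I use $D\le\Da^{\circ}$ from item~(2) of \cref{prop:properties of D}. When $s>U$, the lex interval $[U,s]$ gives $\Da^{\circ}(U,s)\le Z_U+Z_s-2\inf_{[U,s]}Z$, so $|Z_u-Z_U|\le r/3$ for every $u\in[U,s]$ is a sufficient condition for $\Da^{\circ}(U,s)\le r$. Writing $T_r^+(U)=\inf\{s>U:|\Wt_s-\Wt_U|>r/3\}-U$ for the length of the maximal interval to the right of $U$ on which this condition holds, we obtain $\nu(B(\pi(U),r))\ge T_r^+(U)$. Since $\nu$ and $D$ on $\M$ are invariant under re-rooting, uniform re-rooting invariance (\cref{prop:uniform rerooting}) reduces part~(a) to the lower bound
\[
\Noo_0\bigl(T_r\ge\lambda r^{\frac{2\alpha}{\alpha-1}}\bigr)\ge c\,\lambda^{-(\alpha-1)},
\]
where $T_r:=\inf\{s\in[0,1]:|\Wt_s|>r/3\}$ is the exit time from the slab for the snake started at the root. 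A parallel reduction under $\N_0$ gives part~(b); the extra factor $r^{-2/(\alpha-1)}$ on its right-hand side is produced by the snake scaling $\N_0\circ\Phi_{\mu}^{-1}=\mu^{1/\alpha}\N_0$ of~\eqref{eqn:Ito scaling} applied at $\mu=r^{-2\alpha/(\alpha-1)}$. The upper bound $\nu(B)\le 2\lambda r^{\frac{2\alpha}{\alpha-1}}$ needed for (b) is obtained by combining \cref{lem:D snake upper bound} (i.e.\ $D$ dominates the cactus distance) with a standard Poisson-concentration estimate ruling out two big hubs on the same spine.

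For the key snake estimate I would exploit the big-hub structure of the stable tree. By the spinal decomposition of \cref{prop:firstmomentrange ds} along the branch from the root to a typical point, the hubs encountered on the spine correspond to the jumps of the subordinator $J+\J$, whose jump measure is $\pi'(dr)=\alpha C_{\alpha}r^{-\alpha}dr$. Tuning the hub scale to $\Delta\asymp\lambda r^{2/(\alpha-1)}$, the probability that the spine carries at least one such hub at a height where the spinal Brownian motion $\xi$ lies within $O(r)$ of the label of the starting point is of order $\lambda^{-(\alpha-1)}$ under $\Noo_0$ (with an extra factor $r^{-2/(\alpha-1)}$ under $\N_0$). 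Conditionally on the presence of such a hub at height $a$, the subtrees grafted on it form a Poisson process of intensity $\Delta\,\N_{\xi_a}(dw)$; by \cref{prop:Poisson master formula}, the scaling~\eqref{eqn:Ito scaling} and the exit-measure formula~\eqref{eq:rangesnake}, a positive fraction of these subtrees stay inside the slab $[\xi_a-r/3,\xi_a+r/3]$ and contribute, with constant conditional probability, a total Lebesgue mass of order $\lambda r^{\frac{2\alpha}{\alpha-1}}$.

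The main obstacle will be this quantitative balancing: one needs to show that the conditional probability that the Poisson subtrees contribute a mass of the target order (and, for~(b), no more than twice it) is bounded below by a positive constant uniformly in $r$ and $\lambda$. Both the lower and upper mass bounds rely on Poisson concentration together with the snake exit estimates of \cref{section:stablesnake}, while the localisation of $\xi$ at the hub's height is a small-ball event of constant probability handled via \cref{lem:height holder}. The exponent $\alpha-1$ is dictated entirely by the jump-intensity tail of $J+\J$ and is therefore expected to be sharp.
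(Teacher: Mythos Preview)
Your overall mechanism—find a big hub of size $\asymp\lambda r^{2/(\alpha-1)}$ on the spine near $U$ and harvest the mass of the subtrees grafted there—is exactly the paper's strategy. The gap is in your reduction step: the lower bound $\nu(B(\pi(U),r))\ge T_r^+(U)$ via the first exit time of $\Wt$ from the slab is correct but far too weak to carry the big-hub argument, and the two halves of your proposal are in fact incompatible.

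Here is why. If the hub $h$ has size $\Delta\asymp\lambda r^{2/(\alpha-1)}$, then by \eqref{eq:rangesnake} the subtrees grafted at $h$ in which the snake leaves $[\xi_h-r/3,\xi_h+r/3]$ form a Poisson process of intensity $\asymp\Delta\cdot r^{-2/(\alpha-1)}\asymp\lambda$. Thus there are typically $\asymp\lambda$ such ``bad'' subtrees interspersed among the good ones, and the contour exploration hits the first bad subtree after only a $\asymp\lambda^{-1}$ fraction of the hub mass. Consequently $T_r^+(U)$ captures at most order $r^{2\alpha/(\alpha-1)}$ of mass, not $\lambda r^{2\alpha/(\alpha-1)}$; the event $\{T_r\ge\lambda r^{2\alpha/(\alpha-1)}\}$ has probability decaying much faster than $\lambda^{-(\alpha-1)}$. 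You yourself write that only ``a positive fraction of these subtrees stay inside the slab''—but a positive fraction is not the same as the initial contiguous block that $T_r$ sees.

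What the paper does instead (see \cref{lem:Vr distance bound 2}) is to show directly that \emph{every} good subtree at $h$ lies in $B_{\M}(\pi(U),r)$, regardless of how many bad subtrees precede it in contour order. The point is that each maximal run of consecutive good subtrees is coded by an interval $[s,t]$ with $p(s)=p(t)=h$, so one can apply $D\le\Da^\circ$ to that interval to get $D(h,x)\le r/2$ for every $x$ in a good subtree; combined with $D(\pi(U),h)\le r/2$ (which follows from the event $B_r$ controlling the spine and the small hubs between $U$ and $h$), the triangle inequality gives the inclusion. This is the step your reduction discards. A secondary issue is that re-rooting invariance of $D$ itself is not established in the paper (only the snake is known to be re-rooting invariant, and $D$ is merely a subsequential limit), so the paper works with the spinal decomposition at $U$ directly via \cref{prop:firstmomentrange ds} rather than moving $U$ to the root.
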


\begin{remark}
The bound in part (a) should really hold for all $1< \lambda \ll r^{-\frac{2\alpha}{\alpha-1}}$. The reason for the considering the interval $[\lambda r^{\frac{2\alpha}{\alpha-1}}, 2\lambda r^{\frac{2\alpha}{\alpha-1}}]$ in part (b) will become apparent in the proof of Corollary \ref{cor:log volume fluctuations sup LB}, where it will enable us to consider disjoint events for a decreasing sequence of radii.
\end{remark}

Comparing with the following proposition for the volume of the ball around the minimum of the snake (recall this is denoted $\pi(s^*)$) leads to a proof of Corollary \ref{cor:minimum atypical}. This contrasts with the Brownian case, where $\pi (s^*)$ has the same law as a vertex chosen according to $\nu$.

\begin{proposition}\label{prop:expectation minimum}
It holds that
\begin{equation}
\limsup_{\varepsilon \downarrow 0} \varepsilon^{-\frac{2 \alpha}{\alpha -1}} \Noo_0 \left( \nu (B(\pi(s^*), \epsilon)) \right) < \infty.
\end{equation}
\end{proposition}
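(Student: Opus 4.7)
The plan is to identify the expected volume around $\pi(s^*)$ with a small-ball probability for $-\Wt_{s^*}$ under $\Noo_0$ via uniform re-rooting, and then to control this probability using the snake scaling together with the tail estimate \eqref{eq:rangesnake}.

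First, by Proposition~\ref{prop:properties of D}(3),
\[
\nu(B(\pi(s^*), \epsilon)) = \overline{\I}[0, \epsilon] = \int_0^1 \mathbbm{1}\{\Wt_s - \Wt_{s^*} \leq \epsilon\}\, ds.
\]
For each $s \in [0, 1]$, the re-rooted snake tip $\Wts$ defined via \eqref{eqn:rerooted height and snake tip} satisfies $\min_r \Wts_r = \Wt_{s^*} - \Wt_s$, so $\{\Wt_s - \Wt_{s^*} \leq \epsilon\} = \{\min \Wts \geq -\epsilon\}$. Applying Proposition~\ref{prop:uniform rerooting} with the functional $F(s, W) = \mathbbm{1}\{\min \Wt \geq -\epsilon,\ \sigma \in [1, 1+\delta]\}$ (this quantity is preserved by re-rooting since $\sigma$ is), then dividing both sides by $\nu([1, 1+\delta])$ and letting $\delta \to 0$, one obtains
\[
\Noo_0\bigl(\nu(B(\pi(s^*), \epsilon))\bigr) = \Noo_0\bigl(\overline{\I}[0, \epsilon]\bigr) = \Noo_0(-\Wt_{s^*} \leq \epsilon).
\]

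It thus suffices to prove the small-ball bound $\Noo_0(-\Wt_{s^*} \leq \epsilon) \leq C\epsilon^{2\alpha/(\alpha-1)}$. With $\beta = (\alpha-1)/(2\alpha)$, the scaling \eqref{eqn:Ito scaling} gives $\Noo^{(s)}_0(-\Wt_{s^*} \leq 1) = \Noo_0(-\Wt_{s^*} \leq s^{-\beta})$, reducing the claim (via $s = \epsilon^{-2\alpha/(\alpha-1)}$) to $\Noo^{(s)}_0(-\Wt_{s^*} \leq 1) = O(1/s)$ as $s \to \infty$. To establish this I would compute $\N_0(\sigma\,\mathbbm{1}\{\min \Wt \geq -1\})$ via the spinal decomposition of Proposition~\ref{prop:firstmomentrange ds} applied to $\phi = \mathbbm{1}\{\min \mathcal R \geq -1\}$: the Poisson master formula (Proposition~\ref{prop:Poisson master formula}) combined with the translated tail $\N_{\xi_a}(\min w < -1) = c_0\,(1+\xi_a)^{-2/(\alpha-1)}$ from \eqref{eq:rangesnake} produces an integral over $h$ containing the exponential suppression factor
\[
\exp\!\left(-c_0 \int_0^h (J_h + \hat J_h)(da)\,(\xi_a + 1)^{-2/(\alpha-1)}\right).
\]
Then, using the Ito disintegration $\N_0 = \int \Noo^{(s)}_0\,\nu(ds)$ together with the scaling identity $\Noo^{(s)}_0(\min \Wt \geq -1) = \Noo_0(-\Wt_{s^*} \leq s^{-\beta})$, the density in $s$ can be extracted to yield the pointwise rate $O(1/s)$.

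The main obstacle is the final step. The snake scaling alone (applied to $\N_0(\overline{\I}[0, \epsilon]) = O(\epsilon^2)$) only constrains integrals of $G(\epsilon) := \Noo_0(-\Wt_{s^*} \leq \epsilon)$ against power-law weights, which does not pin down the pointwise exponent $\gamma = 2\alpha/(\alpha-1)$. Extracting this sharp exponent genuinely requires the exponential suppression from the spinal decomposition: the interaction of the subordinator $U^{(1)}+U^{(2)}$ (Laplace exponent $\alpha\lambda^{\alpha-1}$) with the Brownian branch near its minimum drives the suppression, and a careful analysis (using Brownian bridge estimates conditional on the minimum being controlled) is needed to show the integrability of the resulting expression and convert the $\N_0$-level estimate into the pointwise $\Noo_0$-bound.
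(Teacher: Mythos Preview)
Your first two reductions are correct and clean: the identification $\nu(B(\pi(s^*),\epsilon))=\overline{\I}[0,\epsilon]$ is exactly Proposition~\ref{prop:properties of D}(3), and the re-rooting identity $\Noo_0(\overline{\I}[0,\epsilon])=\Noo_0(-\Wt_{s^*}\le\epsilon)$ is a valid and elegant reformulation. The paper, however, does not attempt to prove the small-ball estimate at all; it simply invokes \cite[Theorem~1.3]{archer2024snakes}, which already provides the bound $\Noo_0(\overline{\I}[0,\epsilon])=O(\epsilon^{2\alpha/(\alpha-1)})$ (after the standard restriction to $\{1<\sigma<2\}$ and rescaling). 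So from the paper's point of view, the entire proposition is a one-line citation.

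The gap in your proposal is precisely where you locate it yourself. Knowing that $\N_0(\sigma\,\mathbbm{1}\{\min\Wt\ge -1\})<\infty$ yields, after the It\^o disintegration and the change of variable you indicate, only that $\int_0^\infty G(u)\,u^{-3}\,du<\infty$, hence $G(\epsilon)=o(\epsilon^2)$ near~$0$; this is strictly weaker than the required exponent $\tfrac{2\alpha}{\alpha-1}>2$. Passing from an integrated bound to the pointwise rate $\Noo^{(s)}_0(-\Wt_{s^*}\le 1)=O(1/s)$ needs either a monotonicity/regularity argument for $s\mapsto s\,\Noo^{(s)}_0(\min\Wt\ge -1)$ or a genuinely sharper input. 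The spinal computation you sketch---coupling the subordinator $U^{(1)}+U^{(2)}$ with the Brownian branch conditioned to stay above $-1$---is indeed the natural route, and is essentially how \cite{archer2024snakes} proceeds, but carrying it out is a substantial piece of analysis in its own right (roughly the content of that cited theorem), not something that falls out of \eqref{eq:rangesnake} and Proposition~\ref{prop:Poisson master formula} alone. In short: your outline is on the right track for an ab initio proof, but as written it is incomplete, and the paper bypasses the difficulty entirely by citation.
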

\begin{proof}
This is a straightforward consequence of \cite[Theorem 1.3]{archer2024snakes} and~\cref{prop:properties of D} (3) (to obtain the result under $\Noo_0$ we can restrict to the event $\{1<\sigma<2\}$ and apply rescaling).
\end{proof}

\begin{proof}[Proof of Corollary \ref{cor:minimum atypical}]
If $\pi(s^*)$ did have the same law as $\pi(U)$ for an independently chosen $U$, then Markov's inequality combined with Proposition \ref{prop:expectation minimum} would imply that there exists a constant $C<\infty$ such that 
for all $r>0, 1< \lambda < r^{-\frac{\alpha}{\alpha-1}}$,
\[
\Noo_0 \left( \nu (B(\pi(U),r)) \geq \lambda r^{\frac{2\alpha}{\alpha-1}} \right) \leq C\lambda^{-1}.
\]
Taking for example $\lambda = \log r^{-1}$ we find that this result is incompatible with \cref{prop:lower vol tail bound}(a) for all sufficiently small $r$.
\end{proof}

Similarly to the previous subsection, this has consequences for volume fluctuations around typical points in $\M$.

\begin{corollary}\label{cor:log volume fluctuations sup LB}
Take any $\epsilon>0$. Then $\N_0$-almost everywhere, it holds for $\nu$-almost every $x$ in $\M$ that
\[
\limsup_{r \downarrow 0} \frac{\nu (B(x,r))}{r^{\frac{2\alpha}{\alpha-1}}(\log r^{-1})^{\frac{1-\epsilon}{\alpha-1}}} = \infty.
\]
\end{corollary}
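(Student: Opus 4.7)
The plan is a second Borel--Cantelli argument along a dyadic scale $r_k := 2^{-k}$, using \cref{prop:firstmomentrange ds} to extract approximate independence from the marginal bound of \cref{prop:lower vol tail bound}(a).

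\textbf{Step 1 (Fubini and marginal bound).} By Fubini, it suffices to sample $U$ uniformly on $[0,1]$ under $\Noo_0$, independent of the snake, and prove that for every integer $M \geq 1$ the event
\[
A_k(M) := \Big\{ \nu(B(\pi(U), r_k)) \geq M \, r_k^{\frac{2\alpha}{\alpha-1}} (\log r_k^{-1})^{\frac{1-\epsilon}{\alpha-1}} \Big\}
\]
occurs infinitely often, $\Noo_0 \otimes \mathrm{Leb}$-almost surely; one may then intersect over $M \in \N$ and transfer from $\Noo_0$ to $\N_0$ via restriction to $\{\sigma \in [1,2]\}$ and the scaling \eqref{eqn:Ito scaling}. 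Setting $\lambda_k := M(\log r_k^{-1})^{(1-\epsilon)/(\alpha-1)}$, one has $1 < \lambda_k < r_k^{-\alpha/(\alpha-1)}$ for all $k$ large enough, so \cref{prop:lower vol tail bound}(a) yields
\[
\Noo_0 \otimes \mathrm{Leb}\big(A_k(M)\big) \geq c\lambda_k^{-(\alpha-1)} \asymp M^{-(\alpha-1)} k^{-(1-\epsilon)},
\]
which is not summable in $k$ since $\epsilon > 0$.

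\textbf{Step 2 (Approximate independence across scales).} The events $A_k(M)$ are not independent because of the monotonicity $B(\pi(U), r_{k+1}) \subset B(\pi(U), r_k)$. To bypass this, I would revisit the proof of \cref{prop:lower vol tail bound}(a): that proof extracts its lower bound from the Poisson process of snake-subtrees grafted along the ancestral spine from $\rho$ to $p(U)$ described by \cref{prop:firstmomentrange ds}. Parametrising the spine by $a \in [0,h]$ with $h := d_\alpha(\rho, p(U))$, the spinal snake $(Z_a)_{a \in [0,h]}$ is a Brownian motion, so on a window of length $\asymp r_k^2$ near $p(U)$ the spinal snake-values stay within an interval of width $\asymp r_k$ around $Z_U$. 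Choosing disjoint windows $W_k := [h - r_k^2, h - r_k^2/2] \subset [0,h]$ and restricting the Poisson extraction in the proof of \cref{prop:lower vol tail bound}(a) to hubs in $W_k$, I would produce random variables
\[
X_k \leq \nu(B(\pi(U), r_k))
\]
that are conditionally independent across $k$ given the spinal Brownian motion, and that still satisfy $\Noo_0(X_k \geq \lambda_k r_k^{2\alpha/(\alpha-1)}) \gtrsim M^{-(\alpha-1)} k^{-(1-\epsilon)}$.

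\textbf{Step 3 (Conclusion and main obstacle).} A conditional second Borel--Cantelli lemma applied to the $X_k$'s, on the $\Noo_0$-almost sure event $\{h>0\}$, would then yield that infinitely many $X_k$ cross the threshold, hence so do infinitely many $A_k(M)$; intersecting over $M \in \N$ closes the argument. The main technical difficulty is Step 2: the spinal windows $W_k$ and the restricted Poisson argument must be tuned so as to preserve both the marginal probability of order $k^{-(1-\epsilon)}$ \emph{and} the domination $X_k \leq \nu(B(\pi(U), r_k))$, where the latter relies on the upper bound $D \leq D^\circ$ to convert a condition on the snake range of each contributing subtree into an actual ball-membership statement.
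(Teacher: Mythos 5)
Your outline is in the same spirit as the paper's argument (Fubini, spinal decomposition via \cref{prop:firstmomentrange ds}, disjoint windows along the spine, second Borel--Cantelli), but the step you flag as the ``main technical difficulty'' is precisely where all the work lies, and as written it is a genuine gap. For your $X_k$ (the volume of subtrees grafted in the window $W_k$ whose snakes stay within order $r_k$ of $Z_U$) to satisfy $X_k \leq \nu(B(\pi(U),r_k))$, you must also control the snake on the portion of the spine between $p(U)$ and $W_k$ \emph{and} in all subtrees grafted to the relevant side of that portion: this is what converts a snake-range condition into actual ball membership via $D \leq \Da^*$. Those controlling events involve the windows $W_j$ with $j>k$ and the gaps between them, so they are not functions of disjoint pieces of the spinal Poisson process; hence the variables you want to be ``conditionally independent across $k$ given the spinal Brownian motion'' cease to be independent once they are required to imply ball membership. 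Worse, the failure probability of the controlling event at a given scale is bounded below by a constant (it is governed by \eqref{eq:rangesnake} and does not tend to $0$ as $k\to\infty$), so it cannot be absorbed into a Borel--Cantelli estimate. Also note that your deterministic windows make the spinal label control itself an extra event at every scale, and that the marginal bound you quote from \cref{prop:lower vol tail bound}(a) is not the form actually needed for a Poissonian count of good subtrees in a short window; the interval/intensity form in part (b) is.

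The paper closes exactly this gap with three devices, some version of which you would need. First, it re-roots at $\pi(U)$ using \cref{prop:uniform rerooting} and takes the windows to be $[\tau_{n+1},\tau_n]$, where $\tau_n$ is the exit time of the spinal Brownian motion from an interval of width proportional to $r_n$, so that the label control on the spine itself holds by construction rather than as an additional event. Second, it splits the grafted subtrees according to the two sides of the spine: the ``large local ball'' events $A_n^r$ are built only from the right-hand Poisson processes over the disjoint intervals $[\tau_{n+1},\tau_n]$, with the Poisson intensity lower bound coming from \cref{prop:lower vol tail bound}(b); these events are genuinely independent across $n$, so the second Borel--Cantelli lemma applies to them alone. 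Third, the control of the left-hand subtrees along $[0,\tau_n]$ --- which is \emph{not} independent across scales --- is handled separately: each such event fails with probability at most a constant times $\delta^{\frac{2}{\alpha-1}}$ (where $\delta$ is the width parameter of the windows), a Fatou argument along the random subsequence where the right-hand events occur shows the desired conclusion holds with probability at least $1-c\delta^{\frac{2}{\alpha-1}}$, and one then lets $\delta \downarrow 0$. Without an analogue of the second and third devices your Step 2 does not go through; with them, your plan essentially becomes the paper's proof.
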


We now turn to the proof of Proposition \ref{prop:lower vol tail bound}, which will proceed by a series of lemmas. The strategy is to consider an event on which $U$ is close to a large hub, which in turn has a correspondingly large volume.

Recall that $U$ is a uniform point in $[0, \sigma]$; we let $H_U$ denote the height of $p(U)$ in $\Ta$. We consider the spinal decomposition along the spine from $\rho$ to $p(U)$ as in Section \ref{sctn:spinal decomp} and recall that the sizes of hubs along this spine are determined by two subordinators $U^{(1)}$ and $U^{(2)}$: one for each side of the path. We let $I_r$ denote the part of this spine that is within tree distance $r^2$ of $p(U)$, and consider the sizes of the parts of the hubs that are on the right hand side of this part of the spine (i.e. the side that comes after $p(U)$ in the contour exploration). Wlog we suppose that $U^{(1)}$ corresponds to the right hand side; each jump of $U^{(1)}$ then corresponds to a ``half-hub'' of the same size on the spine. We consider the event $A_r = A_{r, \lambda}$ (we drop the $\lambda$ to lighten notation) that in $I_r$ there exists a half-hub of size exceeding $\lambda r^{\frac{2}{\alpha-1}}$, and that the sum of the sizes of all of the rest of the half-hubs in this interval is at most $r^{\frac{2}{\alpha-1}}$.

\begin{figure}[h]
\begin{center}
\includegraphics[height=1.5cm]{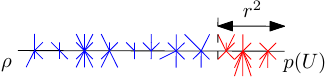}
\caption{Schematic representation of the sizes of hubs (and half-hubs) on each side of the spine from $\rho$ to $p(U)$. There are subtrees attached according to the Poisson process described in \cref{sctn:spinal decomp}. The red and blue parts can be treated independently.}\label{fig:red blue spine}
\end{center}
\end{figure}

We also define $\tilde{\sigma}_r$ to be the sum of the lifetimes of all the subtrees grafted to the spine outside of the interval $I_r$ (i.e. the sum of the volumes of all of the subtrees in blue in Figure \ref{fig:red blue spine}). The quantity $\tilde{\sigma}_r$ will be useful because it is a good approximation for $\sigma$, but it can be treated independently of the red subtrees in calculations.

\begin{lemma}\label{lem:Ar prob LB}
There exists $c > 0$ such that for all $r>0, 1< \lambda \ll r^{-\frac{2}{\alpha-1}}$,
\[
\N_0 (A_r, 1 < \tilde{\sigma}_r < 2) \geq c\lambda^{-(\alpha-1)}.
\]
\end{lemma}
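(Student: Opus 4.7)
The plan is to apply the spinal decomposition in Proposition~\ref{prop:firstmomentrange ds} with the test function $\phi(s) = \mathbbm{1}_{A_r(s)}\,\mathbbm{1}_{\{\tilde{\sigma}_r(s) \in (1,2)\}}$, interpreting the left-hand side of the lemma as $\N_0\bigl[\int_0^\sigma \phi(s)\,ds\bigr]$. Under this representation, for each fixed height $h = H_U$ of $p(U)$, the half-hubs on the right of the spine are encoded by the jumps of the subordinator $J = U^{(1)}$, and the subtrees grafted along the spine form a Poisson process $\mathcal{P}$ with intensity $(J + \hat J)(da) \otimes \N_{\xi_a}(dw)$. Restricting the $h$-integration to $h \geq r^2$, the event $A_r$ is a functional of $J$ restricted to the top slice $[h-r^2, h]$, while $\tilde\sigma_r$ is a functional of the subtrees of $\mathcal{P}$ attached at heights in $[0, h-r^2]$. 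By independence of increments of $(U^{(1)}, U^{(2)})$ across disjoint intervals combined with the independent scattering of $\mathcal{P}$ on disjoint height-strips, these two functionals are independent, so the integrand factorises as $\mathbb{P}(A_r) \cdot \mathbb{P}(\tilde\sigma_r \in (1,2)\mid h)$.

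For the $\mathbb{P}(A_r)$ factor, stationary increments give independence of $h$, and self-similarity $U^{(1)}_{r^2 t} \overset{d}{=} r^{2/(\alpha-1)}\, U^{(1)}_t$ reduces $A_r$ to the scale-invariant event, on a standard $(\alpha-1)$-stable subordinator over $[0,1]$, that some jump exceeds $\lambda$ while all others sum to at most $1$. Decomposing $U^{(1)} = U^{(1,<\lambda)} + U^{(1,\geq\lambda)}$ into independent Poisson pieces, I would bound $\mathbb{P}(A_r)$ below by
\[
\mathbb{P}\!\left(U^{(1,\geq\lambda)}_1 \text{ has exactly one jump}\right) \cdot \mathbb{P}\!\left(U^{(1,<\lambda)}_1 \leq 1\right).
\]
The first factor equals $\mu e^{-\mu}$ with $\mu = \tilde\pi([\lambda, \infty)) = \tfrac{C_\alpha}{\alpha-1}\lambda^{-(\alpha-1)}$; since $\mu$ is bounded for $\lambda > 1$, this is at least $c\,\lambda^{-(\alpha-1)}$. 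The second factor is bounded below by $\mathbb{P}(U^{(1)}_1 \leq 1) > 0$: indeed, truncating jumps only decreases the subordinator pointwise, so $U^{(1,<\lambda)}_1 \leq U^{(1)}_1$, and $U^{(1)}_1$ admits a continuous density on $(0, \infty)$. This yields $\mathbb{P}(A_r) \geq c\,\lambda^{-(\alpha-1)}$ uniformly in $r$.

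For the remaining integral, the change of variable $h' = h - r^2$ gives
\[
\int_{r^2}^{\infty} \mathbb{P}(\tilde\sigma_r \in (1,2) \mid h)\, dh = \int_0^\infty \mathbb{P}(\sigma(h') \in (1,2))\, dh',
\]
where $\sigma(h')$ denotes the total subtree mass produced by the Poisson-process construction along a spine of length $h'$. Re-applying Proposition~\ref{prop:firstmomentrange ds} in reverse with the constant functional $\mathbbm{1}_{\{\sigma \in (1,2)\}}$ identifies this integral with $\N_0\bigl[\sigma\,\mathbbm{1}_{\{\sigma \in (1,2)\}}\bigr]$, which is bounded below by $\N_0(\sigma \in (1,2)) = c(1 - 2^{-1/\alpha}) > 0$ using the tail formula~\eqref{eqn:N lifetime tails}. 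Multiplying the two factors yields the claimed lower bound. The main obstacle I anticipate is the careful independence bookkeeping in the first step, since it hinges on correctly identifying what in the spinal decomposition is determined by the top slice $[h-r^2,h]$ of the subordinator and what is determined by the bottom part together with the subtrees below $I_r$; once this factorisation is secured, the remaining ingredients are standard Poisson-process and scaling computations.
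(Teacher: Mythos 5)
Your overall route is close in spirit to the paper's: the factorisation of the spinal decomposition of Proposition \ref{prop:firstmomentrange ds} into the top slice $I_r$ and the rest, the scaling argument giving $P^0(A_r)\geq c\lambda^{-(\alpha-1)}$ via exactly one large jump of the $(\alpha-1)$-stable subordinator, and the identification $\int_0^\infty \Pb\left(\sigma(h')\in(1,2)\right)dh'=\N_0\left[\sigma\mathbbm{1}\{\sigma\in(1,2)\}\right]\geq \N_0(\sigma\in(1,2))>0$ are all sound. The genuine gap is in your very first step: under $\N_0$ the point $U$ is uniform on $[0,\sigma]$, so the left-hand side of the lemma is $\N_0\bigl[\tfrac{1}{\sigma}\int_0^\sigma \mathbbm{1}_{A_r(s)}\mathbbm{1}\{1<\tilde{\sigma}_r<2\}\,ds\bigr]$ and not $\N_0\bigl[\int_0^\sigma\mathbbm{1}_{A_r(s)}\mathbbm{1}\{1<\tilde{\sigma}_r<2\}\,ds\bigr]$ (this is how the paper's proof begins). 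On the event in question $\sigma\geq\tilde{\sigma}_r>1$, so the unnormalised quantity you bound is \emph{larger} than the target, and a lower bound on it does not yield the lemma.

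Repairing this requires bounding $1/\sigma$ from below on a sub-event, i.e.\ controlling $\sigma-\tilde{\sigma}_r$, the total mass of the subtrees grafted inside $I_r$ (in particular those hanging off the large hub), which your argument never addresses. This is exactly where the hypothesis $\lambda\ll r^{-\frac{2}{\alpha-1}}$ enters, and the fact that you never use it is the warning sign: your unnormalised bound holds for all $\lambda>1$, whereas the true statement should fail once $\lambda r^{\frac{2}{\alpha-1}}$ is of order $1$ or larger, since a hub of size $m=\lambda r^{\frac{2}{\alpha-1}}$ carries a Poisson collection of subtrees with intensity $m$ times the excursion measure, whose total mass is typically of order $m^{\alpha}$ by \eqref{eqn:N lifetime tails}; then $1/\sigma$ is typically very small and the weighted probability drops well below $\lambda^{-(\alpha-1)}$. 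The paper deals with this by restricting $h\in[1,2]$, inserting the event $\{\sigma-\tilde{\sigma}_r\leq\tfrac12\}$ together with $\{1<\tilde{\sigma}_r<\tfrac32\}$ (which forces $\sigma<2$, hence $1/\sigma\geq\tfrac12$), and showing that $E^0\bigl[\Pb_{\mathcal{P}_1}\bigl(\sigma-\tilde{\sigma}_r>\tfrac12\bigr)\bigr]$ is negligible compared with $\lambda^{-(\alpha-1)}$ precisely because $\lambda\ll r^{-\frac{2}{\alpha-1}}$. Adding such a step (and correspondingly keeping $\sigma$ bounded on the event you construct) would complete your argument; the remaining ingredients you give are correct.
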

\begin{proof}

We extend and abuse our notation slightly by writing $A_r(s)$ for the event $A_r$ conditionally on the event $U=s$, and by writing $A_r(U^{(1)}_{[0, r^2]})$ for the relevant event in terms of $U^{(1)}$ on the interval $[0, r^2]$ where $U^{(1)}$ is the subordinator discussed above.

We will apply \cref{prop:firstmomentrange ds} and lower bound the right hand side by considering only the contribution from $h \in [1,2]$. We take the setup described above this lemma, and for convenience we will index everything by distance from $p(U)$, so that a jump of $U^{(1)}$ at time $t$ corresponds to a half-hub on the right hand side of the path at distance $t$ from $p(U)$.

In addition, we will let $\mathcal{P}_1$ denote the Poisson process of trees grafted to the spine within distance $r^2$ of $p(U)$ (in red in Figure \ref{fig:red blue spine}), and $\mathcal{P}_2$ denote the Poisson process of trees grafted everywhere else (in blue in Figure \ref{fig:red blue spine}). These are independent. Similarly the jumps of $U^{(1)}$ in $I_r$ are independent of the jumps of $U^{(1)}+U^{(2)}$ outside of $I_r$. By Proposition \ref{prop:firstmomentrange ds}, this allows us to factorise as follows:
\begin{align*}
&\N_x \left[ \frac{\mathbbm{1}\{1 < \tilde{\sigma}_r < 2\}}{\sigma} \int_0^{\sigma} A_r(s) ds \right] \\
&\qquad \geq \frac{1}{2} \int_1^2 dh E^0 \left[ A_r(U^{(1)}_{[0, r^2]}), \sigma < 2, 1 < \tilde{\sigma}_r < 2 \right] \\
&\qquad \geq \frac{1}{2} \int_1^2 dh E^0 \left[ A_r(U^{(1)}_{[0, r^2]}), \sigma - \tilde{\sigma}_r \leq \frac{1}{2} \right] E^0\left[ \mathbb P_{\mathcal{P}_2} \left( 1 < \tilde{\sigma}_r < \frac{3}{2} \right)\right] \\
&\qquad \geq \frac{1}{2} \int_1^2 dh \left\{ P^0 \left( A_r(U^{(1)}_{[0, r^2]}) \right) -  E^0\left[ \mathbb P_{\mathcal{P}_1} \left( \sigma - \tilde{\sigma}_r > \frac{1}{2} \right) \right] \right\} E^0\left[ \mathbb P_{\mathcal{P}_2} \left( 1 < \tilde{\sigma}_r < \frac{3}{2} \right)\right].
\end{align*}
We start by bounding $P^0 \left( A_r(U^{(1)}_{[0, r^2]})  \right)$ (this takes up most of the proof). We assume for now that $\lambda>2$ and separate the jumps of $U^{(1)}_{[0,r^2]}$ into three groups:
\begin{itemize}
\item those of size less than $r^{\frac{2}{\alpha-1}}$,
\item those of size more than $\lambda r^{\frac{2}{\alpha-1}}$, 
\item those of size in $[r^{\frac{2}{\alpha-1}}, \lambda r^{\frac{2}{\alpha-1}}]$.
\end{itemize}
By standard properties of subordinators and Poisson processes, these three collections of jumps can be considered independently. The sum of the jumps of $U^{(1)}$ in $I_r$ with size at most $r^{\frac{2}{\alpha-1}}$ is stochastically dominated by an independent copy of $U^{(1)}_{r^2}$; hence there is a constant probability that the sum is at most $r^{\frac{2}{\alpha-1}}$ using scaling invariance of $U^{(1)}$ (which implies that the probability is the same for all $r$). The number of jumps of size exceeding $xr^{\frac{2}{\alpha-1}}$ is \textsf{Poisson}($C_{\alpha}x^{-(\alpha-1)}$), where $C_{\alpha}$ is as in \eqref{eqn:pi and C def}; hence for all $\lambda > 2$ the probability that there are zero with size in $[r^{\frac{2}{\alpha-1}}, \lambda r^{\frac{2}{\alpha-1}}]$ and exactly one with size exceeding $\lambda r^{\frac{2}{\alpha-1}}$ is lower bounded by $e^{-2C_{\alpha}}C_{\alpha}\lambda^{-(\alpha-1)}$. We obtain the desired result by multiplying these three probabilities. This implies that $P^0 \left( A_r(U^{(1)}_{[0, r^2]})  \right) \geq c\lambda^{-(\alpha-1)}$ for all $\lambda>2$ and extends to all $\lambda>1$ by monotonicity.

It is then straightforward to verify that $E^0 \left[\mathbb P_{\mathcal{P}_2} \left( 1 < \tilde{\sigma}_r < \frac{3}{2} \right)\right]$ is lower bounded by positive quantities which don't depend on $r$ (whenever $r$ is sufficiently small), and that (by \eqref{eqn:N lifetime tails}, standard results on tails of stable processes and our assumption that $\lambda \ll r^{-\frac{2}{\alpha-1}}$) there exists $c>0$ such that
\[
 E^0\left[ \mathbb P_{\mathcal{P}_1} \left( \sigma - \tilde{\sigma}_r > \frac{1}{2} \right) \right] \leq cr^{-2} \ll c \lambda^{-(\alpha-1)},
\]
thus concluding the proof of the lemma.
\end{proof}

According to the formula in \cref{prop:firstmomentrange ds}, conditionally on $U^{(1)}$, the subtrees grafted to the right hand side of the spine in the interval $I_r$ have the law of a Poisson process with intensity
\begin{equation}
\mathbbm{1}\{t \in [0,r^2]\} dU^{(1)}_t \otimes \mathbb N_{\xi_t} (dw),
\end{equation}
where $(\xi_t)_{t \geq 0}$ is the Brownian motion appearing in  \cref{prop:firstmomentrange ds}; in other words the Brownian motion on the spine indexed by distance from $\pi(U)$.

Conditionally on the event $A_r$, we now consider the following event $B_r=B_{r, \lambda}$ (again we drop the dependence on $\lambda$ to ease notation). Let $h = h_{r, \lambda}$ denote the hub of size at least $\lambda r^{\frac{2}{\alpha-1}}$. Throughout the rest of the proof, when we consider the values of the snake restricted to a subtree, we will retain its original values (i.e. we will \textit{not} re-root the snake to start at $0$).

The event $B_r$ is then defined as follows (recall that we parametrised time so that $\xi_0$ is the value of the snake at $p(U)$, and that $\mathcal{R}$ denotes the range of the snake):
\begin{itemize}
\item The values of the snake in $I_r$ are all in the interval $[\xi_0-\frac{r}{16}, \xi_0+\frac{r}{16}]$.
\item All subtrees grafted to a hub $h'$ in $[0, r^2]$ that is not $h$ satisfy $\mathcal{R} (Z) \subset [ \xi_{h'} - \frac{r}{16},  \xi_{h'} + \frac{r}{16}]$.
\end{itemize}

We then have the following. 
\begin{lemma}\label{lem:Br given Ar}
There exists $c' > 0$ such that for all $r>0, 1< \lambda < \infty$,
\[
\N_0 (B_r | A_r, 1< \tilde{\sigma}_r < 2) \geq c'.
\]
\end{lemma}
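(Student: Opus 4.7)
I would establish the lower bound by analysing the two conditions defining $B_r$ separately, exploiting the factorisation of the spinal decomposition given in~\cref{prop:firstmomentrange ds}. Conditionally on the subordinators $(U^{(1)}, U^{(2)})$ and on the Brownian motion $\xi$ along the spine, the subtrees grafted to the spine form a Poisson point process independent of both objects. The two parts of $B_r$ are then: (i) a constraint on $\xi$ on the time interval $[0, r^2]$ of length $r^2$; and (ii) a constraint on the ``red" Poisson process of subtrees grafted at hubs of $I_r$ other than the large hub $h$. The crucial point is that $\{1 < \tilde{\sigma}_r < 2\}$ and $A_r$ are measurable with respect to quantities that are either entirely outside $I_r$ or, in the case of $U^{(1)}|_{[0,r^2]}$, independent of $\xi$ and of the Poisson process of subtrees given the subordinators. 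Translation invariance of the snake will then ensure that conditioning on $\tilde{\sigma}_r$ does not affect the lower bounds.

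\textbf{Step 1 (spine fluctuations).} By Brownian scaling, $\sup_{t \in [0,r^2]} |\xi_t - \xi_0|$ has the law of $r \sup_{s \in [0,1]} |B_s|$ for a standard Brownian motion $B$, so
\[
P\!\left(\sup_{t \in [0,r^2]} |\xi_t - \xi_0| < r/16\right) \geq p_1 > 0
\]
for some constant $p_1$ independent of $r$.

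\textbf{Step 2 (small-hub subtrees).} Given $\xi$ and the subordinators, the subtrees grafted at hubs of $I_r$ other than $h$ form a Poisson point process with intensity $d(\tilde U^{(1)}_t + U^{(2)}_t) \otimes \mathbb N_{\xi_t}(dw)$ on $[0,r^2]$, where $\tilde U^{(1)}$ denotes $U^{(1)}$ with its jump at $h$ removed. By~\cref{prop:Poisson master formula}, the probability that every such subtree satisfies $\mathcal R(w) \subset [\xi_t - r/16, \xi_t + r/16]$ equals
\[
\exp\!\left\{-\int_0^{r^2} d(\tilde U^{(1)} + U^{(2)})_t\, \mathbb N_{\xi_t}\!\left(\mathcal R(w) \not\subset [\xi_t - r/16, \xi_t + r/16]\right)\right\}.
\]
By translation invariance of the snake and~\eqref{eq:rangesnake}, the integrand is bounded by $2 \mathbb N_0(r/16 \in \mathcal R) \leq C r^{-\frac{2}{\alpha-1}}$. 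On $A_r$ we have $\tilde U^{(1)}_{r^2} \leq r^{\frac{2}{\alpha-1}}$, and by scaling $U^{(2)}_{r^2} \stackrel{(d)}{=} r^{\frac{2}{\alpha-1}} U^{(2)}_1$, so there is a constant $K$ with $P(U^{(2)}_{r^2} \leq K r^{\frac{2}{\alpha-1}}) \geq 1/2$ uniformly in $r$. Intersecting these events gives a lower bound of $\frac{1}{2} \exp\{-C(1+K)\}$ for the probability in Step 2.

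\textbf{Step 3 (combine).} Since Steps 1 and 2 concern functionals of $\xi|_{[0,r^2]}$ and of an independent Poisson process (whose control is uniform in $\xi$ after translation invariance), and since both are conditionally independent of the ``blue" stuff outside $I_r$ given $\xi_{r^2}$, the multiplication of the three probability bounds yields $\N_0(B_r \mid A_r, 1 < \tilde{\sigma}_r < 2) \geq c' > 0$. The main technical point to handle carefully is checking that conditioning on the compound event $\{A_r, 1 < \tilde{\sigma}_r < 2\}$ does not spoil this, which follows from the Markov property of $\xi$ at time $r^2$ together with translation invariance of both the Brownian motion and the snake: the bounds in Steps 1 and 2 are invariant under shifting by the value $\xi_{r^2}$, which is the only variable linking the red and blue parts of the decomposition.
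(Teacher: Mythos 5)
Your Steps 1 and 3 are fine and match the paper's argument (Brownian scaling for the spine values, and the factorisation of the spinal decomposition which makes the conditioning on $\{1<\tilde{\sigma}_r<2\}$ harmless because $\tilde\sigma_r$ only involves the subtrees grafted outside $I_r$). The problem is in Step 2, and it comes from the dependence between $U^{(1)}$ and $U^{(2)}$: these subordinators are \emph{not} independent — they jump at the same times, the total size of each hub being split between the two sides (a uniform fraction on each side, cf.\ the discussion after \eqref{eqn:psi tilde prime Laplace rep}). On $A_r$ the right half of the distinguished hub $h$ exceeds $\lambda r^{\frac{2}{\alpha-1}}$, and conditionally on this the \emph{left} half of the same hub is of order $\lambda r^{\frac{2}{\alpha-1}}$ with probability bounded away from $0$ (the conditional probability that it is below $Kr^{\frac{2}{\alpha-1}}$ is $O(K/\lambda)$). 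Your intensity $d(\tilde U^{(1)}+U^{(2)})_t\otimes\N_{\xi_t}(dw)$ keeps the jump of $U^{(2)}$ at the location of $h$, so your Step 2 event also asks the snake to stay in a window of width of order $r$ in every subtree grafted to the \emph{left} side of $h$; by \eqref{eq:rangesnake} the number of such subtrees in which the snake escapes is then Poisson with mean of order $\lambda$, so the conditional probability of your event decays like $e^{-c\lambda}$ and cannot be bounded below uniformly in $\lambda$, which is exactly what the lemma requires. For the same reason the unconditional scaling bound $P(U^{(2)}_{r^2}\le Kr^{\frac{2}{\alpha-1}})\ge \tfrac12$ cannot simply be ``intersected'' with $A_r$: conditioning on $A_r$ changes the law of $U^{(2)}$ on $[0,r^2]$ through the shared jump times, and this conditional probability tends to $0$ as $\lambda\to\infty$.

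The paper avoids this entirely because the second condition in $B_r$ is only imposed on (and only needed for) the subtrees grafted to the \emph{right-hand} side of the spine at hubs of $I_r$ other than $h$, i.e.\ the Poisson process with intensity $\mathbbm{1}\{t\in[0,r^2]\}dU^{(1)}_t\otimes\N_{\xi_t}(dw)$ displayed just before the definition of $B_r$; this is all that the geometric argument of \cref{lem:Vr distance bound 2} uses. On $A_r$ the corresponding intensity mass (the sum of the right half-hubs other than $h$) is at most $r^{\frac{2}{\alpha-1}}$ deterministically, so by \eqref{eq:rangesnake} the number of offending subtrees is dominated by a Poisson variable with mean $2\bigl(16^2\frac{\alpha+1}{(\alpha-1)^2}\bigr)^{\frac{1}{\alpha-1}}$, uniformly in $r$ and $\lambda$, and no control of $U^{(2)}$ is needed. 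To repair your proof, either restrict the constraint to the right-hand subtrees as in the paper, or — if you wish to constrain the left-hand subtrees at hubs other than $h$ — delete the jump of $U^{(2)}$ at $h$ as well and prove, using the joint law of the two halves of each hub, that the left halves of the remaining hubs sum to $O(r^{\frac{2}{\alpha-1}})$ with conditional probability bounded below given $A_r$, uniformly in $\lambda$; unconditional scaling of $U^{(2)}$ alone does not give this.
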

\begin{proof}

Note that the two events appearing in the bullet points above can be treated independently of each other and also independently of the event $\{1< \tilde{\sigma}_r<2\}$ since the Poisson process representation holds conditionally on $\xi$, and since different parts of the Poisson process are independent. The first event has uniformly positive probability by the scaling property of Brownian motion. Since the total mass of all the other hubs excluding $h$ is at most $r^{\frac{2}{\alpha-1}}$ on the event $A_r$, the number of subtrees violating the second bullet point is stochastically dominated by a Poisson random variable with mean $2 \left( 16^2\frac{\alpha + 1}{(\alpha - 1)^2} \right)^{\frac{1}{\alpha -1}}$ (also using \eqref{eq:rangesnake}), and hence is also zero with positive probability.

In summary, we obtained a uniformly positive probability for all both of the desired events, so the result follows on multiplying these (and adjusting the resulting constant lower bound to extend to all $\lambda>1$).
\end{proof}

On the event $A_r$ we now let $V_r = V_{r, \lambda}$ denote the collection of subtrees grafted to $h$ in which the snake does not exit $[\xi_h-r/8, \xi_h+r/8]$.

\begin{lemma}\label{lem:vol set LB cond}
Let $c'$ be the constant appearing in \cref{lem:Br given Ar}. Then there exists $\tilde{c}>0$ such that, for all $r>0, 2\tilde{c}< \lambda < \infty$,
\[
\N_0(\nu (V_r) \geq \tilde{c}\lambda r^{\frac{2\alpha}{\alpha-1}}|A_r, 1< \tilde{\sigma}_r < 2) \geq 1-\frac{c'}{2}.
\]
\end{lemma}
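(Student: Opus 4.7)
The strategy is a second-moment / Chebyshev argument applied to the Poisson process of subtrees grafted to the large hub $h$. By the spinal decomposition of \cref{prop:firstmomentrange ds}, conditionally on the spine from $\rho$ to $p(U)$ and on the Brownian snake values $(\xi_t)$ along it, the subtrees grafted to $h$ on the right-hand side form a Poisson point process $\mathcal{P}_h$ on $\W$ with intensity $m\,\N_{\xi_h}(dw)$, where $m$ denotes the half-hub mass at $h$. The event $A_r$ is measurable with respect to the restriction of $U^{(1)}$ to $I_r$ and forces $m \geq \lambda r^{2/(\alpha-1)}$. Since $\tilde\sigma_r$ depends only on subtrees grafted outside $I_r$, it is independent of $\mathcal{P}_h$ under this conditioning, and by shift invariance of Brownian motion we may take $\xi_h = 0$. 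Thus $\nu(V_r) = \sum_{w \in \mathcal{P}_h} \sigma(w) \mathbbm{1}\{\mathcal{R}(w) \subset [-r/8, r/8]\}$ is a Poisson integral with intensity $m\,\N_0(dw) \mathbbm{1}\{\mathcal{R}(w) \subset [-r/8, r/8]\}$.

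Standard Poisson moment formulas combined with the \Ito scaling \eqref{eqn:Ito scaling} applied with $\lambda_0 = (8/r)^{2\alpha/(\alpha-1)}$ (so that $[-r/8, r/8]$ pulls back to $[-1, 1]$) then give
\begin{align*}
\econd{\nu(V_r)}{m} = C_1\, m\, r^2, \qquad \Varcond{\nu(V_r)}{m} = C_2\, m\, r^{(4\alpha-2)/(\alpha-1)},
\end{align*}
for explicit $\alpha$-dependent constants $C_1, C_2 \in (0, \infty)$ proportional to $\N_0(\sigma\,\mathbbm{1}\{\mathcal{R} \subset [-1,1]\})$ and $\N_0(\sigma^2\,\mathbbm{1}\{\mathcal{R} \subset [-1,1]\})$ respectively. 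Using $m \geq \lambda r^{2/(\alpha-1)}$, the powers of $r$ cancel and one obtains
\begin{align*}
\econd{\nu(V_r)}{m} \geq C_1\, \lambda\, r^{2\alpha/(\alpha-1)}, \qquad \frac{\Varcond{\nu(V_r)}{m}}{\econd{\nu(V_r)}{m}^2} \leq \frac{C_2}{C_1^2\,\lambda}.
\end{align*}

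Chebyshev's inequality at the $1/2$ level then yields $\nu(V_r) \geq (C_1/2)\lambda r^{2\alpha/(\alpha-1)}$ with conditional probability at least $1 - 4C_2/(C_1^2 \lambda)$. Choosing $\tilde c$ with $\tilde c \leq C_1/2$, but also large enough that the constraint $\lambda > 2\tilde c$ forces $4C_2/(C_1^2\lambda) \leq c'/2$ (one can always decrease $\tilde c$ at the cost of a slightly worse Chebyshev level, replacing $1/2$ by some smaller $\theta$ and using $\theta(1-\theta)^2$ in the denominator), and integrating out $m$ and the independent conditioning on $\{1<\tilde\sigma_r<2\}$, then yields the claim.

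The main obstacle is verifying that $C_1$ and $C_2$ are finite. By \eqref{eqn:Ito measure integrate s}, $\N_0(\sigma^k \mathbbm{1}\{\mathcal{R} \subset [-1,1]\})$ equals a constant times $\int_0^\infty s^{k - 1/\alpha - 1} \N^{(s)}(\mathcal{R} \subset [-1, 1])\,ds$. Integrability near $s = 0$ is automatic for $k \geq 1$ since $k - 1/\alpha - 1 > -1$; for large $s$, under $\N^{(s)}$ the underlying stable tree has height of order $s^{1 - 1/\alpha}$, and the Brownian-indexed snake along any such branch has positive probability of exiting $[-1, 1]$, so a union / branching argument on the skeleton of the stable tree yields super-polynomial (in fact at least stretched exponential) decay of $\N^{(s)}(\mathcal{R} \subset [-1, 1])$ in $s$. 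This ensures finiteness of both $C_1$ and $C_2$.
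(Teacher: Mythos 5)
Your route (compute the first and second moments of the Poisson integral of confined subtree volumes attached to the hub, then apply Chebyshev) is genuinely different from the paper's, and the moment computations themselves are sound: with half-hub mass $m$, the scaling \eqref{eqn:Ito scaling} indeed gives conditional mean $C_1 m r^2$ and conditional variance $C_2 m r^{(4\alpha-2)/(\alpha-1)}$, hence relative variance at most $C_2/(C_1^2\lambda)$ on $A_r$, and your independence claims (from $\tilde\sigma_r$ and from the rest of $A_r$ given the hub mass) match the paper's. The gap is in the final step. For Chebyshev to say anything the threshold must sit below the guaranteed mean, i.e.\ $\tilde c\le\theta C_1$ for some level $\theta\in(0,1)$; the failure probability you then control is at most $C_2/\bigl((1-\theta)^2C_1^2\lambda\bigr)$, and since the lemma demands the bound for \emph{every} $\lambda>2\tilde c$, the binding case is $\lambda\approx 2\tilde c\le 2\theta C_1$, where your bound is at least $C_2/\bigl(2\theta(1-\theta)^2C_1^3\bigr)\ge 27C_2/(8C_1^3)$, no matter how $\theta$ and $\tilde c$ are chosen. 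This is a fixed constant depending only on $\alpha$, whereas the lemma must beat $c'/2$ with $c'$ the (possibly very small) constant handed to you by \cref{lem:Br given Ar}; your parenthetical remedy of replacing $1/2$ by $\theta$ only changes the constant and does not remove the obstruction. In short, a second-moment argument cannot make the conditional failure probability arbitrarily small uniformly down to $\lambda$ of order one, and making it smaller than a prescribed $c'/2$ is exactly the content of the lemma.

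The paper's proof avoids this by working with the Laplace transform: by \cref{prop:Poisson master formula}, $\N_0(e^{-\kappa\nu(V_r)}\mid A_r)\le\exp\bigl\{-\lambda r^{2/(\alpha-1)}\,\N_0\bigl((1-e^{-\kappa\sigma})\mathbbm{1}\{\tfrac r8,-\tfrac r8\notin\mathcal R\}\bigr)\bigr\}$, and taking $\kappa=c\lambda^{-1}r^{-2\alpha/(\alpha-1)}$ with $c$ large (so $\tilde c=c^{-1}$ small), after rescaling and restricting the intensity to excursions with $1\le\sigma\le2$, yields a failure probability at most $e\exp\{-Cc/2\}$. Because $c$ is a free parameter sitting in the exponent, this can be pushed below $c'/2$; morally it says that with overwhelming probability there is at least one confined subtree of volume of order $\tilde c\lambda r^{2\alpha/(\alpha-1)}$, a ``no large atom'' estimate that Chebyshev cannot reproduce. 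To salvage your approach you would need precisely such an estimate at the low end of the $\lambda$-range, at which point you have rederived the paper's argument. A secondary, fixable point: your variance constant requires $\N_0(\sigma^2\mathbbm{1}\{\mathcal R\subset[-1,1]\})<\infty$, which you only sketch; the paper's insertion of the indicator $\{1\le\sigma\le2\}$ sidesteps any integrability question, and you could do the same by restricting your Poisson integral to subtrees with rescaled lifetime in $[1,2]$.
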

\begin{proof}
First note that we can drop the dependence on $\tilde{\sigma}_r$ since this is independent of all variables that only depend on the trees grafted to $I_r$, and in particular $(V_r,\indic{A_r})$. By \cref{prop:firstmomentrange ds}, \cref{prop:Poisson master formula} and the scaling property \eqref{eqn:Ito scaling}, for any $\kappa>0$ we can then write
\begin{align*}
\N_0 \left( e^{-\kappa V_r} |A_r \right) &\leq \exp \left\{-\lambda r^{\frac{2}{\alpha-1}} \N_0 \left( (1-e^{-\kappa \sigma}) \mathbbm{1}\left\{ \frac{r}{8}, \frac{-r}{8} \notin \mathcal{R}\right\} \right)\right\} \\
&\leq \exp \left\{-\lambda (\kappa r^{\frac{2\alpha}{\alpha-1}} - \kappa^2 r^{\frac{4\alpha}{\alpha-1}})  \N_0 \left(1 \leq \sigma \leq 2, \frac{1}{8},  \frac{-1}{8} \notin \mathcal{R} \right) \right\}.
\end{align*}
Hence setting $\kappa =c \lambda^{-1} r^{-\frac{2\alpha}{\alpha-1}}$ and $C=\N_0 \left(1 \leq \sigma \leq 2, \frac{1}{8}, \frac{-1}{8} \notin \mathcal{R} \right)$ we see that
\begin{align*}
\N_0 \left( e^{-c\lambda^{-1}r^{-\frac{2\alpha}{\alpha-1}} V_r} |A_r \right) 
&\leq \exp \left\{-C (c-c^2\lambda^{-1}) \right\} \leq \exp \left\{-Cc/2\right\}
\end{align*}
for all $\lambda > 2c$. In particular, we can choose $c>0$ large enough so that for all such $\lambda$,
\begin{align*}
\N_0(V_r \leq c^{-1}\lambda r^{-\frac{2\alpha}{\alpha-1}}) \leq e \exp \left\{-Cc/2\right\} \leq \frac{c'}{2},
\end{align*}
where $c'$ is as in \cref{lem:Br given Ar}.
\end{proof}

In particular the results of the last two lemmas imply the following.

\begin{corollary}\label{cor:N Br Ar tie up}
There exist $c, \tilde{c} \in (0, \infty)$ such that:
\begin{enumerate}[(a)]
\item For all  $r>0, 1< \lambda < \infty$, it holds that $\N_0(B_r \text{ and } \nu (V_r) \geq \tilde{c}\lambda r^{\frac{2\alpha}{\alpha-1}}|A_r, 1< \tilde{\sigma}_r<2) \geq c$. 
\item For all $r>0, 1< \lambda < \infty$,
\[
\N_0(B_r \text{ and } \nu (V_r) \geq \tilde{c}\lambda r^{\frac{2\alpha}{\alpha-1}}, A_r, 1< \tilde{\sigma}_r<2) \geq c \lambda^{-(\alpha-1)}.
\]
\item For all $r>0, 1< \lambda < \infty$,
\[
\N_0(B_r \text{ and } \nu (V_r) \geq \tilde{c}\lambda r^{\frac{2\alpha}{\alpha-1}}, A_r, r^{\frac{2\alpha}{\alpha-1}}< \tilde{\sigma}_r<2r^{\frac{2\alpha}{\alpha-1}}) \geq c \lambda^{-(\alpha-1)} r^{-\frac{2}{\alpha-1}}.
\]
\end{enumerate}
\end{corollary}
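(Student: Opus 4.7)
\textbf{Plan for Corollary \ref{cor:N Br Ar tie up}.}

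For part (a), I would combine Lemmas \ref{lem:Br given Ar} and \ref{lem:vol set LB cond} via a union bound on complements. Let $c'$ be the constant from Lemma \ref{lem:Br given Ar}. Conditionally on $A_r \cap \{1 < \tilde{\sigma}_r < 2\}$, the event $B_r^c$ has probability at most $1 - c'$, and the event $\{\nu(V_r) < \tilde{c}\lambda r^{\frac{2\alpha}{\alpha-1}}\}$ has probability at most $c'/2$ by Lemma \ref{lem:vol set LB cond}. Hence the intersection $B_r \cap \{\nu(V_r) \geq \tilde{c}\lambda r^{\frac{2\alpha}{\alpha-1}}\}$ has conditional probability at least $c'/2$, giving part (a) with $c := c'/2$.

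Part (b) then follows immediately by multiplying part (a) by the unconditional lower bound of Lemma \ref{lem:Ar prob LB} on $\N_0(A_r \cap \{1 < \tilde\sigma_r < 2\}) \geq c\lambda^{-(\alpha-1)}$.

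For part (c), the natural strategy is to invoke the scaling property \eqref{eqn:Ito scaling} of the stable snake with scaling factor $\mu = r^{-\frac{2\alpha}{\alpha-1}}$. Under $\Phi_\mu$ one verifies that each ingredient of the event rescales compatibly:
\begin{itemize}
\item total lifetime: $\sigma$ scales by $\mu$, so $\tilde\sigma_r \in [r^{\frac{2\alpha}{\alpha-1}}, 2r^{\frac{2\alpha}{\alpha-1}}]$ becomes $\tilde\sigma \in [1,2]$;
\item heights in $\Ta$ scale by $\mu^{1-1/\alpha} = r^{-2}$, so the spinal interval $I_r$ of height-length $r^2$ becomes an interval of height-length $1$;
\item jump sizes of the Lévy process (equivalently, hub masses along the spine, and the subordinators $U^{(1)}, U^{(2)}$) scale by $\mu^{1/\alpha} = r^{-\frac{2}{\alpha-1}}$, so the threshold $\lambda r^{\frac{2}{\alpha-1}}$ on the half-hub defining $A_r$ becomes $\lambda$;
\item snake values (and hence the ranges controlling $B_r$ and $V_r$) scale by $\mu^{\frac{1}{2}(1-1/\alpha)} = r^{-1}$, mapping the intervals $[\xi_\cdot - r/16, \xi_\cdot + r/16]$ and $[\xi_h - r/8, \xi_h + r/8]$ to $[\cdot - 1/16, \cdot + 1/16]$ and $[\cdot - 1/8, \cdot + 1/8]$;
\item volumes (i.e.\ the occupation measure) scale by $\mu$, so $\nu(V_r) \geq \tilde c \lambda r^{\frac{2\alpha}{\alpha-1}}$ becomes $\nu(V_1) \geq \tilde c \lambda$.
\end{itemize}
Hence the event appearing in part (c) at radius $r$ is mapped to the event in part (b) at radius $1$ (with the same $\lambda$). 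Since \eqref{eqn:Ito scaling} asserts that the pushforward of $\N_0$ under $\Phi_\mu$ equals $\mu^{1/\alpha}\N_0 = r^{-\frac{2}{\alpha-1}}\N_0$, the $\N_0$-mass of the event in part (c) equals $r^{-\frac{2}{\alpha-1}}$ times the $\N_0$-mass of the event in part (b) at $r=1$, and applying part (b) concludes the proof.

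The only delicate point is to confirm that the spinal decomposition of \cref{prop:firstmomentrange ds} that underlies the definitions of $A_r$, $B_r$ and $V_r$ (jumps of the subordinator $(U^{(1)}, U^{(2)})$ along the spine together with a Poisson process of grafted snake subtrajectories) transforms in the expected way under $\Phi_\mu$; this is essentially automatic since the three ingredients (the Lévy process driving the spine, the two-dimensional subordinator, and the indexing Brownian motion) each satisfy compatible scaling relations, and the Poisson intensity \eqref{eqn:PP intensity two sided} transforms covariantly. Apart from this bookkeeping, the rest of the argument is a direct combination of the preceding lemmas.
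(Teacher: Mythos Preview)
Your proposal is correct and matches the paper's own proof essentially line for line: part (a) by subtracting the complementary bound of Lemma \ref{lem:vol set LB cond} from Lemma \ref{lem:Br given Ar}, part (b) by multiplying with Lemma \ref{lem:Ar prob LB}, and part (c) by applying (b) at $r=1$ together with the It\^o scaling relation \eqref{eqn:Ito scaling}. The one small point you omit is that Lemma \ref{lem:vol set LB cond} only covers $\lambda>2\tilde c$; the paper notes that the remaining range $1<\lambda\le 2\tilde c$ is handled by monotonicity in $\lambda$ (the event $\{\nu(V_r)\ge \tilde c\lambda r^{2\alpha/(\alpha-1)}\}$ is decreasing in $\lambda$) after possibly adjusting the constant.
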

\begin{proof}
Part (a) follows by subtracting the estimate of Lemma \ref{lem:vol set LB cond} (for the complementary event) from that of Lemma \ref{lem:Br given Ar}. Note that we can remove the restriction that $\lambda > 2\tilde{c}$ using monotonicity and adapting the constant $c$. Part (b) follows by combining (a) with Lemma \ref{lem:Ar prob LB}. Part (c) then follows by applying (b) with $r=1$, and then applying the scaling relation \eqref{eqn:Ito scaling}.
\end{proof}

Part (a) in turn leads us to the following result.

\begin{corollary}\label{cor:N Br Ar tie up normalized}
For all  $r>0, 1< \lambda < r^{-\frac{\alpha}{\alpha-1}}$, there exists $c>0$ such that $\Noo_0(B_r \text{ and } \nu (V_r) \geq \tilde{c}\lambda r^{\frac{2\alpha}{\alpha-1}}|A_r) \geq c$.
\end{corollary}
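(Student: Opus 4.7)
The strategy is to transfer the $\N_0$-bound in \cref{cor:N Br Ar tie up}(a) to the normalised measure $\Noo_0$ by combining a strengthened version of \cref{cor:N Br Ar tie up}(b) with the disintegration formula \eqref{eqn:Ito measure integrate s} and the snake scaling \eqref{eqn:Ito scaling}. Throughout I write $E_r := B_r \cap \{\nu(V_r) \geq \tilde{c}\lambda r^{2\alpha/(\alpha-1)}\}$.

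First, I would sharpen \cref{cor:N Br Ar tie up}(b) by replacing the window $\{\tilde{\sigma}_r \in [1,2]\}$ with a thin slab $\{\sigma \in [1,1+\epsilon]\}$ for some small fixed $\epsilon>0$. Two additional inputs are needed. On the one hand, repeating the proof of \cref{lem:Ar prob LB} with the interval $\{\tilde{\sigma}_r \in [1, 1+\epsilon/2]\}$ in place of $\{\tilde{\sigma}_r \in [1,2]\}$ loses only a factor of order $\epsilon$: indeed, $\tilde{\sigma}_r$ is the total lifetime of an independent Poisson family of excursions under $\N$, and its law has a density bounded on compact subsets of $(0,\infty)$ (which can be read off from \eqref{eqn:Laplace sigma} and \eqref{eqn:U Laplace transform cts}). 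On the other hand, conditionally on $A_r$ the quantity $\sigma_r = \sigma - \tilde{\sigma}_r$ is, up to lower-order corrections coming from subtrees grafted to smaller hubs, the total lifetime of a Poisson family of $\N$-excursions with rate given by the size $\asymp \lambda r^{2/(\alpha-1)}$ of the big hub. By \eqref{eqn:Laplace sigma} this is a one-sided $1/\alpha$-stable variable of scale $\asymp \lambda^\alpha r^{2\alpha/(\alpha-1)}$, and the assumption $\lambda < r^{-\alpha/(\alpha-1)}$ gives
\[
\lambda^\alpha r^{2\alpha/(\alpha-1)} \leq r^{\alpha(2-\alpha)/(\alpha-1)} \xrightarrow[r\downarrow 0]{} 0.
\]
Hence $\{\sigma_r \leq \epsilon/2\}$ has probability bounded below uniformly in $r$ small and $\lambda$ in the prescribed range. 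Combining yields
\[
\N_0\bigl(E_r \cap A_r \cap \{\sigma \in [1,1+\epsilon]\}\bigr) \;\geq\; c\,\epsilon\,\lambda^{-(\alpha-1)}.
\]

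Second, I would apply the disintegration \eqref{eqn:Ito measure integrate s}. Since $s \mapsto s^{-1/\alpha-1}$ is bounded on $[1,1+\epsilon]$,
\[
\int_1^{1+\epsilon} \N^{(s)}(E_r \cap A_r)\,ds \;\geq\; c'\epsilon\,\lambda^{-(\alpha-1)},
\]
so there exists $s_0 \in [1,1+\epsilon]$ with $\N^{(s_0)}(E_r \cap A_r) \geq c''\lambda^{-(\alpha-1)}$. Then the scaling relation \eqref{eqn:Ito scaling}, applied with scaling parameter $\lambda_{\mathrm{sc}} = 1/s_0$, identifies $\N^{(s_0)}$ with $\Noo_0$ up to a change of spatial scale: radii get multiplied by $s_0^{-(\alpha-1)/(2\alpha)} = 1+O(\epsilon)$, hub sizes by $s_0^{-1/\alpha}$, volumes by $1/s_0$, all dimensionless combinations (in particular $\lambda$) being preserved. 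Consequently $\N^{(s_0)}(E_r \cap A_r) = \Noo_0(E_{r_0} \cap A_{r_0})$ with $r_0 = r\bigl(1+O(\epsilon)\bigr)$ and the constants $\tilde{c}$, $\lambda$ perturbed by $1+O(\epsilon)$. Choosing $\epsilon$ small and absorbing these perturbations into the constants $\tilde{c}$ and $c$ (using the manifest monotonicity of $B_r$ in $r$ and the fact that the conditions defining $E_r$ and $A_r$ are continuous in their thresholds) gives $\Noo_0(E_r \cap A_r) \geq c'''\lambda^{-(\alpha-1)}$. A matching upper bound $\Noo_0(A_r) \leq C \lambda^{-(\alpha-1)}$ is obtained by the same Poisson computation as in \cref{lem:Ar prob LB}, transferred to $\Noo_0$ via the same disintegration (but in the opposite direction; here the upper bound is automatic from \cref{lem:Ar prob LB} combined with the observation that $A_r \cap \{\sigma=1\}$ sits inside the event considered there after rescaling). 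Dividing yields the claimed conditional bound.

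The main technical obstacle is the passage from scale $r_0 = r(1+O(\epsilon))$ back to scale $r$ in step three: the events $E_r$ and $A_r$ involve several thresholds (snake window widths $r/16$ and $r/8$, hub-size threshold $\lambda r^{2/(\alpha-1)}$, and volume threshold $\tilde{c}\lambda r^{2\alpha/(\alpha-1)}$) which must all be adjusted coherently. This is handled by choosing $\epsilon$ small enough that the perturbation only changes the constants by a bounded factor, and then rescaling $\tilde{c}$ (which is permitted since $\tilde{c}$ is a universal constant whose value was never pinned down beyond the lower bound coming from \cref{lem:vol set LB cond}). No new probabilistic input is required once these bookkeeping choices are made.
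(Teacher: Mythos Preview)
Your approach is a valid alternative but structurally different from the paper's, and it introduces one extra burden that the paper avoids.

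The paper never leaves the conditional form. From \cref{cor:N Br Ar tie up}(a), which gives $\N_0(E_r\mid A_r,\,1<\tilde\sigma_r<2)\geq c$, the paper first shows that $\N_0(\sigma>3\mid A_r,\,1<\tilde\sigma_r<2)$ is small (via a direct bound on $(U^{(1)}+U^{(2)})_{r^2}$ using the restriction $\lambda<r^{-\alpha/(\alpha-1)}$), so the conditional bound survives when $\{1<\sigma<3\}$ is added to the conditioning. It then checks that $\N_0(1<\tilde\sigma_r<2\mid A_r,\,1<\sigma<3)$ is bounded below, arriving at $\N_0(E_r\mid A_r,\,1<\sigma<3)\geq C$, and invokes rescaling. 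Because the conditional probability is carried through, no matching \emph{upper} bound on $\Noo_0(A_r)$ is ever needed.

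You instead pass to the joint probability $\N_0(E_r\cap A_r\cap\{\sigma\in[1,1+\epsilon]\})\geq c\epsilon\lambda^{-(\alpha-1)}$, disintegrate, and then must divide by an upper bound $\Noo_0(A_r)\leq C\lambda^{-(\alpha-1)}$ to recover the conditional statement. This upper bound is plausible but not as automatic as you suggest: the Poisson computation in \cref{lem:Ar prob LB} lives under $\N_0$ via \cref{prop:firstmomentrange ds}, and transferring it to $\Noo_0$ runs into exactly the same disintegration/rescaling issues you are trying to resolve for the lower bound. So you have effectively doubled the work of the delicate step rather than avoided it.

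On the rescaling step itself, both arguments are somewhat informal. The paper's ``rescaling and adjusting constants slightly'' and your thin-slab perturbation $r\mapsto r_0=r(1+O(\epsilon))$ face the same obstacle: the events $A_r,B_r$ are not monotone in $r$ (enlarging $r$ widens the snake windows but also lengthens $I_r$ and raises the hub threshold), so ``absorbing into constants'' requires some regularity of $r\mapsto\Noo_0(E_r\cap A_r)$ that is not established. Your thin slab makes this more transparent but does not actually close the gap. If you want to make your route rigorous, the cleanest fix is to keep the conditional form through the disintegration (noting that the weighted average of $\N^{(s)}(E_r\mid A_r)$ over $s\in[1,1+\epsilon]$ is $\geq C$, with weights $\N^{(s)}(A_r)f(s)$), which eliminates the need for the separate upper bound on $A_r$.
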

\begin{proof}
We first claim that \cref{cor:N Br Ar tie up} straightforwardly implies the following: for all $\lambda \geq 1$ it holds that $\N_0(B_r \text{ and } \nu (V_r) \geq \tilde{c}\lambda r^{\frac{2\alpha}{\alpha-1}}|A_r, 1< \sigma<3) \geq c$. For this we will see that it is sufficient to show that 
\[
\N_0(\sigma > 3|A_r, 1< \tilde{\sigma}_r<2) \leq 1 - \frac{1-c}{1-\frac{c}{2}}.
\]
By the Poisson process decomposition of \cref{prop:firstmomentrange ds} and \cite[Section VIII.1, Proposition 4]{BertoinLevy}, we can apply \cref{prop:firstmomentrange ds} and \eqref{eqn:Laplace sigma} obtain that there exists $C<\infty$ such that, for $\beta = \frac{2-\alpha}{\alpha}$,
\begin{align*}
\N_0(\sigma > 3|A_r, 1< \tilde{\sigma}_r<2) &\leq P^0 \left( (U^{(1)}+U^{(2)})_{r^2} \geq r^{\beta} | U^{(1)}_{r^2} \geq \lambda r^{\frac{2}{\alpha-1}}  \right) +  \N_0(\sigma > 3|(U^{(1)}+U^{(2)})_{r^2} < r^{\beta}))\\
&\leq C \lambda^{\alpha-1} r^{2-\beta(\alpha-1)} + (1-e^{-1})(1-\exp\{-r^{\beta}\N_0 (1-e^{- \sigma})\}) \\
&\leq C\lambda^{\alpha-1} r^{2-\beta(\alpha-1)} + (1-e^{-1})(1-\exp\{-r^{\beta}\}) 
\leq Cr^{\frac{2-\alpha}{\alpha}}.
\end{align*}
In particular we use that $\lambda <  r^{-\frac{\alpha}{\alpha-1}} \ll r^{-\frac{2}{\alpha-1}}$ to give the final line.
Hence
\begin{align*}
\N_0(\sigma > 3|A_r, 1< \tilde{\sigma}_r<2) \leq Cr^{\frac{2-\alpha}{\alpha}} \leq 1 - \frac{1-c}{1-\frac{c}{2}}
\end{align*}
for all sufficiently small $r>0$. Hence we obtain a lower bound for the complementary event and deduce that 
\[
\N_0(B_r \text{ and } \nu (V_r) \geq \tilde{c}\lambda r^{\frac{2\alpha}{\alpha-1}}|A_r, 1< \tilde{\sigma}_r<2, 1<\sigma<3) \geq 1 - (1- c)\frac{1-\frac{c}{2}}{1-c} = \frac{c}{2}.
\]
It is moreover straightforward to show that $\N_0(1<\tilde{\sigma}_r<2|A_r, 1<\sigma<3)$ is lower bounded by a constant using the same types of arguments (we omit the details) and hence that there exists $c>0$ such that, for any $r>0, 1 < \lambda < r^{-\frac{\alpha}{\alpha-1}}$,
 \begin{align*}
\N_0(B_r \text{ and } \nu (V_r) \geq \tilde{c}\lambda r^{\frac{2\alpha}{\alpha-1}}|A_r, 1<\sigma<3) &\geq C\N_0(B_r \text{ and } \nu (V_r) \geq \tilde{c}\lambda r^{\frac{2\alpha}{\alpha-1}}|A_r, 1< \tilde{\sigma}_r<2, 1<\sigma<3) \\
&\geq C.
\end{align*}
This implies the stated result by rescaling and adjusting the constants slightly.
\end{proof}

The final step is to connect $V_r$ to the ball of radius $r$ round $\pi(U)$ (recall that $V_r$ is the collection of subtrees grafted to $h$ in which the snake does not exit $[\xi_h-r/8, \xi_h+r/8]$).

\begin{lemma}\label{lem:Vr distance bound 2}
Almost surely under $\Noo_0( \cdot | A_r, B_r)$, it holds that $V_r \subset B_{\M}(\pi(U),r)$ for all sufficiently small $r$. The same is true almost everywhere under $\N_0( \cdot | A_r, B_r)$,
\end{lemma}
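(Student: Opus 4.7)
The goal is to show $D(\pi(U), \pi(v)) \leq r$ for every $v \in V_r$, using the chain of inequalities $D \leq \Da^* \leq \Da^\circ$ from \cref{prop:properties of D}(2). Writing $T'$ for the subtree in $V_r$ containing $v$ and $t_v \in [v_1^{T'}, v_2^{T'}]$ for its contour-time, I plan to exhibit a short chain $(U, t_h, t_v)$, where $t_h$ is a time with $p(t_h) = h$ chosen just before the contour enters $T'$, and bound each $\Da^\circ$ piece separately.

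Under $A_r \cap B_r$ together with the definition of $V_r$, four snake bounds are at my disposal: (i) $|\xi_h - \xi_0| \leq r/16$ since $h$ lies on the spine within $I_r$; (ii) $Z_v \in [\xi_h - r/8, \xi_h + r/8]$, and hence $Z_v \in [\xi_0 - 3r/16, \xi_0 + 3r/16]$; (iii) $Z$ stays in $[\xi_0 - r/16, \xi_0 + r/16]$ on all of $I_r$; and (iv) on any subtree grafted to a hub $h' \in I_r$ with $h' \neq h$, $Z$ stays in $[\xi_{h'} - r/16, \xi_{h'} + r/16] \subset [\xi_0 - r/8, \xi_0 + r/8]$.

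For the second piece, the lex interval $[t_h, t_v]$ is contained in the stretch of the contour spent in $T' \cup \{h\}$, on which $Z$ lies in $[\xi_h - r/8, \xi_h + r/8]$, giving $\Da^\circ(t_h, t_v) \leq (\xi_h + r/8) + (\xi_h + r/8) - 2(\xi_h - r/8) = r/2$. For the first piece, the lex interval from $U$ to $t_h$ goes along the spine from $p(U)$ up to $h$ together with all subtrees grafted on the right side of that spine. On the controlled portions (the spine itself and subtrees of hubs $h' \neq h$ in $I_r$), the snake stays in $[\xi_0 - r/8, \xi_0 + r/8]$. The remaining parts are subtrees grafted to $h$ on the right side that are traversed before $T'$ and do not lie in $V_r$; these are not controlled by $B_r$ and form the main obstacle. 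The plan to handle them is to upgrade the chain by inserting intermediate times at $h$ flanking each such bad subtree: because $Z = \xi_h$ at every such insertion time, these bypass-steps contribute only terms of order $r/16$ each, and, using the smallness of the non-distinguished half-hub mass guaranteed by $A_r$, the total can be summed to at most $r/2$.

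Combining the two contributions yields $\Da^*(U, t_v) \leq r$ and hence $D(\pi(U), \pi(v)) \leq r$, giving the desired inclusion $V_r \subset B_{\M}(\pi(U), r)$. The statement under $\N_0$ follows by restricting to $\{1 < \sigma < 2\}$ and applying the \Ito scaling property \eqref{eqn:Ito scaling}. The chief difficulty is making the bypass argument rigorous in the stable setting, where $h$ has infinitely many children and the contour-visits to $h$ form a dense Cantor-like subset of time rather than a discrete list; this should be handled either by approximating from discrete CVS chains and passing to the limit, or by invoking the H\"older continuity of $Z$ from \cref{lem:height holder} to control the snake on subtrees of small tree-diameter, valid for $r$ sufficiently small depending on the realisation.
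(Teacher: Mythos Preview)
Your core idea—use $D\le \Da^*\le \Da^\circ$ and a short chain through $h$—is exactly the paper's approach, but your execution of the first link contains a genuine gap, and the proposed fix does not work as written.

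The problem is the placement of $t_h$. By putting $t_h$ ``just before $T'$'' you force the interval $[U,t_h]$ to traverse every subtree grafted to $h$ (good or bad) that precedes $T'$ in contour order. Your bypass plan—insert flanking times $s_i,t_i$ at $h$ around each bad subtree—does not give steps of size $r/16$: on the time interval $[s_i,t_i]$ the snake enters the bad subtree and is uncontrolled, while the complementary interval goes around the whole tree. Your appeal to ``the smallness of the non-distinguished half-hub mass guaranteed by $A_r$'' is a non sequitur here, since the bad subtrees in question are grafted to $h$, the \emph{distinguished} hub, and $A_r$ says nothing about their number or size. Even if each good-run hop were $O(r)$, the number of bad subtrees at $h$ is Poisson with mean of order $\lambda$, so the sum would not be bounded by $r$.

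The paper avoids all of this with two observations you are missing. First, since $\Da^\circ$ and $\Da^*$ are defined on $\Ta$ (not on $[0,1]$), every inserted time mapping to $h$ is the \emph{same} tree point, so the chain collapses to $p(U),\,h,\,v$ and one only needs $\Da^\circ(p(U),h)$ and $\Da^\circ(h,v)$. Second, for each of these one is free to choose the more favourable lexicographic interval. For $\Da^\circ(p(U),h)$ take the interval from $U$ to the \emph{first} contour visit to $h$ after $U$: this meets only the spine and subtrees grafted at hubs strictly between $p(U)$ and $h$, all of which are controlled by $B_r$, giving $\Da^\circ(p(U),h)\le r/2$. For $\Da^\circ(h,v)$ the paper notes that the maximal run of \emph{consecutive} good subtrees containing $T'$ is coded by a single contour interval $[s,t]$ with $p(s)=p(t)=h$; one lex interval from $h$ to $v$ then lies entirely in $[s,t_v]$, on which the snake stays in $[\xi_h-r/8,\xi_h+r/8]$, giving $\Da^\circ(h,v)\le r/2$. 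The triangle inequality finishes.

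In particular, no discrete approximation and no appeal to H\"older continuity is needed; the worry about the Cantor-like set of visit times to $h$ disappears once you use the collapse of the chain and the freedom in the choice of lex interval.
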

\begin{proof}
We start with the claim under $\Noo_0$ (it then transfers to $\N_0$ using \eqref{eqn:Ito measure integrate s}) and wlog we work on a probability space where \eqref{eqn:subsequential limits} holds almost surely. 

On the event $A_r$, let $h$ be the large hub. Consider the subtrees grafted to $h$ on the right hand side of the path from $\rho=p(0)$ to $p(U)$. We call subtrees in which the snake exits the interval $[\xi_h-\frac{r}{8}, \xi_h+\frac{r}{8}]$ ``bad"; the rest are ``good'', see Figure~\ref{fig:unif LB bad subtrees}.

\begin{figure}[h]
\begin{subfigure}{0.5\textwidth}
\includegraphics[width=7cm]{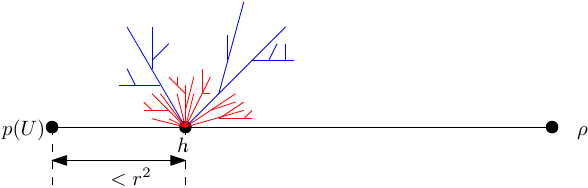}
\begin{minipage}{.1cm}
\vfill
\end{minipage}
\centering
\end{subfigure}\hfill
\begin{subfigure}{0.5\textwidth}
\includegraphics[width=7cm]{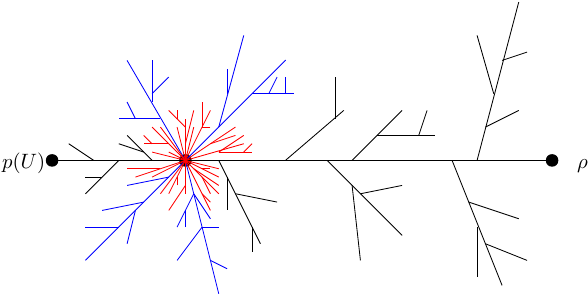}
\centering
\end{subfigure}\hfill
\caption{The setup around the hub $h$. Bad subtrees are in blue, and good subtrees are in red.}
\label{fig:unif LB bad subtrees}
\end{figure}

Hence $V_r$ consists precisely of the collection of good subtrees. Note that on the event $B_r$ this also implies that the snake does not exit $[\xi_0-3r/16, \xi_0+3r/16]$ in the good subtrees.

On the event $B_r$, the snake does not exit the interval $[Z_{\pi(U)}-\frac{r}{8}, Z_{\pi(U)}+\frac{r}{8}]$ in any of the subtrees grafted to the spine between $\pi(U)$ and $h$. It therefore follows from the fact that $D\leq\Da^{*}$ that $D(\pi(U), h) \leq \frac{r}{2}$.

In addition, each collection of consecutive good subtrees is coded by an interval $[s,t]$ for which $p(s)=p(t)=h$. It therefore similarly follows from the fact that $D\leq\Da^{*}$ that any $x$ in a good subtree satisfies $D(h,x) \leq \frac{r}{2}$. The claim therefore follows by the triangle inequality.
\end{proof}

This allows us to tie up as follows.

\begin{proof}[Proof of Proposition \ref{prop:lower vol tail bound}]
The result in (a) follows from multiplying the probabilities in \cref{lem:Ar prob LB}, \cref{cor:N Br Ar tie up normalized} and \cref{lem:Vr distance bound 2}. Similarly, one can prove using Corollary \ref{cor:N Br Ar tie up}(c) and Lemma \ref{lem:Vr distance bound 2} that there exists $c > 0$ such that for all $r>0, 1< \lambda < \infty$,
\begin{align*}
\N_0 \left( \nu (B(\pi(U),r)) \geq \lambda r^{\frac{2\alpha}{\alpha-1}} \right) &\geq c\lambda^{-(\alpha-1)}r^{-\frac{2}{\alpha-1}}.
\end{align*}
This is almost the same as the statement of part (b); to prove it as written, one in fact needs to refine the estimate of Lemma \ref{lem:vol set LB cond} slightly to consider a similar interval for the volumes there; this can be done straightforwardly by counting subtrees in the Poisson process more precisely using the same principles as in the rest of the proofs, so we have chosen to omit it.
\end{proof}

\begin{remark}\label{rmk:hub volume LB}
The same logic, combined with the result of Miermont \cite[Proposition 2]{miermont2005self} that for a hub $p(t)$
\begin{equation}
\lim_{r \downarrow 0} \frac{\Volt (B_{\Ta}(p(t), r))}{r} = \Delta_t,
\end{equation}
can be used to show that $\nu (B_{\M}(\pi(t),r))$ is typically at least of order $r^2$ when $p(t)$ is a hub of $\Ta$.
\end{remark}

\subsubsection{Proof of Corollary \ref{cor:log volume fluctuations sup LB}}

\begin{proof}[Proof of Corollary \ref{cor:log volume fluctuations sup LB}]
Again by Fubini's theorem it is sufficient to prove this for (the projection of) a uniform point $\pi(U)$. We set $r_n = 2^{-n}$ and $\lambda_n = (\log r_n^{-1})^{\frac{1-\epsilon}{\alpha-1}}$. By Proposition \ref{prop:uniform rerooting}, we can further assume that $\pi(U)$ is in fact the root $\rootm$ and we consider the spine from $\rootm$ to another independent uniform point as considered in Proposition \ref{prop:firstmomentrange ds}. Let $(\xi_s)_{s \geq 0}$ be the linear Brownian motion appearing there, and for each $n \geq 0$ set 
\[
\tau_n = \inf \{s \geq 0 : |\xi_s| \geq \epsilon r_n\}. 
\]
Note that it is indeed the case that $\tau_n<\infty$ for all sufficiently large $n$, $\N_0$-almost everywhere.

By Proposition \ref{prop:firstmomentrange ds}, the subtrees grafted to the right and left sides of the spine in the interval $[ \tau_{n+1}, \tau_n]$ are coded by a Poisson process and, conditionally on the sizes of the hubs on this part of the spine, the Poisson processes on the right and left hand sides are independent. We will consider two events, $A_n^l$ and $A_n^r$ on the left and right hand sides respectively. The event $A_n^l$ will guarantee that all vertices corresponding to time points in $[\tau_{n+1}, \tau_n]$ along the spine are within distance $r_n$ of the root. The event $A_n^r$ will ensure that there is a subtree on the right rooted at point in $[\tau_{n+1}, \tau_n]$ such that the volume of the ball of radius $r_n$ around the root in the subtree is at least $\lambda_n r_n^{\frac{2\alpha}{\alpha-1}}$. These two points combined will imply that the ball of radius $4r_n$ in the original tree has volume at least $\lambda_n r_n^{\frac{2\alpha}{\alpha-1}}$, so it is sufficient to show that $A_n^l \cap A_n^r$ occurs infinitely often, almost surely.

Let us fix $\delta \in (0,1)$. It will be helpful to consider the event
\[
B_n := \left\{ U^{(1)}_{\tau_n}-U^{(1)}_{\tau_{n+1}}, U^{(2)}_{\tau_n}-U^{(2)}_{\tau_{n+1}} \in [(\delta r_n)^{\frac{2}{\alpha-1}}, 2(\delta r_n)^{\frac{2}{\alpha-1}}] \right\},
\]
where $U^{(1)}$ and $U^{(2)}$ are as in Section \ref{sctn:spinal decomp}.

Note that the events $(B_n)_{n\geq 0}$ are independent, and moreover it follows by scaling invariance that there exists $c>0$ such that for all $n \geq 0$,
\[
P^0 \otimes \Pi_0 \left( B_n |\tau_{n} < \infty \right)>c.
\]

We then let $S^l_n$ denote the collection of subtrees that are grafted to the spine to the left of $[0,\tau_n]$, and let $S^r_n$ denote the collection of subtrees grafted to the right of $[\tau_{n+1}, \tau_n]$. We let (slightly abusing notation)
\[
A_n^l = \left\{ \sup_{x \in S_n^l} |Z_x| \leq r_n \right\}, \qquad A^r_n = \left\{ \exists T \in S^r_n : \nu (B(\rho_T, r_n) \cap T) \geq \lambda_n r_n^{\frac{2\alpha}{\alpha-1}} \right\},
\]
where here $B(\rho_T, r_n) \cap T$ denotes the part of the ball intersecting the tree $T$, measured with respect to the metric $D$ of $\M$.

We first consider a bound for the probability of $A_n^r$. On the event $B_n$ it follows from Proposition \ref{prop:lower vol tail bound}(b) that the number of such subtrees stochastically dominates a Poisson random variable with parameter $c_{\delta} \lambda_n^{-(\alpha-1)}$, and hence we deduce that there exists $c_{\delta}'>0$ such that, for all $n \geq 1$,
\[
\pr{ A_n^r |\tau_{n} < \infty,  B_n} \geq c_{\delta}' \lambda_n^{-(\alpha-1)}.
\]
Now note that, since the events $B_n \cap A_n^r$ refer to disjoint parts of the spine for distinct $n$, they form a sequence of independent events. Hence it follows from the second Borel-Cantelli lemma, the fact that $\tau_n<\infty$ for all sufficiently large $n$ $\N_0$-almost everywhere, our choice of $\lambda_n$ and the estimates above that $B_n \cap A_n^r$ occurs infinitely often, $\N_0$-almost everywhere.

We now suppose that this is indeed the case and condition on a subsequence $n_k$ such that $B_{n_k} \cap A_{n_k}^r$ occurs for all $k\geq 1$. We claim that there exists $c<\infty$ such that
\[
\N_0 (A_n^l | B_{n} \cap A_{n}^r, \tau_n < \infty) \geq 1-c\delta^{\frac{2}{\alpha-1}}.
\]
for all $n$. To see this, we apply Proposition \ref{prop:firstmomentrange ds} and \eqref{eq:rangesnake} which imply that, conditionally on $B_n$, the number of subtrees where the snake exits the interval $[-r_n,r_n]$ is stochastically dominated by a Poisson random variable with parameter $c\epsilon^{\frac{2}{\alpha-1}}$, for some $c<\infty$, and hence is non-zero with probability less than $c\epsilon^{\frac{2}{\alpha-1}}$. In particular, it therefore follows from Fatou's lemma that, conditionally on the subsequence $(n_k)_k$, 
\[
\N_0 ((A_{n_k}^l)^c \text{ for all } k \text{ large enough} | B_{n_k} \cap A_{n_k}^r \text{ i.o.}) \leq c\delta^{\frac{2}{\alpha-1}}.
\]
Moreover, as discussed above, on the event $B_n \cap A_n^l \cap A_n^r$ it holds that $\nu(B(\pi(U), 4r_n) \geq \lambda_n r_n^{\frac{2\alpha}{\alpha-1}}$. Since $B_{n_k} \cap A_{n_k}^r \text{ i.o.}$ $\N_0$-almost everywhere, we deduce that 
\[
\N_0 (\nu(B(\pi(U), 4r_{n_k}) \geq \lambda_{n_k} r_{n_k}^{\frac{2\alpha}{\alpha-1}} \text{ for all } k \text{ large enough}) \geq 1- c\delta^{\frac{2}{\alpha-1}}.
\]
Since $\delta>0$ was arbitrary, this readily implies the statement of the corollary.

\end{proof}

\section{Small volumes}\label{sctn:uniform points small}

We now turn to looking at the occurrence of unusually small volumes at typical points. As in Section \ref{sctn:UB on uniform balls}, we fix an $a > 0$ and consider the law of $M_{\epsilon}(U_a)$, where $M_{\epsilon}(s) := \epsilon^{-\frac{2\alpha}{\alpha-1}} \nu(B(s, \epsilon))$ and where $U_a$ is as in the statement of Proposition \ref{prop:upper vol tail bound lazy}.

\subsection{Upper tail bounds for small volumes}

\begin{proposition}\label{prop:lower vol tail bound lazy}
There exist $\kappa>0, C < \infty, c>0$ such that for all $r>0$, $\lambda >1$ and $a \in (r^{2}, \infty)$,
\[
\N_0 \left( \nu (B(U_a,r)) \leq \lambda^{-1} r^{\frac{2\alpha}{\alpha-1}} \right) \leq Ce^{-c\lambda^{\kappa}}.
\]
In particular this holds for $\kappa=\frac{\alpha-1}{32\alpha}$.
\end{proposition}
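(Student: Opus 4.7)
The plan is to prove this by a Laplace transform / Chernoff argument, exploiting a spinal decomposition at $U_a$ and reducing the estimate to a small-ball estimate for a product of two independent stretched-exponentially concentrated random variables. First, since $D\le D_\alpha^*$ by \cref{prop:properties of D}(2), it suffices to lower bound $\nu(B_{D_\alpha^*}(U_a,r))$. I will sample $(\Ta,Z,U_a)$ using \cref{prop:firstmomentdLs}: the spine from $\rho$ to $U_a$ has length $a$ and is coded by the subordinator pair $(J_a,\hat J_a)$ (giving the masses of the hubs on either side of the spine) and an independent standard Brownian motion $\xi$ with $\xi_0=0$ (so that $\xi_a=Z_{U_a}$). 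Conditionally on this spine, the subtrees grafted to it form an independent Poisson point process on $[0,a]\times \W$ with intensity $(dJ_a+d\hat J_a)(s)\otimes \N_{\xi_s}(dw)$, as in the spinal decomposition of \cref{prop:firstmomentrange ds}.

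Define $s^*:=0\vee \sup\{t\le a:|\xi_t-\xi_a|>r/8\}$, so that $|\xi_t-\xi_a|\le r/8$ on $[s^*,a]$, and set $L=a-s^*$. For any subtree grafted at $s\in[s^*,a]$ whose snake has range $\mathcal R\subseteq[\xi_a-r/4,\xi_a+r/4]$ and any vertex $x$ of that subtree, the lexicographical interval from $U_a$ to $x$ that climbs the spine from $U_a$ down to $s$ and then descends into the subtree has all snake values in $[\xi_a-r/4,\xi_a+r/4]$, so $D_\alpha^\circ(U_a,x)\le r$ and hence $x\in B_{D_\alpha^*}(U_a,r)$. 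Call such subtrees \emph{good}, let $V_r$ be their total $\sigma$-mass, and note that $\nu(B(U_a,r))\ge V_r$. Setting $\theta=\lambda r^{-2\alpha/(\alpha-1)}$, Markov's inequality and the Poisson master formula (\cref{prop:Poisson master formula}) applied conditionally on the spine give
\[
\N_0\!\left(\nu(B(U_a,r))\le \lambda^{-1}r^{\frac{2\alpha}{\alpha-1}}\right)\le e\cdot E\!\left[\exp\!\left(-\!\int_{s^*}^{a}\!\!d(J_a+\hat J_a)(s)\,\N_{\xi_s}\!\left(\mathbf 1_{\mathcal R\subseteq [\xi_a-r/4,\xi_a+r/4]}(1-e^{-\theta\sigma})\right)\right)\right].
\]
For $s\in[s^*,a]$ we have $|\xi_s-\xi_a|\le r/8$, and translation invariance, the scaling property~\eqref{eqn:Ito scaling}, \eqref{eqn:Laplace sigma}, together with the bound $\N_0(\mathcal R\not\subseteq[-1,1])=O(1)$ coming from~\eqref{eq:rangesnake}, give
\[
\N_{\xi_s}\!\left(\mathbf 1_{\mathcal R\subseteq [\xi_a-r/4,\xi_a+r/4]}(1-e^{-\theta\sigma})\right)\ge c_1\,r^{-\frac{2}{\alpha-1}}\,\lambda^{\frac{1}{\alpha}}
\]
for $\lambda$ large enough. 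Writing $Z=(J_a+\hat J_a)(a)-(J_a+\hat J_a)(s^*)$ for the hub mass on the good spine, this reduces the task to bounding $E[\exp(-c_1 r^{-2/(\alpha-1)}\lambda^{1/\alpha}Z)]$.

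By Brownian scaling, $L\ge r^2(T\wedge 1)$ with $T$ the exit time of a standard Brownian motion from $(-1/8,1/8)$ (the $\wedge 1$ absorbs the cap $L\le a$ under the hypothesis $a>r^2$). Since $J_a+\hat J_a$ is a positive $(\alpha-1)$-stable subordinator (Laplace exponent $\psi'$), conditionally on $L$ the increment $Z$ has the law of $L^{1/(\alpha-1)}Z_1$ for a one-sided $(\alpha-1)$-stable variable $Z_1$ independent of $T$, so after rescaling $r$ disappears and I am left with showing $E[\exp(-c_1\lambda^{1/\alpha}(T\wedge 1)^{1/(\alpha-1)}Z_1)]\le C\exp(-c\lambda^\kappa)$. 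The two factors both satisfy stretched-exponential small-ball estimates: $P(T\le\epsilon)\le e^{-c/\epsilon}$ by the reflection principle, and $P(Z_1\le\epsilon)\le\exp(-c\epsilon^{-(\alpha-1)/(2-\alpha)})$ by Tauberian (De Bruijn) inversion of $E[e^{-\mu Z_1}]=e^{-c\mu^{\alpha-1}}$. Splitting $P(T^{1/(\alpha-1)}Z_1\le\delta)\le P(T\le t)+P(Z_1\le \delta t^{-1/(\alpha-1)})$ and optimising over $t$ gives a bound of the form $\exp(-c\delta^{-(\alpha-1)/(3-\alpha)})$ for small $\delta$; a final balance in $E[e^{-\mu W}]\le P(W\le\delta)+e^{-\mu\delta}$ then produces $E[e^{-\mu W}]\le C\exp(-c\mu^{(\alpha-1)/2})$, and setting $\mu=c_1\lambda^{1/\alpha}$ yields a stretched-exponential bound with exponent $(\alpha-1)/(2\alpha)$. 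The worse constant $\kappa=(\alpha-1)/(32\alpha)$ advertised in the statement then absorbs losses from using cruder small-ball estimates uniformly in $\lambda>1$ and from the various multiplicative constants.

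The main conceptual obstacle is that a crude conditioning on ``$\xi$ stays close to $\xi_a$ on $[a-r^2,a]$'' only has constant probability, so it cannot by itself give stretched-exponential decay of the Laplace transform. The key is that the \emph{combination} of the Brownian exit-time small ball and the positive-stable subordinator small ball, which are independent, is stretched-exponentially concentrated; this is what ultimately drives the bound. A secondary point requiring care is uniformity in $a\in(r^2,\infty)$, which I handle via the $T\wedge 1$ truncation of the good-spine length.
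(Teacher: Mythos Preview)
Your Laplace transform approach is attractive, but there is a genuine geometric gap at the step where you claim $D_\alpha^\circ(U_a,x)\le r$ for $x$ in a good subtree. The lexicographical interval from $U_a$ to $x$ does \emph{not} merely ``climb the spine and descend into the subtree'': in the contour exploration it visits \emph{every} subtree grafted on the relevant side of the spine between $U_a$ and the grafting point $s$. If one of those is a bad subtree (range not contained in $[\xi_a-r/4,\xi_a+r/4]$), the snake exits the interval on that lexicographical path and your bound on $D_\alpha^\circ$ fails. Consequently $V_r\subseteq B_{D_\alpha^*}(U_a,r)$ is not established, and the Poisson--Laplace computation downstream bounds the wrong quantity.

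This is precisely the obstacle that drives the paper's more elaborate construction: one has to control the number of bad subtrees along the spine segment and then use a $D_\alpha^*$ chain (hopping through the roots of the bad subtrees) rather than a single $D_\alpha^\circ$ step. The paper achieves this by (i) shrinking the spine interval to length $r^2\lambda^{-2\gamma'}$ and forcing $\xi$ into a narrower window $[-r\lambda^{-\gamma},r\lambda^{-\gamma}]$, (ii) locating one large hub $h$ on that interval, (iii) bounding the total hub mass before $h$ by $r^{2/(\alpha-1)}\lambda^{\beta'-\beta}$ so that the expected number of bad subtrees is small, and (iv) bounding the number of bad subtrees by $\lambda^\gamma/8$ so that the chain estimate gives $D(U_a,h)\le r/2$. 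The volume then comes from good subtrees at the single hub $h$, where consecutive good subtrees \emph{are} coded by contiguous contour intervals.

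Your computation from step~7 onward is essentially correct and would give the better exponent $\kappa=(\alpha-1)/(2\alpha)$ if the geometric input held; the issue is not the probabilistic analysis but the inclusion $V_r\subseteq B(U_a,r)$. A direct repair along your lines would require restricting $V_r$ to good subtrees encountered before the first bad one on each side, but then the relevant hub mass is a randomly truncated subordinator increment (truncated at a time depending on the subordinator itself through the Poisson process of bad subtrees), and the clean independence structure you exploit breaks down.
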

\begin{proof}
\textbf{Outline. }We will construct an event $E_{r, \lambda}$ on which $ \nu (B(U_a,r)) \geq \lambda^{-1} r^{\frac{2\alpha}{\alpha-1}}$ and show that the required tail bound holds for $E_{r, \lambda}$. To this end, we take $\beta, \beta', \gamma, \gamma'$ to be positive constants that will be fixed later.

The setup and the argument is similar to that of the proof of Proposition \ref{prop:lower vol tail bound}. We consider the spine from $U_a$ to $\rho$, and the associated decomposition as in Proposition \ref{prop:firstmomentdLs}. For concreteness we say that the half of the spine coming \textit{after} $U_a$ in the contour order is the right hand side of the spine. We also let $I_r$ denote the interval of the spine falling within distance $r^2 \lambda^{-2\gamma'}$ of $U_a$. We also assume that the Brownian motion appearing in Proposition \ref{prop:firstmomentdLs} is indexed by its distance from $U_a$ rather than from $\rho$. In addition we assume that the values of $Z$ are all shifted so that $Z_{U_a}=0$.

\begin{figure}[h!]
\includegraphics[width=10cm]{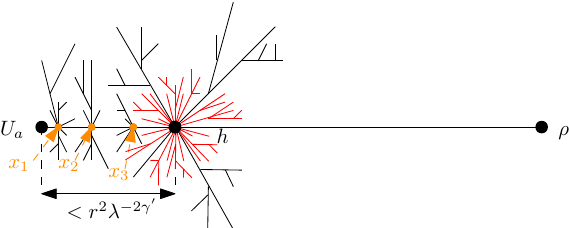}
\centering
\caption{The setup between $U_a$ and $h$ on the event $E_{r, \lambda}$. Between $U_a$ and $h$ the hubs are small and the snake has only small fluctuations. The roots of larger subtrees where the snake exits the interval $[-r \lambda^{-\gamma}, r \lambda^{-\gamma}]$ are labelled in orange.}\label{fig:Ua LB}
\end{figure}

We then let $E_{r, \lambda}$ be the intersection of the following events (see Figure~\ref{fig:Ua LB}):
\begin{enumerate}[(1)]
\item The Brownian motion $\xi$ does not exit the interval $ [-r \lambda^{-\gamma}, r \lambda^{-\gamma}]$ on the interval $I_r$.
\item There exists a hub of size at least $r^{\frac{2}{\alpha-1}} \lambda^{-\beta}$ on $I_r$. On this event we let $\thub$ be the time index of the first of these (i.e. closest to $U_a$).
\item The sum of the sizes of the hubs appearing on the interval $[0, \thub)$ is at most $r^{\frac{2}{\alpha-1}} \lambda^{\beta'-\beta}$.
\item The number of subtrees grafted to the spine in the interval $I_r$ in which the snake exits the interval $[-r \lambda^{-\gamma}, r \lambda^{-\gamma}]$ is at most $\frac{\lambda^{ \gamma}}{8}$.
\item The volume of the ball of radius $\frac{r}{2}$ centred at $h:=\pi (\thub)$ is at least $\lambda^{-1} r^{\frac{2\alpha}{\alpha-1}}$.
\end{enumerate}
\textbf{Construction. }We first explain why $ \nu (B(U_a,r)) \geq \lambda^{-1} r^{\frac{2\alpha}{\alpha-1}}$ on the event $E_{r, \lambda}$. The logic is very similar to that used to prove Proposition \ref{prop:lower vol tail bound}, and we will therefore omit some details. In particular, we let $N$ be the number of subtrees appearing in point (4) above, we order them by their (increasing) distance from the root, and for $i=1, \ldots, N$ we let $x_i$ be the root of the $i^{th}$ subtree. In addition we let $x_0 = U_a$ and $x_{N+1} = h$. Hence each $x_i$ is on the spine from $U_a$ to $\rho$, and by construction, it holds for each $i=0, \ldots, N$ that there exist $t_i, t_{i+1} \in [0, \sigma]$ such that $p(t_i)=x_i, p(t_{i+1})=x_{i+1}$, and $\sup_{r, s \in [t_i, t_{i+1}]} |Z_r-Z_s| \leq 4r\lambda^{-\gamma}$. Using that $D \leq \Da^*$ and combining with the triangle inequality therefore gives that 
\[
D(U_a, h) \leq \frac{\lambda^{\gamma}}{8} \cdot 4r\lambda^{-\gamma} = \frac{r}{2}.
\]
\textbf{Tail estimates. }We now bound the probabilities of the complements of each of the above events in turn. We will then deduce the result using a union bound.
\begin{enumerate}[(1)]
\item Using the Markov property of Brownian motion, we write
\begin{align*}
\pr{(\xi_t)_{0 \leq t \leq r^2\lambda^{-2\gamma'}} \not\subset [-r \lambda^{-\gamma}, r \lambda^{-\gamma}]} = \pr{(\xi_t)_{0 \leq t \leq 1} \not\subset [- \lambda^{\gamma'-\gamma}, \lambda^{\gamma'-\gamma}]} \leq Ce^{-c \lambda^{\gamma'-\gamma}}.
\end{align*}
\item By Proposition \ref{prop:firstmomentdLs}, the number of such hubs is lower bounded by a Poisson random variable with parameter $c \lambda^{\beta(\alpha-1)-2\gamma'}$, for an appropriate constant $c\in (0, \infty)$. Hence the probability that it is zero is at most $e^{-c \lambda^{\beta(\alpha-1)-2\gamma'}}$.
\item Let $S_{r^{\frac{2}{\alpha-1}} \lambda^{-\beta}}$ denote the sum in question. Then $S_{r^{\frac{2}{\alpha-1}} \lambda^{-\beta}}$ concerns the hubs that all occur before $h$, and which are therefore all conditioned to have size at most $r^{\frac{2}{\alpha-1}} \lambda^{-\beta}$. The number of such hubs moreover has a geometric distribution with parameter $cr^{2} \lambda^{-\beta(\alpha-1)}$, and by standard thinning properties of Poisson processes this geometric time is independent of the sizes of the hubs. In addition applying the memoryless property of the geometric distribution along with scaling invariance, it therefore follows that for any $k > 1$ (with the obvious extension of notation)
\begin{align*}
\pr{S_{r^{\frac{2}{\alpha-1}} \lambda^{-\beta}} > kr^{\frac{2}{\alpha-1}} \lambda^{-\beta}} \leq \pr{S_{r^{\frac{2}{\alpha-1}} \lambda^{-\beta}} > r^{\frac{2}{\alpha-1}} \lambda^{-\beta}}^{k/2} = \pr{S_{1} > 1}^{k/2},
\end{align*}
and so taking $k=\lambda^{\beta'}$ we deduce that the probability in question is bounded by $e^{-c\lambda^{\beta'}}$.
\item It will suffice to bound the probability of this event conditionally on the high probability event in (3), since we can exclude the alternative using the previous estimate. In this case the number of such subtrees is upper bounded by a Poisson random variable with parameter $\lambda^{\frac{2\gamma}{\alpha-1}+\beta'-\beta}$. Hence, by a Chernoff bound, the probability that it exceeds $\frac{\lambda^{\gamma}}{8}$ is upper bounded by $C\exp\{-c\lambda^{\gamma-\frac{2\gamma}{\alpha-1}-\beta'+\beta}\}$.
\item The final estimate is the most involved. We condition on the event in (2), and consider the collection of subtrees grafted to $h$ in which the snake does not exit $[\xi_h-\frac{r}{8}, \xi_h+\frac{r}{8}]$. Each of these subtrees are contained within $B(h, \frac{r}{2})$ since $D\leq \Da^*$ and since we can bound fluctuations of the snake on the interval coding the subtree. It will in fact suffice to consider only the subtrees with total volume in $[r^{\frac{2\alpha}{\alpha-1}} \lambda^{-1}, 2r^{\frac{2}{\alpha-1}} \lambda^{-1}]$. For a tree with total volume in this interval, the probability that the snake does not exit the interval $[\xi_h-\frac{r}{8}, \xi_h+\frac{r}{8}]$ is uniformly bounded below by a positive constant by scaling invariance and since $\lambda>1$. Hence the total number of trees satisfying both of these properties is Poisson with parameter $c\lambda^{1/\alpha-\beta}$, for some $c \in (0, \infty)$. Hence the probability that this number is zero is $e^{-c\lambda^{1/\alpha-\beta}}$.
\end{enumerate}
One can verify that it is possible to choose $\beta, \beta', \gamma, \gamma'>0$ so that all of the probabilities above decay as stretched exponentials in $\lambda$. One viable option (which is not optimal) is to take
\[
\beta = \frac{3}{4\alpha}, \beta'= \frac{1}{4\alpha}, \gamma=\frac{\alpha-1}{32\alpha}, \gamma'=\frac{\alpha-1}{32\alpha},
\]
which results in a final decay of $Ce^{-c\lambda^{\frac{\alpha-1}{32\alpha}}}$ via a union bound on the above events.
\end{proof}

Proposition \ref{prop:lower vol tail bound lazy} has the following consequences.

\begin{corollary}\label{cor:log volume fluctuations LB}
There exists $\kappa>0$ such that, $\N_0$-almost surely, it holds for $\nu$-almost every $x$ in $\M$ that
\[
\liminf_{r \downarrow 0} \frac{\nu (B(x,r))}{r^{\frac{2\alpha}{\alpha-1}}(\log \log r^{-1})^{-\kappa}} > 0.
\]
In particular this is true for $\kappa=\frac{32\alpha}{\alpha-1}$.
\end{corollary}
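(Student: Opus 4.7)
The plan is a first Borel--Cantelli argument along a geometric sequence of radii $r_k = e^{-k}$, exploiting the stretched exponential tail bound of Proposition~\ref{prop:lower vol tail bound lazy}. Because that tail decays like $\exp(-c\lambda^{\kappa_1})$ with $\kappa_1 = \frac{\alpha-1}{32\alpha}$, summability along $r_k$ is achieved by taking the deviation scale $\lambda_k$ of order $(\log k)^{1/\kappa_1}$, which is precisely the source of the $(\log\log r^{-1})^{-\kappa}$ correction with $\kappa = 1/\kappa_1 = \frac{32\alpha}{\alpha-1}$.

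First I would reduce to a typical point sampled at a fixed level. Fix $a > 0$. The disintegration $\Volt = \int_0^\infty \ell^a \, da$ from \eqref{eqn:local a def}, together with the fact that $\nu$ is the pushforward of $\Volt$ under the canonical projection $\pi$, lets one use Fubini to see that it is enough to show that, for each fixed $a > 0$, $\N_0 \otimes \ell^a$-almost every $(\omega, U_a)$ satisfies
\[
\liminf_{r \downarrow 0} \frac{\nu(B(\pi(U_a), r))}{r^{\frac{2\alpha}{\alpha-1}} (\log\log r^{-1})^{-\kappa}} > 0.
\]
Integration over $a$ transfers this to the claimed $\nu$-almost sure statement.

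Next, set $\lambda_k = (M \log k)^{1/\kappa_1}$ for a constant $M$ to be chosen large. For all $k$ large enough that $r_k^2 < a$, Proposition~\ref{prop:lower vol tail bound lazy} gives
\[
\N_0 \otimes \ell^a\!\left(\nu(B(\pi(U_a), r_k)) \leq \lambda_k^{-1} r_k^{\frac{2\alpha}{\alpha-1}}\right) \leq C e^{-c\lambda_k^{\kappa_1}} = C k^{-cM},
\]
which is summable once $M > 1/c$. The first Borel--Cantelli lemma then produces a random $k_0$ such that $\nu(B(\pi(U_a), r_k)) > \lambda_k^{-1} r_k^{\frac{2\alpha}{\alpha-1}}$ for every $k \geq k_0$, almost surely.

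Finally, for an arbitrary small $r > 0$ pick $k$ with $r \in [r_{k+1}, r_k]$. Monotonicity of $r \mapsto \nu(B(\pi(U_a), r))$ gives
\[
\nu(B(\pi(U_a), r)) \geq \nu(B(\pi(U_a), r_{k+1})) > \lambda_{k+1}^{-1} r_{k+1}^{\frac{2\alpha}{\alpha-1}}.
\]
Since $r_{k+1} \geq r/e$ and $\log(k+1)$ is comparable to $\log\log r^{-1}$ as $r \downarrow 0$, this lower bound is comparable to $r^{\frac{2\alpha}{\alpha-1}} (\log\log r^{-1})^{-\kappa}$ up to a positive multiplicative constant, yielding the desired positive liminf. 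All of the substantive probability estimates are already packaged in Proposition~\ref{prop:lower vol tail bound lazy}, so there is no genuine obstacle; the main point requiring any care is the Fubini/local-time reduction at the outset, and verifying that the resulting exponent $1/\kappa_1$ matches the announced value $\frac{32\alpha}{\alpha-1}$.
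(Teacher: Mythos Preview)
Your proposal is correct and follows essentially the same approach as the paper: reduce to a point $U_a$ sampled according to $\ell^a$ via the disintegration \eqref{eqn:local a def} and Fubini, apply the tail bound of Proposition~\ref{prop:lower vol tail bound lazy} along a geometric sequence of radii with $\lambda_k$ of order $(\log k)^{1/\kappa_1}$, invoke the first Borel--Cantelli lemma, and fill in intermediate radii by monotonicity. The paper uses $r_n=2^{-n}$ rather than $e^{-k}$, but this is cosmetic; your choice of $\lambda_k$ is in fact stated more carefully than the paper's shorthand.
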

\begin{proof}
This follows by the same proof as Corollary \ref{cor:log volume fluctuations UB}. In particular, we start by fixing an arbitrary $a>0$ and proving the result for $\ell^a$-almost every $x$; this then extends straightforwardly to $\nu$-almost every $x$ using \eqref{eqn:local a def}. By Fubini's theorem, it is sufficient to prove the claim for $B(U_a,r)$, where $U_a$ is chosen according to $\ell^a$. We then take $r_n = 2^{-n}$ and $\lambda_n = c' \log \log r_n^{-1}$ where $c'>2/c$ where $c$ is the constant appearing in the statement of Proposition \ref{prop:lower vol tail bound lazy}, and then apply Borel-Cantelli, monotonicity and \eqref{eqn:local a def}.
\end{proof}

We also note the following slight extension of Proposition \ref{prop:lower vol tail bound lazy}. In what follows, $\diam (\Ta)$ refers to the diameter of $\Ta$ measured with respect to the tree metric.

\begin{proposition}\label{rmk:vol to diam}
Let $U$ be \textsf{Uniform}($[0, \sigma]$) under $\N_0$. Then there exist $\kappa, c>0, C<\infty$ such that for any $a>r^2$,
\[
\N_0 \left( \nu (B(U,r)) \leq \lambda^{-1} r^{\frac{2\alpha}{\alpha-1}} \middle| \diam (\Ta) > a \right) \leq Ce^{-c\lambda^{\kappa}}.
\]
\end{proposition}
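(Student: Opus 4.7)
The strategy is to adapt the proof of Proposition~\ref{prop:lower vol tail bound lazy} by combining uniform re-rooting invariance with the observation that, under the diameter conditioning, the spine attached to $p(U)$ is automatically long enough for the five-step construction to operate.

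First, I apply uniform re-rooting invariance (Proposition~\ref{prop:uniform rerooting}) to the functional $F(s,W) := G(W)/\sigma(W)$, where
\begin{equation*}
G(W) := \mathbbm{1}\bigl\{\nu(B(\rho, r)) \leq \lambda^{-1} r^{\frac{2\alpha}{\alpha-1}}, \, \diam(\Ta) > a\bigr\}.
\end{equation*}
Since $\sigma$ and $\diam(\Ta)$ are invariant under re-rooting, and the re-rooted limit space is identified with the original via a root-shifting isomorphism that sends the new root to $\pi(s)$, this yields the identity
\begin{equation*}
\N_0\Bigl(\tfrac{1}{\sigma}\int_0^\sigma \mathbbm{1}\bigl\{\nu(B(\pi(s),r))\leq \lambda^{-1}r^{\frac{2\alpha}{\alpha-1}}\bigr\}\,ds\cdot\mathbbm{1}\{\diam > a\}\Bigr) = \N_0\Bigl(\nu(B(\rho,r))\leq \lambda^{-1}r^{\frac{2\alpha}{\alpha-1}},\,\diam > a\Bigr).
\end{equation*}
Dividing by $\N_0(\diam > a)$ reduces the proposition to bounding the conditional probability $\N_0(\nu(B(\rho,r)) \leq \lambda^{-1} r^{2\alpha/(\alpha-1)} \mid \diam(\Ta) > a)$.

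Second, by the triangle inequality, $\diam(\Ta) \leq 2\,\height(\Ta)$, so $\{\diam > a\} \subset \{\height > a/2\}$, and the local time $\ell^b$ has positive total mass for any $b \in (r^2, a/2)$; in particular we may take $b := a/2 > r^2/2$. Applying Proposition~\ref{prop:firstmomentdLs} at level $b$ exposes a spine of length $b$ from $\rho$ to a vertex sampled via $\ell^b$, together with the usual subordinator--Poisson-process description of the hubs and grafted subtrees. I now replay verbatim the five-step construction of the proof of Proposition~\ref{prop:lower vol tail bound lazy}, but on the interval $[0, r^2\lambda^{-2\gamma'}]$ of the spine \emph{closest to $\rho$} (rather than closest to the sampled vertex). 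Each of the five events — small Brownian fluctuations of $\xi$ near $\rho$, the existence of a hub of size $\geq r^{2/(\alpha-1)}\lambda^{-\beta}$, bounded contribution from other hubs, few exceptional subtrees, and a large ball around the large hub — carries an identical stretched-exponential tail in $\lambda$, so they combine to yield a spinal first-moment bound of the form $\N_0(\langle \ell^b, 1\rangle \cdot \mathbbm{1}\{\text{bad}\}) \leq Ce^{-c\lambda^{\kappa}}$, with $\kappa = \frac{\alpha-1}{32\alpha}$ as before.

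Third, one converts the spinal first-moment bound into a conditional probability bound via a Palm-type argument: $\N_0 \otimes \ell^b$ is absolutely continuous with respect to $\N_0(\cdot \mid \height > b)$ with density proportional to $\langle\ell^b, 1\rangle$, and a lower bound on $\langle\ell^b, 1\rangle$ that holds with high probability on $\{\height > b\}$ (available from standard CSBP scaling estimates, see \cite[Chapter 1]{LeGDuqMono}) combined with $\N_0(\diam > a) \asymp a^{-1/(\alpha-1)}$ allows the spinal first-moment bound to be transferred to a bound on $\N_0(\text{bad}, \diam>a)/\N_0(\diam > a)$. The main obstacle is this final Palm-type step: one must control $\langle\ell^b, 1\rangle$ from below with sufficiently high probability on $\{\diam > a\}$ so that the stretched-exponential factor $e^{-c\lambda^\kappa}$ is not destroyed by the normalization; this is essentially a scaling calculation for the stable CSBP, but requires careful accounting to preserve the exponent.
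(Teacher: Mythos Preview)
Your approach is essentially the same as the paper's: reduce to the root via uniform re-rooting, relate the diameter event to a height event, and replay the five-step construction of Proposition~\ref{prop:lower vol tail bound lazy} near $\rho$ instead of near $U_a$. The only structural difference is that you apply re-rooting first, whereas the paper first establishes the bound for $\rho$ conditional on $\{H>a\}$ (their display \eqref{eqn:remark inf height}) and applies re-rooting at the end; this reordering is immaterial.

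Where the two diverge is in the conversion from the spinal first-moment bound to a genuine conditional bound. What Proposition~\ref{prop:firstmomentdLs} actually delivers is a bound under the size-biased measure $\N_0\times\ell^b$, i.e.\ $\N_0\bigl(\langle\ell^b,1\rangle\,\mathbbm{1}\{\text{bad}\}\bigr)\leq Ce^{-c\lambda^\kappa}$; you correctly flag that turning this into $\N_0(\text{bad}\mid \diam>a)$ is the real issue. Your proposed fix --- a lower tail estimate on $\langle\ell^b,1\rangle$ conditional on $\{H>b\}$ followed by a splitting argument --- is plausible but laborious, and you do not carry it out. The paper instead takes a shorter route once \eqref{eqn:remark inf height} is in hand: it uses the two-sided sandwich $\{H>2a\}\subset\{\diam>2a\}\subset\{H>a\}$ together with the explicit tail ratio $N(H>a)/N(\diam>2a)\leq C$ (from \cite[Proposition~5.6]{goldschmidt2010behavior}) to pass from height-conditioning to diameter-conditioning in one line. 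This avoids any appeal to CSBP lower tails. That said, the paper is itself brief about how ``the same argument as in Proposition~\ref{prop:lower vol tail bound lazy}'' yields \eqref{eqn:remark inf height} --- the passage from the $\ell^a$-biased measure to the height-conditional one is glossed over there too --- so your explicit identification of this as the crux is well taken.

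A minor technical point: you take $b=a/2$ and note $b>r^2/2$, but the five-step construction needs the spine segment near $\rho$ to have length at least $r^2\lambda^{-2\gamma'}$; since the hypothesis only gives $a>r^2$, you should check that $a/2>r^2\lambda^{-2\gamma'}$, which holds for $\lambda$ large (and small $\lambda$ is handled by adjusting $C$).
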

\begin{proof}
We could have used the same argument as in Proposition \ref{prop:lower vol tail bound lazy} to obtain that there exists $\kappa, c>0, C<\infty$ such that for any $a>r^2$,
\begin{equation}\label{eqn:remark inf height}
\N_0 \left( \nu (B(\rho,r)) \leq \lambda^{-1} r^{\frac{2\alpha}{\alpha-1}} \middle| H > a \right) \leq Ce^{-c\lambda^{\kappa}}.
\end{equation}
(In particular, conditionally on $H>a$ we can consider the same decomposition to the point $U_a$ and instead construct the same event in the vicinity of $\rho$ rather than $U_a$.) Then note that, for all $a>0$,
\[
\{ H > 2a\} \subset \{\diam (\Ta) > 2a\} \subset \{ H > a\},
\]
and the tail estimate of \cite[Proposition 5.6]{goldschmidt2010behavior} shows that 
\[
\frac{ N(H > a)}{N(\diam (\Ta) > 2a)}
\]
is bounded above by a constant, uniformly in $a>0$. Hence it follows from \eqref{eqn:remark inf height} that we can modify the constant $C<\infty$ so that
\begin{equation*}
\N_0 \left( \nu (B(\rho,r)) \leq \lambda^{-1} r^{\frac{2\alpha}{\alpha-1}} \middle| \diam (\Ta) > 2a \right) \leq Ce^{-c\lambda^{\kappa}}.
\end{equation*}
for all $a > 0$. The claim then follows by uniform re-rooting invariance (Proposition \ref{prop:uniform rerooting}).
\end{proof}

\begin{corollary}\label{cor:liming global LB}
There exists $\kappa>0$ such that, $\Noo_0$-almost surely,
\begin{equation}\label{eqn:global inf in cor}
\liminf_{r \downarrow 0} \frac{\inf_{x \in M} \nu (B(x,r))}{r^{\frac{2\alpha}{\alpha-1}}(\log r^{-1})^{-\kappa}} > 0.
\end{equation}
In particular this is true for $\kappa=\frac{32\alpha}{\alpha-1}$.
\end{corollary}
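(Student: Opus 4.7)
The plan is to prove this almost-sure lower bound via Borel--Cantelli along the dyadic subsequence $r_n = 2^{-n}$, combining the tail estimate at a uniform point from \cref{rmk:vol to diam} with a covering argument based on Hölder regularity of the snake. By monotonicity of $r \mapsto \nu(B(x,r))$, it suffices to show that for $\kappa_0 = \tfrac{\alpha-1}{32\alpha}$ and a sufficiently large constant $C''$,
\[
\sum_{n \geq 1} \Noo_0\Bigl( \inf_{x \in \M} \nu(B(x, r_n)) < (C'')^{-1}\, r_n^{\frac{2\alpha}{\alpha-1}} (\log r_n^{-1})^{-1/\kappa_0} \Bigr) < \infty,
\]
whence \eqref{eqn:global inf in cor} follows with $\kappa = 1/\kappa_0 = \tfrac{32\alpha}{\alpha-1}$.

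First, I would fix an exponent $\eta \in (0, \tfrac{\alpha-1}{2\alpha})$; composing \cref{lem:height holder}(a) and (b) (the height is $\gamma$-Hölder in $s$ with $\gamma<1-1/\alpha$, while the snake tip is $b$-Hölder in the tree distance with $b<1/2$) yields that $\Wt$ is $\eta$-Hölder in $s$, and a one-line estimate bounding $\Da^{\circ}(s,t)$ by the oscillation of $Z$ on $[s \wedge t, s \vee t]$ then shows that $\Noo_0$-a.s.\ there exists a (random) finite $C_0$ with $D(\pi(s),\pi(t)) \leq \Da^{\circ}(s,t) \leq C_0 |s-t|^{\eta}$ for all $s,t \in [0,1]$. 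I would then restrict to the event $\mathcal{H}_K := \{C_0 \leq K\}$ for a large fixed $K$: since $\Noo_0(\mathcal{H}_K) \uparrow 1$ as $K \to \infty$, it is enough to establish the summability on $\mathcal{H}_K$.

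Next, for each $n$ I would introduce a random $r_n/4$-net of $\M$ by sampling $N_n := \lceil r_n^{-1/\eta}(\log r_n^{-1})^2 \rceil$ i.i.d.\ uniform variables $U_1^{(n)},\ldots,U_{N_n}^{(n)}$ on $[0,1]$ independent of the snake. The classical bound on the maximum spacing of uniform order statistics gives that, with probability $1 - \exp\{-c(\log r_n^{-1})^2\}$ (summable in $n$), every consecutive gap is at most $\delta_n := (4K)^{-1/\eta} r_n^{1/\eta}$, so by the Hölder bound above, the images $\pi(U_i^{(n)})$ form an $r_n/4$-net of $\M$ on $\mathcal{H}_K$. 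Setting $\lambda_n = C''(\log r_n^{-1})^{1/\kappa_0}$ and invoking the tail estimate of \cref{rmk:vol to diam} at each uniform point (see the next paragraph), a union bound yields
\[
\Noo_0\bigl(\exists\, i:\ \nu(B(\pi(U_i^{(n)}), 3r_n/4)) \leq \lambda_n^{-1} r_n^{\frac{2\alpha}{\alpha-1}}\bigr) \leq N_n\, C \exp\{-c \lambda_n^{\kappa_0}\} \leq C\, r_n^{c(C'')^{\kappa_0} - 1/\eta}(\log r_n^{-1})^2,
\]
which is summable for $C''$ large enough. Since any $x \in \M$ lies within $r_n/4$ of some $\pi(U_i^{(n)})$, one has $B(\pi(U_i^{(n)}), 3r_n/4) \subset B(x, r_n)$, and Borel--Cantelli concludes.

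The main technical obstacle is obtaining the $\Noo_0$-tail estimate $\Noo_0(\nu(B(\pi(U),r)) \leq \lambda^{-1} r^{\frac{2\alpha}{\alpha-1}}) \leq C\exp\{-c\lambda^{\kappa_0}\}$ for $U$ uniform on $[0,1]$, since \cref{rmk:vol to diam} is stated under $\N_0$ conditionally on $\{\diam(\Ta)>a\}$. This should be handled either by directly re-running the proof of \cref{prop:lower vol tail bound lazy} under $\Noo_0$ (the good event $E_{r,\lambda}$ constructed there is local around $U$ and survives the global normalisation $\sigma = 1$ as soon as $r^2 < 1$), or by using the disintegration $\N_0 = \int_0^\infty \N^{(s)}_0\, (\alpha\Gamma(1-\alpha^{-1})s^{1+1/\alpha})^{-1}\, ds$ together with the scaling relation \eqref{eqn:Ito scaling}, exploiting the fact that $\{\diam(\Ta) > r^2\}$ has probability bounded away from $0$ under $\N_0$ conditioned on $\sigma \in [1,2]$ for all sufficiently small $r$.
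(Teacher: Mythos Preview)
Your strategy is correct and matches the paper's: establish a polynomially-sized $r_n$-net via H\"older continuity of $\Wt$, apply the stretched-exponential tail bound of \cref{rmk:vol to diam} at each net point, take a union bound, and run Borel--Cantelli along $r_n=2^{-n}$. The differences are in implementation. First, the paper builds the net deterministically by taking a single uniform $U$ and translating it, $U_k=(U+kr^{\frac{2\alpha+2\epsilon}{\alpha-1}})\bmod\sigma$; this avoids your extra probabilistic step for the maximal spacing of i.i.d.\ uniforms while still making each $U_k$ uniform and hence amenable to \cref{rmk:vol to diam}. Second, and more substantially, the paper sidesteps your ``technical obstacle'' by never trying to get the tail estimate under $\Noo_0$: it works the entire argument under $\N_0(\,\cdot\cap\{\diam(\Ta)>1,\ K<\sigma<2K\})$, so that \cref{rmk:vol to diam} applies verbatim, and only at the very end transfers the almost-sure statement \eqref{eqn:global inf in cor} to $\Noo_0$ by It\^o scaling. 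This is exactly the spirit of your second proposed fix, but carried out at the level of the final a.s.\ statement rather than at the level of the one-point tail bound, which is cleaner. Your first fix (re-running \cref{prop:lower vol tail bound lazy} under $\Noo_0$) would also work but needs an additional argument that the local event $E_{r,\lambda}$ is essentially unaffected by conditioning on $\sigma=1$; the paper avoids this.
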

\begin{proof}
We will start by proving the result under $\N_0( \cdot \cap \{\diam (\Ta)>1, K < \sigma < 2K\})$ for an arbitrary $K>0$. Choose $\epsilon>0$ and $U \sim \textsf{Uniform}([0,\sigma])$ and for each $k \leq  \lceil r^{-\frac{2\alpha+2\epsilon}{\alpha-1}} \rceil$ set $U_k = (U+k r^{\frac{2\alpha+2\epsilon}{\alpha-1}}) \mod 1$. Note that it follows from \cref{lem:height holder} that for any $\epsilon>0$, the snake tip process $(\Wt_t)_{t \in [0,1]}$ is almost everywhere $\frac{\alpha-1}{2\alpha + \epsilon}$ H\"older continuous. In particular, this implies that for all sufficiently small $r>0$, we have for all $t \in [0,1]$ that 
\begin{equation}\label{eqn:Holder snake}
\sup_{q, s \in [t, t+r^{\frac{2\alpha+2\epsilon}{\alpha-1}}]} |\Wt_s - \Wt_q| \leq r.
\end{equation}
Combined with the fact that $D \leq \Da^*$, this implies that, $\N_0$-almost surely, the collection of balls around the points $(\pi (U_k))_{0 \leq k \leq 2K\lceil r^{-\frac{2\alpha+2\epsilon}{\alpha-1}}\rceil}$ form an $\frac{r}{2}$-cover of $M$ for all sufficiently small $r>0$. Then, by a union bound and Proposition \ref{rmk:vol to diam} (if $U_k + r^{\frac{2\alpha}{\alpha-1}} \lambda^{-1}< 1 $ and $U_{k+1} + r^{\frac{2\alpha}{\alpha-1}} \lambda^{-1}> 1$ we interpret the interval below as the union of the appropriate two intervals),
\begin{align*}
&\N_0 \left( \exists k : \nu(B(U_k,r)) < r^{\frac{2\alpha}{\alpha-1}} \lambda^{-1}, \diam (\Ta) >1, K < \sigma < 2K \right) \leq 2K\lceil r^{-\frac{2\alpha+2\epsilon}{\alpha-1}} \rceil Ce^{-c\lambda^{\frac{\alpha-1}{32\alpha}}}.
\end{align*}
Hence setting $r_n = 2^{-n}$ and $\lambda_n = c'(\log r_n^{-1})^{\frac{1}{\alpha-1}}$ for some sufficiently large $c'>0$, and then applying Borel-Cantelli as in the proof of Corollary \ref{cor:log volume fluctuations UB}, we obtain that almost everywhere, $\nu(B(U_k,\frac{r_n}{2})) \geq r_n^{\frac{2\alpha}{\alpha-1}} \lambda_n^{-1}$ for all $k$ and all sufficiently large $n$. In addition, on the event that the points $(\pi (U_k))_{k}$ form an $\frac{r_n}{2}$-cover of $M$, we have that 
\[
\inf_{u \in M} \nu(B(u,r_n)) \geq \inf_k \nu(B(U_k,\frac{r_n}{2})) \geq r_n^{\frac{2\alpha}{\alpha-1}} \lambda_n^{-1}.
\]
This concludes the proof along the subsequence $r_n$ since both of the required events hold for all sufficiently large $n$, $\N_0 (\cdot \cap \{ \diam (\Ta) >1, K < \sigma < 2K\})$ almost surely. The result extends to the limit $r \downarrow 0$ using monotonicity, implying that \eqref{eqn:global inf in cor} holds under the conditioning $\N_0( \cdot \cap \{\diam (\Ta)>1, K < \sigma < 2K\})$.

The result then extends straightforwardly to $\Noo_0$ using the \Ito scaling property.
\end{proof}

\subsection{Lower tail bounds for small volumes}

We now prove some bounds in the other direction.

\begin{lemma}\label{lem:liminf local sup}
For $\nu$-almost every $x \in M$:
\[
\liminf_{r \downarrow 0} \frac{\nu (B(x,r))}{r^{\frac{2\alpha}{\alpha-1}}(\log \log r^{-1})^{-\frac{1}{\alpha-1}}} < \infty.
\]
\end{lemma}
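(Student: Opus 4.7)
The plan is to reduce the claim, via Fubini's theorem and uniform re-rooting invariance (\Cref{prop:uniform rerooting}), to showing that $\liminf_{r \downarrow 0} \nu(B(\rho,r))/g(r) < \infty$ almost surely under $\Noo_0$, where $\rho = \pi(0)$ denotes the root of $\M$ and $g(r) := r^{\frac{2\alpha}{\alpha-1}}(\log\log r^{-1})^{-\frac{1}{\alpha-1}}$. The starting point is the lower bound $D(\rho, p(t)) \geq \Wt_t - 2\min_{s \in [0,H_t]} W_t(s)$ coming from \Cref{lem:D snake upper bound} (with $Z_\rho = 0$), which gives the containment $B(\rho,r) \subset \pi(V_r)$ where
\[
V_r := \bigl\{t \in [0,1] : \Wt_t - 2\min_{s \in [0,H_t]} W_t(s) \leq r\bigr\}.
\]
It therefore suffices to upper bound $\mathrm{Leb}(V_r)$ along a subsequence $r_n \downarrow 0$.

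Next, I would decompose $V_r = V_r^{\mathrm{sh}} \cup V_r^{\mathrm{dp}}$ according to whether $H_t \leq Kr^2$ or $H_t > Kr^2$, for a suitable constant $K$. The shallow part satisfies $\mathrm{Leb}(V_r^{\mathrm{sh}}) \leq \mathrm{Leb}\{t : H_t \leq Kr^2\} = \Volt(B_{\Ta}(\rho, Kr^2))$, so Duquesne's liminf-type result for volumes at typical points of stable trees (\cite[Theorem 1.1]{duquesne2012exact}), applied at $\rho$ (which is $\Volt$-typical after re-rooting), yields a random sequence $a_n \downarrow 0$ along which $\Volt(B_{\Ta}(\rho, a_n)) = O\bigl(a_n^{\alpha/(\alpha-1)} (\log\log a_n^{-1})^{-1/(\alpha-1)}\bigr)$. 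Setting $r_n := \sqrt{a_n/K}$ then gives $\mathrm{Leb}(V_{r_n}^{\mathrm{sh}}) = O(g(r_n))$ along this subsequence.

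For the deep part, the plan is to use the Brownian estimate $\Pi_0(\xi_a - 2\min_{[0,a]} \xi \leq r) \lesssim (r/\sqrt{a})^3$ (which follows from the explicit joint density of $(\xi_a, \min_{[0,a]}\xi)$) together with \Cref{prop:firstmomentdLs} and the \Ito scaling property \eqref{eqn:Ito scaling} to show that $\mathbb{E}_{\Noo_0}\bigl[\mathrm{Leb}(V_r^{\mathrm{dp}})\bigr] \lesssim K^{-1/2} r^{2\alpha/(\alpha-1)}$. The Poissonian independence of subtrees grafted to distinct levels of the spine provided by \Cref{prop:firstmomentrange ds}, combined with scale invariance of the snake, should then make deep-part contributions at well-separated scales essentially independent of shallow data, opening the door to a second Borel--Cantelli argument along the subsequence $r_n$.

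The main obstacle is ensuring compatibility between the \emph{random} subsequence $\{a_n\}$ produced by the tree liminf (which depends on the snake structure near the root) and the high-probability deep-part event at the corresponding scales $r_n = \sqrt{a_n/K}$. The idea is to condition on the shallow data (the snake restricted to tree heights $\leq K r_n^2$) and exploit the fact that the deep-part event is driven by independent subtree excursions grafted above that level, so that the resulting conditional probabilities are amenable to a Borel--Cantelli-type argument. The constant $K$ (or a slowly growing sequence $K_n$) must be tuned so that the polynomial factor $K^{\alpha/(\alpha-1)}$ introduced in the shallow bound does not swamp the $K^{-1/2}$ decay driving the deep-part concentration; since we only need $\liminf<\infty$, some slack is available here.
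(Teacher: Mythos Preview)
Your deep-part estimate is where the approach breaks. The claimed bound $\mathbb{E}_{\Noo_0}[\mathrm{Leb}(V_r^{\mathrm{dp}})] \lesssim K^{-1/2} r^{2\alpha/(\alpha-1)}$ is false: under $\Noo_0$, for a uniform time $t$ the height $H_t$ is of order one, and conditionally on $H_t = a$ the path $W_t$ is a Brownian motion on $[0,a]$, so your cubic estimate gives a bulk contribution of order $r^3$ to the expectation. Since $2\alpha/(\alpha-1) > 4 > 3$, this is \emph{much larger} than $g(r)$, and no first-moment or Markov argument can make the deep part negligible at the required scale. The containment $B(\rho,r) \subset \pi(V_r)$ is simply too loose on the deep side: there are many points with $H_t$ of order one whose snake trajectory happens to stay near zero, and these are counted in $V_r^{\mathrm{dp}}$ even though most of them are not actually in $B(\rho,r)$.

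The paper's argument avoids the shallow/deep split entirely. It does not re-root: it works with a uniform point $U$ and the spine from the original root to $U$, letting $\xi$ denote the snake along this spine (indexed by tree distance from $U$, shifted so $\xi_0 = 0$). By \Cref{lem:D snake upper bound}, every $v \in B(U,r)$ must lie in a subtree grafted to the spine before $\xi$ first reaches $-r$, so on the event $\{\tau_{-r} < K r^2\}$ one bounds $\nu(B(U,r))$ by a tree-only quantity at scale $K r^2$. The decisive simplification you are missing is that $\xi$ is \emph{conditionally independent of the tree structure}: the random subsequence $r_n$ from Duquesne's liminf result is tree-measurable, so each event $\{\tau_{-r_n} < K r_n^2\}$ still has (conditional) probability at least $1-\delta$ by Brownian scaling, and a single application of Fatou's lemma---not Borel--Cantelli---gives that these events occur infinitely often with probability at least $1-\delta$. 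This completely sidesteps the compatibility obstacle you flagged.
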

\begin{proof}
Again by Fubini's theorem it is sufficient to prove the claim for $U$ chosen according to $\nu$. It was proved in \cite[Theorem 1.2 and Comment 1.1(c)]{duquesne2012exact} that for $\N_0$-almost every realisation of the tree $\Ta$ and $\nu$-almost every $x \in \Ta$:
\[
\liminf_{r \downarrow 0} \frac{\Volt (B_{\Ta}(U,r))}{r^{\frac{\alpha}{\alpha-1}}(\log \log r^{-1})^{-\frac{1}{\alpha-1}}} = \alpha-1.
\]
In particular, letting $A_r$ denote the event that
\[
\frac{\Volt (B_{\Ta}(U,r))}{r^{\frac{\alpha}{\alpha-1}}(\log \log r^{-1})^{-\frac{1}{\alpha-1}}} \leq 2C,
\]
we know that $A_r$ occurs infinitely often almost surely.

We use the same strategy for the volume upper bound as in the proof of Proposition \ref{prop:upper vol tail bound lazy}, and use the same notation. Take some $\delta>0$, and choose $K<\infty$ such that $\pr{{\tau}_r < Kr^2} = \pr{{\tau}_1 < K} >1- \delta$.

Now choose a subsequence $r_n \to 0$ such that $A_{Kr_n^2}$ occurs for all $n$. Since the Brownian motion on the spine is independent of the tree structure, we have that 
\[
\pr{{\tau}_{r_n} < Kr_n^2} \geq 1- \delta
\]
for all $n$. Hence by Fatou's lemma, it follows that, conditionally on the tree structure and our sequence $(r_n)_{n \geq 1}$, 
\[
\pr{{\tau}_{r_n} < Kr_n^2 \text{ i.o.}} \geq 1- \delta.
\]
Now note that, by Lemma \ref{lem:D snake upper bound}, we have (for an appropriate choice of $C_K<\infty$)
\[
\nu (B(U, r_n)) \leq \Volt (B_{\Ta}(U, Kr_n^2)) \leq C_K r_n^{\frac{2\alpha}{\alpha-1}}(\log \log r_n^{-1})^{-\frac{1}{\alpha-1}}.
\]
on the event $\{{\tau}_{r_n} < Kr_n^2\} \cap A_{Kr_n^2}$. Hence we showed that the statement of the lemma holds with probability at least $1-\delta$, and hence with probability $1$ since $\delta>0$ was arbitrary.
\end{proof}

Theorem \ref{thm:liminf fluctuations intro} now follows from Corollary \ref{cor:log volume fluctuations LB}, Corollary \ref{cor:liming global LB} and Lemma \ref{lem:liminf local sup}.

\section{Hausdorff dimension}\label{sctn:Hausdorff dim}

\subsection{Hausdorff dimension of $(\M, D)$}
In this section we identify the Hausdorff dimension for any subsequential limit of $(\Mn, n^{-\frac{2\alpha}{\alpha-1}}\dmn)$. We refer to \cite[Section 4]{mattila1999geometry} for definitions and background on Hausdorff dimension. The main result follows from combining Theorem \ref{thm:limsup fluctuations intro} with \cite[Lemma 2.1]{Duquesne2010packing} (the latter was in fact written for $\Ta$, but equally applies to $\M$).

\begin{theorem}
\sloppy $\Noo_0$-almost surely, the Hausdorff dimension of any subsequential limit of $(\Mn, n^{-\frac{2\alpha}{\alpha-1}}\dmn)_{n \geq 1}$ is equal to $\frac{2\alpha}{\alpha-1}$.
\end{theorem}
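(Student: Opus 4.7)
The approach is to read off the Hausdorff dimension directly from the volume asymptotics already established in Theorem \ref{thm:limsup fluctuations intro}, via the standard mass-distribution / density comparison result \cite[Lemma 2.1]{Duquesne2010packing}. That lemma supplies exactly the two conversions needed: for a finite Borel measure $\mu$ on a compact metric space and $s > 0$, (a) if $\limsup_{r \downarrow 0} \mu(B(x,r))/r^s = 0$ for $\mu$-a.e.\ $x$ in a Borel set $A$ with $\mu(A) > 0$, then $\mathcal H^s(A) = \infty$; and (b) if $\limsup_{r \downarrow 0} \mu(B(x,r))/r^s = \infty$ for $\mu$-a.e.\ $x$, then $\mathcal H^s$ of the whole space is finite. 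Applied to $\mu = \nu$ on the subsequential limit $(\M, D, \nu, \rho)$, these two statements will immediately pinch $\dim_H(\M)$ to the desired value $d := \tfrac{2\alpha}{\alpha - 1}$.

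For the lower bound $\dim_H(\M) \geq d$, I would fix any $s < d$, set $\delta = d - s > 0$, and pick $\epsilon > 0$ arbitrary. The first assertion of Theorem \ref{thm:limsup fluctuations intro} says that $\nu(B(x,r)) = o\bigl(r^d(\log r^{-1})^{(2\alpha+\epsilon)/(\alpha-1)}\bigr)$ as $r \downarrow 0$ for $\nu$-a.e.\ $x$. Since the polynomial factor $r^{\delta}$ swallows any power of $\log r^{-1}$ in the limit, this upgrades to $\limsup_{r \downarrow 0} \nu(B(x,r))/r^s = 0$ $\nu$-a.e. Part (a) of the density lemma, applied with $A = \M$ and using $\nu(\M) > 0$, then yields $\mathcal H^s(\M) = \infty$, so $\dim_H(\M) \geq s$; letting $s \uparrow d$ finishes this half.

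The upper bound $\dim_H(\M) \leq d$ is symmetric: fix $s > d$, set $\delta = s - d > 0$, and pick $\epsilon \in (0,1)$. The second assertion of Theorem \ref{thm:limsup fluctuations intro} gives $\limsup_{r \downarrow 0} \nu(B(x,r))/\bigl[r^d(\log r^{-1})^{(1-\epsilon)/(\alpha-1)}\bigr] = \infty$ $\nu$-a.e. Dividing the denominator additionally by the vanishing factor $r^{\delta}$ only enlarges the $\limsup$ (because $r^{-\delta} \to \infty$ while the logarithmic correction is of lower order), so $\limsup_{r \downarrow 0} \nu(B(x,r))/r^s = \infty$ $\nu$-a.e. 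Part (b) of the density lemma then gives $\mathcal H^s(\M) < \infty$, whence $\dim_H(\M) \leq s$, and sending $s \downarrow d$ closes the gap.

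Essentially no obstacle remains at this stage: the genuine work is already encoded in Theorem \ref{thm:limsup fluctuations intro}, and the deduction here is purely density-theoretic. The one point worth a sentence of verification when writing the proof cleanly is that \cite[Lemma 2.1]{Duquesne2010packing}, though stated and proved in the context of stable trees, is in fact a general Billingsley-type mass-distribution statement for any compact metric measure space, so applies verbatim to $(\M, D, \nu)$ once tightness along the chosen subsequence has placed this quadruple in $\mathbb{F}^c$.
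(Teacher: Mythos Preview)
Your proposal is correct and follows essentially the same route as the paper, which simply states that the result follows from combining Theorem \ref{thm:limsup fluctuations intro} with \cite[Lemma 2.1]{Duquesne2010packing} (and notes, as you do, that the latter applies verbatim to $(\M,D,\nu)$ although stated for stable trees). You have merely spelled out the two density-lemma implications in detail.
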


\subsection{Hausdorff dimension of $\left(\Ma , \Da^*\right)$}

Recall the definition of the metric space $\left(\Ma , \Da^*\right)$ from~\eqref{eq:def-D*} and the subsequent paragraph. Almost surely, it has the same Hausdorff dimension as any subsequential limit of $(\Mn, n^{-\frac{2\alpha}{\alpha-1}}\dmn)$:

\begin{theorem}
\sloppy Almost surely, the Hausdorff dimension of $\left(\Ma , \Da^*\right)$ is equal to $\frac{2\alpha}{\alpha-1}$.
\end{theorem}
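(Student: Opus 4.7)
The plan is to repeat the argument of the preceding theorem once we establish that both conclusions of \cref{thm:limsup fluctuations intro} transfer to $(\Ma, \Da^*, \nu_\alpha)$. The first result (an upper bound on typical volumes) transfers immediately: from $D \leq \Da^*$ one has $B_{\Da^*}(x,r) \subseteq B_D(x,r)$, so $\nu_\alpha(B_{\Da^*}(x,r)) \leq \nu(B_D(x,r))$ and the limsup statement carries over with the same constants.

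The second result (the $+\infty$-limsup bound) requires a closer look, since replacing $D$ by the larger metric $\Da^*$ shrinks balls and the bound cannot come from a soft pointwise comparison. Here I would revisit the proof of \cref{prop:lower vol tail bound} and, in particular, \cref{lem:Vr distance bound 2}. The key inclusion $V_r \subseteq B_D(\pi(U), r)$ is proved there by controlling oscillations of the snake on appropriate lexicographical intervals and invoking $D \leq \Da^* \leq \Da^{\circ}$; since those estimates are really bounds on $\Da^{\circ}$, they automatically yield $\Da^*(\pi(U), h) \leq r/2$ and $\Da^*(h, x) \leq r/2$ for every $x$ in a good subtree. Hence $V_r \subseteq B_{\Da^*}(\pi(U), r)$, and the remainder of the proof of \cref{prop:lower vol tail bound}, together with the deduction of \cref{cor:log volume fluctuations sup LB}, carries over verbatim. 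I expect this verification step to be the main obstacle, but it is essentially mechanical once one rereads the argument with $\Da^*$ in mind.

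With the analogue of \cref{thm:limsup fluctuations intro} in hand for $(\Ma, \Da^*, \nu_\alpha)$, I would conclude by invoking \cite[Lemma 2.1]{Duquesne2010packing} exactly as in the proof of the previous theorem, which converts the logarithmic volume fluctuations around $\nu_\alpha$-typical points into the equality $\dim_H(\Ma, \Da^*) = \frac{2\alpha}{\alpha-1}$ almost surely.
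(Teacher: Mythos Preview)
Your proposal is correct. For the lower bound you match the paper exactly: the first half of \cref{thm:limsup fluctuations intro} transfers to $\Da^*$ via $D\le\Da^*$, and one concludes with \cite[Lemma 2.1]{Duquesne2010packing}.

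For the upper bound, however, the paper takes a different and more direct route. Rather than re-inspecting the proof of \cref{prop:lower vol tail bound} (and \cref{cor:log volume fluctuations sup LB}) to upgrade the inclusion $V_r\subset B_D(\pi(U),r)$ to $V_r\subset B_{\Da^*}(\pi(U),r)$, the paper simply builds an explicit cover using the H\"older regularity of the snake tip (\cref{lem:height holder}): since $(\Wt_t)_{t\in[0,1]}$ is almost surely $\frac{1}{d+\epsilon}$-H\"older with $d=\frac{2\alpha}{\alpha-1}$, the oscillation of $\Wt$ on any interval of length $r^{d+2\epsilon}$ is at most $r$, and hence the images under $\pi$ of the points $\{ir^{d+2\epsilon}:0\le i\le r^{-(d+2\epsilon)}\}$ give an $r$-cover of $\Ma$ of cardinality $r^{-(d+2\epsilon)}$. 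This immediately yields $\dim_H(\Ma,\Da^*)\le d$.

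Your approach has the virtue of being uniform with the argument for $(\M,D)$ and of recycling the machinery already developed in \cref{sctn:uniform points large}; the paper itself remarks (just after \cref{thm:limsup fluctuations intro}) that the second conclusion does indeed hold for $\Da^*$ by inspection, so your verification step is sound. The paper's covering argument, on the other hand, is self-contained and avoids revisiting the somewhat delicate Poisson-process constructions of \cref{lem:Ar prob LB}--\cref{lem:Vr distance bound 2}; it also makes transparent that the upper bound really only needs the H\"older exponent of the snake, not the finer volume estimates.
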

\begin{proof}
In the whole proof, we set $d = \frac{2 \alpha}{\alpha-1}$.
\paragraph{Lower bound.}
It follows from \cite[Lemma 2.1]{Duquesne2010packing} (or more precisely, a version for $M$, but this follows by the same logic) that a sufficient condition for the Hausdorff dimension to be lower bounded by $d$ is that for every $\epsilon>0$, it holds for almost every $x$ in $\Ma$ that
\[
\limsup_{r \downarrow 0} \frac{\nu_{\alpha} (B(x,r))}{r^{d-\epsilon}} = 0.
\]
In this case, this is a straightforward consequence of Theorem \ref{thm:limsup fluctuations intro} and the fact that $D \leq \Da^*$.
\paragraph{Upper bound.}
To establish an upper bound of $d$ we need to show that for every $\epsilon>0$ we can construct a cover of $\Ma$ using balls of radius $r$ that is of size at most $r^{-(d+ \epsilon)}$ (for all sufficiently smaller $r$). For this, note it follows from \cref{lem:height holder} that for any $\epsilon>0$, the snake tip process $(\Wt_t)_{t \in [0,1]}$ is almost surely $\frac{1}{d+\epsilon}$ H\"older continuous. In particular, this implies that for all sufficiently small $r>0$, we have for all $t \in [0,1]$ that 
\begin{equation}\label{eqn:Holder snake}
\sup_{q, s \in [t, t+r^{d+2\epsilon}]} |\Wt_s - \Wt_q| \leq r.
\end{equation}
In particular, we deduce that the collection of balls around the points $(\pi (ir^{d+2\epsilon}))_{0 \leq i \leq r^{-(d+2\epsilon)}}$ form an $r$-cover of $\Ma$, as required.
\end{proof}

\addcontentsline{toc}{section}{Table of notation}
\section*{Table of notation}\label{sec:notations}

\subsection*{Finite Trees}
\vspace{-1em}
\begin{longtable}{p{3.5cm}p{12cm}}
$\mathbf{T}$ & the set of rooted plane trees \\
$\mathbf{T}_n$ & the set of rooted plane trees with $n$  edges \\
$\mu_\alpha$ & critical offspring distribution satisfying $\mu_\alpha ([x, \infty)) \sim cx^{-\alpha}$ \\
$Y$ & centred random variable on $\{-1,0,+1\}$\\
$\Tn$ & BGW tree with offspring distribution $\mu_\alpha$ conditioned to have $n$ edges \\
$u_0, \ldots, u_n$ & lexicographical ordering \\
$p$, $p_{\tf}$ & projection for a tree $\tf$ \\
$C_{\tf}, H_{\tf}, X_{\tf}$ &  contour exploration, height function, and \L ukasiewicz path of a tree $\tf$ \\
$c_{\tf}(u)$ & number of children of the vertex $u$ in the tree $\tf$ \\
$({\tf}, \ell)$ and $(\mathfrak{q}, v_*)$ & admissible labelled tree and a quadrangulation associated to it via the CVS bijection, with its distinguished vertex \\ 
$\mathbb{T}_n$ &  the set of admissible labelled trees with $n$ edges\\
$\Phi$ & the (reverse) pointed CVS bijection from $\mathbb{T}_n\times\{-1,+1\}$ to $\mathcal{Q}_n^{\bullet}$\\
 $(\Tn, \ell_n, \epsilon_n)$ &  BGW tree conditioned to have $n$ edges, equipped with labels with increments given by $Y$, and a uniform variable in $\{\pm 1\}$  \\
\end{longtable}

\subsection*{Finite Maps}
\vspace{-1em}
\begin{longtable}{p{3cm}p{12cm}}
$\mathcal{Q}_n$, $\mathcal{Q}_n^{\bullet}$ & the set of rooted (resp. pointed and rooted) planar quadrangulations with $n$ faces\\
$\left(\Mn ,  \dmn \right) $ &  random quadrangulation with $n$ faces, equipped with graph distance; image of $(\Tn, \ell_n, \epsilon_n)$ under CVS bijection\\
$\partial_n$   &  pointed vertex of $\Mn$ added by the CVS bijection\\
$\pi_n$     & canonical projection from $\{0, 1, \dots, n\}$ to $V(\Tn)=V(\Mn)\setminus\{\partial_n\}$\\
 $\rho_n$     & root vertex of $\Mn$ \\
  $\nu_n$     & uniform probability measure on $V(\Mn)$\\
$\dmn^{\circ}$ & pseudo-distance defined on $V(\Tn)$ and $\{0, 1,\dots, n\}$, using $\ell_n$
\end{longtable}

\subsection*{Stable Trees}
\vspace{-1em}
\begin{longtable}{p{3.5cm}p{12cm}}
$(\Ta, \dt)$ & stable tree with distance $\dt$  \\
$X$ & spectrally positive stable Lévy process with index $\alpha$\\
$\Delta_s$ & Equal to $\X_s - \X_{s^-}$, the size of the jump at time $s$ \\
$\psi$ & branching mechanism \\
$C_{\alpha}, \pi$ & opposite of drift coefficient and jump measure of $X$\\
$I_t, I_{s,t}$ & running infimum of $X$, infimum of $X$ on $[s,t]$ \\
$\X$ & normalised excursion of $X$ above its infimum at time 1\\
$N$ & \Ito excursion measure for excursions (and trees) \\
$H$ & height process\\
$(\rho_t, {\hat{\rho}}_t)_{t\geq 0}$ & exploration process and dual exploration process \\
$m_{s,t}$ & infimum of the height function between $s\wedge t$ and $s \vee t$ \\
$\Volt$ & volume measure on $\Ta$\\
$p$ & canonical projection $[0,1] \to \Ta$\\
$[[u,v]]$ & unique geodesic from $u$ to $v$ in $\Ta$\\
$B_{\Ta}(x,r)$ & ball of radius $r$ around $x$ in $\Ta$ \\
$\ell^a$ & local time at level $a$ in $\Ta$ \\
\end{longtable}

\subsection*{Stable Snakes}
\vspace{-1em}
\begin{longtable}{p{3.5cm}p{12cm}}
$(Z_v)_{v \in \Ta}$ & stable snake, indexed by $\Ta$ \\
$\W$ & set of continuous real-valued functions on interval of the form $[0,\zeta]$\\
$\zeta$ & lifetime of the snake\\
$w$, $\widehat{w}=w(\zeta_w)$ &  element of $\W$, and its tip\\
$(\xi_t)_t$ & standard linear Brownian motion\\
$\Pi_x$ & law of standard linear Brownian motion started at $x$\\
$Q_{w_0}^h$ & snake on forest with height function $h$, with initial condition $w_0$\\
$(\widehat{W}_t)_{t \in [0,1]}$ & $(W_t(\zeta_t))_{t \in [0,1]}$, the tip of the snake indexed by time \\
$\N_x$ and $\Noo_x$ & excursion measures for stable snakes\\
$(U^{(1)}, U^{(2)})$ & two-dimensional subordinator giving the law of the exploration and dual exploration processes along a branch to a uniform time point\\
 $(\Omega_0,\mathcal F_0, P^0)$ & probability space on which $(U^{(1)}, U^{(2)})$ is defined\\
$E^0$ & expectation associated to $(\Omega_0,\mathcal F_0, P^0)$ \\
$\tilde{\psi}$ & Laplace exponent of the marginals $U^{(1)}$ and $U^{(2)}$\\
$\psi'$ & Laplace exponent of the sum $U^{(1)}+U^{(2)}$\\
$\tilde{\pi}, \pi'$ & jump measures respectively associated to $\tilde{\psi}$ and $\psi'$\\
$(J_a, \hat{J}_a)$ & pair of random measures:  $(J_a, \hat{J}_a)(dt)=(\mathbbm{1}_{[0,a]}(t) dU_t^{(1)}, \mathbbm{1}_{[0,a]}(t) dU_t^{(2)})$\\
$\mathcal{R}$ & range of the stable snake \\
$\Pcal$ & Poisson point process with intensity $(J_h + \hat J_h)(da) \otimes \mathbb N_{\xi_a} (dw)$, describing the exploration and dual exploration of the tree and snake along the branch\\
$\mathbb{E}_{\Pcal}$ & expectation w.r.t Poisson point process $\Pcal$\\
$s^*$ & unique time where $W$ attains its minimum \\
\end{longtable}

\subsection*{Continuum Maps}
\vspace{-1em}
\begin{longtable}{p{3.5cm}p{12cm}}
$D^{\circ}_\alpha $ & pseudo-distance on $\Ta$, defined using $Z$ \\
$D^*_\alpha $ & distance on $\Ta$, satisfying $D^*_\alpha \leq D^{\circ}_\alpha $ \\
$\approx$ & equivalence relation on $\mathcal{T}_\alpha$ defined by $v \approx v' \iff D^*_\alpha(v,v')=0$ \\
$\left(\mathbf{m}_\alpha ,D^*_\alpha\right)$ & the quotient space $\T_\alpha / \approx$, equipped with distance $D^*_{\alpha}$ \\
$(\M,D)$ & subsequential scaling limit of $(V(\Mn),\dmn)$ \\
$\nu_{\alpha}$      &  volume measure on $\left(\mathbf{m}_\alpha ,D^*_\alpha\right)$ \\
$\nu$ & volume measure on $(\M,D)$ \\
$B(x,r)$ or $B_{\M}(x,r)$ & ball of radius $r$ around $x$ in $\M$ \\
\end{longtable}

\subsection*{Misc}
\vspace{-1em}
\begin{longtable}{p{3.5cm}p{12cm}}
$\R_+$      & $[0,+\infty)$\\
$d_{GH}(\cdot, \cdot)$, $d_{GHP}(\cdot, \cdot)$ &  Gromov-Hausdorff and Gromov-Hausdorff-Prokhorov distances\\
$M_f(\R_+)$ & set of finite measures on $\R_+$\\

\end{longtable}

\newpage

\addcontentsline{toc}{section}{References}
\printbibliography

\end{document}